\let\hat\widehat
\let\tilde\widetilde
\newtheorem{theorem}{Theorem}
\newtheorem{lemma}[theorem]{Lemma}
\newtheorem{corollary}[theorem]{Corollary}
\newtheorem{remark}[theorem]{Remark}
\newenvironment{proof}{{\bf Proof.}}{$\Box$}
\newenvironment{enum}{
\begin{enumerate}
  \setlength{\itemsep}{1pt}
  \setlength{\parskip}{0pt}
  \setlength{\parsep}{0pt}
}{\end{enumerate}}
\begin{document}

\begin{center}
{\bf\sf\Large Estimating Undirected Graphs}\\ \vspace{.1cm}
{\bf\sf\Large Under Weak Assumptions}\\ \vspace{.1cm}
{\bf Larry Wasserman, Mladen Kolar and Alessandro Rinaldo}\\  \vspace{.1cm}
{\bf Carnegie Mellon University and The University of Chicago}\\  \vspace{.1cm}
{\bf September 26 2013}
\end{center}

\begin{quote}
  We consider the problem of providing nonparametric confidence
  guarantees for undirected graphs under weak assumptions.  In
  particular, we do not assume sparsity, incoherence or Normality.  We
  allow the dimension $D$ to increase with the sample size $n$.
  First, we prove lower bounds that show that if we want accurate
  inferences with low assumptions then there are limitations on the
  dimension as a function of sample size.  When the dimension
  increases slowly with sample size, we show that methods based on
  Normal approximations and on the bootstrap lead to valid inferences
  and we provide Berry-Esseen bounds on the accuracy of the Normal
  approximation.  When the dimension is large relative to sample size,
  accurate inferences for graphs under low assumptions are not
  possible.  Instead we propose to estimate something less demanding
  than the entire partial correlation graph.  In particular, we
  consider: cluster graphs, restricted partial correlation graphs and
  correlation graphs.
\end{quote}

\section{Introduction}

There are many methods for estimating undirected graphs, such as the
glasso \citep{yuan2007model, friedman2007graphical} and sparse
parallel regression \citep{meinshausen2006high}.  While these methods
are very useful, they rely on strong assumptions, such as Normality,
sparsity and incoherence, and they do not come with confidence
guarantees.  Recently, some papers --- such as \cite{WLiu} and
\cite{Ren} --- have provided confidence guarantees.  Moreover, they
have eliminated the incoherence assumption.  But they still rely on
Normality, eigenvalue conditions and sparsity.

The purpose of this paper is to construct a nonparametric estimator
$\hat G$ of an undirected graph $G$ with confidence guarantees that
does not make these assumptions.  Our approach is very traditional;
when the dimension $D_n$ is less than the sample size $n$ (but
increasing with $n$) we simply use the bootstrap or the delta method
to get confidence intervals for the partial correlations.  We put an
edge between two nodes if zero is not in the confidence interval.
When $D_n$ is larger than $n$, we avoid sparsity and eigenvalue
conditions and instead, we again rely on a more traditional method,
namely, dimension reduction.  We provide explicit Berry-Esseen style
bounds for the delta method and the bootstrap.  Indeed, while the low
dimensional case and high dimensional case have received much
attention, the moderate dimensional case --- where $D_n$ increases
with $n$ but is less than $n$ --- has not received much attention
lately.  Examples of research for increasing but moderate dimensions
are \cite{portnoy1988asymptotic} and \cite{mammen1993bootstrap}.  Our
results are very much in the spirit of those papers.  However, our
emphasis is on finite sample Berry-Esseen style bounds.

The confidence guarantee we seek is
\begin{equation}\label{eq::goal1}
  P^n(\hat G \subset G) \geq 1-\alpha - O(r_n)
\end{equation}
where $n$ is the sample size, $P^n$ denotes the distribution for $n$
observations drawn from $P$ and $r_n$ is an explicit rate.  The
notation $\hat G \subset G$ means that the edges of $\hat G$ are a
subset of the edges of $G$.  This means that, with high probability,
there are no false edges.  Of course, one could use other error
measures such as false discovery rates, but we shall use the guarantee
given by (\ref{eq::goal1}).

We focus at first on partial correlation graphs: a missing edge means
that the corresponding partial correlation is 0.  We distinguish two
cases.  In the first case, $D_n$ can increase with $n$ but is smaller
than $n$.  In that case we show that Gaussian asymptotic methods and
bootstrap methods yield accurate confidence intervals for the partial
correlations which then yield confidence guarantees for the graph.
The accuracy of the coverage is $O( \log D_n/n^{1/8})$.  We also show
that, in principle, one can construct finite sample intervals, but
these intervals turn out to be too conservative to be useful.

In the second case, $D_n$ can be large, even larger than $n$.  In this
case it is not possible to get valid inferences for the whole graph
under weak assumptions.  We investigate several ways to handle this
case including: cluster graphs, restricted partial correlation graphs
and correlation graphs.

{\em Contributions.}
We provide
graph estimation methods with these properties:
\begin{enum}
\item The methods provides confidence guarantees.
\item The methods do not depend on Normality or other parametric assumptions.
\item The methods do not require sparsity or incoherence conditions.
\item The methods have valid coverage when the dimension increases with the sample size.
\item The methods are very simple and do not require any optimization.
\item In Section \ref{section::moderate} we develop new results for
the delta method and the bootstrap with increasing dimension.
\end{enum}

{\em Related Work.}  
Our approach is similar to the method in
\cite{WLiu}, later improved by \cite{Ren}.  He uses tests on partial
correlations to estimate an undirected graph.  His approach has two
advantages over other methods: it eliminates the need to choose a
tuning parameter (as in the glasso) and it provided error control for
the estimated graph.  However, the results in that paper assume
conditions like those in most papers on the lasso, namely, sparsity.
These conditions might be reasonable in some situations, but our goal
is to estimate the graph without invoking these assumptions.  In the
special case of fixed dimension, our method is the same as that in
\cite{drton2004model}.

\cite{schafer2005shrinkage}, building on work by
\cite{ledoit2004well}, consider a shrinkage approach to estimating
graphs.  They make no sparsity or incoherence assumptions.  Their
examples suggest that their approach can work well in high dimensions.
From our point of view, their method introduces a bias-validity
tradeoff: large shrinkage biases the partial correlations but have
valid asymptotics in high dimensions.  Low shrinkage has low bias but
compromises the validity of the asymptotics in high dimensions.
Shrinkage graphs are beyond the scope of this paper, however.

{\em Outline.}  
We start with some notation in Section
\ref{section::notation}.  We discuss various assumptions in Section
\ref{section::assumptions}.  We then establish lower bounds in Section
\ref{section::lowerbounds}.  Finite sample methods are presented in
Section \ref{section::method1}.  However, these do not work well in
practice.  Asymptotic methods for the moderate dimensional case are
considered in Section \ref{section::moderate}.  Specifically, we
develop a delta method and a bootstrap method that accommodate
increasing dimension.  Recent results on high dimensional random
vectors due to \cite{Cherno,Cherno2} play an important role in our
analysis.  Methods for the high-dimensional case are considered in
Section \ref{section::methods3}.  In Section
\ref{section::experiments} we give some numerical experiments and some
examples.  Concluding remarks are in Section
\ref{section::conclusion}.

\section{Notation}
\label{section::notation}

Let $Y_1,\ldots, Y_n \in \mathbb{R}^D$ be a random sample from a
distribution $P$.  Each $Y_i = (Y_{i}(1),\ldots, Y_{i}(D))^T$ is a
vector of length $D$.  We allow $D\equiv D_n$ to increase with $n$.
We do not assume that the $Y_i$'s are Gaussian.  If $A$ is a matrix,
we will sometimes let $A_{jk}$ denote the $(j,k)$ element of that
matrix.

Let $\Sigma\equiv \Sigma(P)$ denote the $D\times D$ covariance matrix
of $Y_i$ and let $\Omega = \Sigma^{-1}$.  Let $\Theta=\{\theta\}_{jk}$
be the matrix partial correlations:
\begin{equation}
  \theta_{jk} = - \frac{\Omega_{jk}}{\sqrt{\Omega_{jj}\Omega_{kk}}}.
\end{equation}
Let 
\begin{equation}
  S_n = \frac{1}{n}\sum_{i=1}^n (Y_i - \overline{Y})(Y_i - \overline{Y})^T 
\end{equation}
be the the sample covariance matrix and let $\hat\Theta_n$ be the
matrix of sample partial correlations.  Given a matrix of partial
correlations $\Theta$ let $G\equiv G(P)$ be the undirected graph with
$D$ nodes and such that there is an edge between nodes $j$ and $k$ if
and only if $\theta_{jk}\neq 0$.  Equivalently, there is an edge if
and only if $\Omega_{jk} \neq 0$.  In Section \ref{section::methods3}
we consider other graphs.

For any matrix $A$, let ${\rm vec}(A)$ denote the vector obtained by
stacking the columns of $A$.  We define the following quantities:
\begin{align}
\mu &= \mathbb{E}(Y),\ \  & \sigma &= {\rm vec}(\Sigma), \ \  & \omega &= {\rm vec}(\Omega)\\
s   &= {\rm vec}(S_n),\ \ & \delta &= \sqrt{n}(s-\sigma),\ \  & \Delta &= \sqrt{n}(\overline{Y}-\mu).
\end{align}

If $A$ is $m\times n$ then there is a unique permutation matrix
$K_{mn}$ -- called the commutation matrix -- such that
\begin{equation}\label{eq::commutation}
K_{mn} {\rm vec}(A) = {\rm vec} (A^T).
\end{equation}
Let $J$ denote a $D\times D$ matrix of one's.
For matrices $L$ and $U$ with the same dimensions,
we write
$L \leq U$ to mean that
$L_{jk} \leq U_{jk}$ for all $j,k$.
If $A$ is $m\times n$ and $B$ is $p\times q$ then
the Kronecker product $A\otimes B$ is the $mp\times nq$ matrix
\begin{equation}
\left[
\begin{array}{ccc}
A_{11} B & \cdots & A_{1n}B\\
\vdots   &        & \vdots\\
A_{m1}B  & \cdots & A_{mn}B
\end{array}
\right].
\end{equation}
The Frobenius norm of $A$ is
$||A||_F = \sqrt{\sum_{j,k} A_{jk}^2}$,
the operator norm is
$||A|| = \sup_{||x||=1} ||Ax||$
and the max norm is
$||A||_{\rm max} = \max_{j,k} |A_{jk}|$.
Let
$||A||_1 = \max_j \sum_{i=1}^D |A_{ij}|$
and
\begin{equation}
|\!|\!|A|\!|\!| = \sum_{jk} |A_{jk}|.
\end{equation}
We let $\Phi$ denote the cdf of a standard Normal random variable.
Recall that a random vector $X\in\mathbb{R}^k$ is {\em sub-Gaussian}
if there exists $\zeta>0$ such that, for all $t\in\mathbb{R}^k$,
\begin{equation}
  \mathbb{E} e^{t^T(X-\mu)} \leq e^{||t||^2\,\zeta^2/2}
\end{equation}
where $\mu=\mathbb{E}(X)$.  The smallest and largest eigenvalues of a
matrix $A$ are denoted by $\lambda_{\rm min}(A)$ and $\lambda_{\rm
  max}(A)$.  We write $a_n \preceq b_n$ to mean that there is some
$c>0$ such that $a_n \leq c b_n$ for all large $n$.  We often use $C$
to denote a generic positive constant.

\section{Assumptions}
\label{section::assumptions}

In this section we discuss the assumptions we make and we also discuss
some of the commonly used assumptions that we will not use.

\vspace{1cm}

{\bf The Assumptions.}
In the case where $D_n < n$ we make the following assumptions:

(A1) $Y$ and ${\rm vec}(Y Y^T)$ are sub-Gaussian.

(A2) $0 < a \leq \lambda_{\rm min}(\Sigma) \leq \lambda_{\rm max}(\Sigma) \leq A < \infty$.

(A3) $\lambda_{\rm min}(T) \geq c_0 > 0$ where $T$
is the asymptotic covariance of $\sqrt{n}(s-\sigma)$
and is given in Equation (\ref{eq::T}).
Also assume that
$\min_j \gamma_{jj}>0$
where $\gamma$,
the asymptotic variances of the sample partial correlations,
is given in (\ref{eq::Gamma}).

(A4) $\max_j \mathbb{E} |V_i(j)|^3 \leq C$
where
$V_i = {\rm vec}[(Y_i-\mu)(Y_i-\mu)^T] - \sigma$.

In the case where $D_n > n$ we do not make these assumptions.
Indeed, (A3) requires that $D_n < n$.
Instead, when $D_n > n$, we first perform a dimension reduction and then we assume
(A1)-(A4) on the reduced problem.  We remark that the sub-Gaussian
assumption is stronger than needed and is made for simplicity.

{\bf The Non-Assumptions.}
Now we discuss the assumptions that are commonly made
for this problem, but that we will not use.

(B1) {\em Normality.} $Y\sim N(\mu,\Sigma)$.

(B2) {\em Incoherence.} The incoherence condition is
\begin{equation}
||\Gamma_{S^c S} (\Gamma_{S S})^{-1}||_\infty < 1
\end{equation}
where
$\Gamma = \Sigma \otimes \Sigma$,
$S$ is the set of pairs with edges between them
and $||\cdot ||_\infty$ is the maximum absolute column sum.

(B3) {\em Sparsity.}  
The typical sparsity assumption is that the
maximum degree $d$ of the graph is $o(\sqrt{n})$.

(B4) {\em Eigenvalues.} 
$0 < a \leq \lambda_{\rm min}(\Sigma) \leq \lambda_{\rm max}(\Sigma) \leq A < \infty$.

(B5) {\em Donut.}
It is assumed that each partial correlation is either 0 or is strictly
larger than $\sqrt{\log D/n}$, 
thus forbidding a donut around the origin.

{\bf Discussion.}  
The above assumptions may be reasonable in certain
specialized cases.  However, for routine data-analysis, we regard
these assumptions with some skepticism when $D_n > n$.  They serve to
guarantee that many high-dimensional methods will work, but seem
unrealistic in practice.  Moreover, the assumptions are very fragile.
The incoherence assumption is especially troubling although \cite{Ren}
have been able to eliminate it.  The donut assumption ensures that
non-zero partial correlations will be detected with high probability.
The eigenvalue assumption (B4) is quite reasonable when $D_n < n$.
But when $D_n$ is much larger than $n$, (B4) together with (B3) are
very strong and may rule out many situations that occur in real data
analysis practice.  To the best of our knowledge, (B3) and (B4) are
not testable when $D_n > n$.  Our goal is to develop methods that
avoid these assumptions.  Of course, our results will also be weaker
which is the price we pay for giving up strong assumptions.  They are
weaker because we only are able to estimate the graph of a
dimension-reduced version of the original problem.

\section{Lower Bounds}
\label{section::lowerbounds}

Constructing a graph estimator for which (\ref{eq::goal1}) holds is
easy: simply set $\hat G$ to be identically equal to the empty graph.
Then $\hat G$ will never contain false edges.  But to have a useful
estimator we also want to have non-trivial power to detect edges;
equivalently, we want confidence intervals for the partial
correlations to have width that shrinks with increasing sample size.
In this section we find lower bounds on the width of any confidence
interval for partial correlations.  This reveals constraints on the
dimensions $D$ as a function of the sample size $n$.  Specifically, we
show (without sparsity) that one must have $D_n < n$ to get consistent
confidence intervals.  This is not surprising, but we could not find
explicit minimax lower bounds for estimating partial correlations so
we provide them here.

The problem of estimating a partial correlation is intimately related
to the problem of estimating regression coefficients.  Consider the
usual regression model
\begin{equation}\label{eq::regression}
Y = \beta_1 X_1 + \ldots + \beta_D X_D + \epsilon
\end{equation}
where $\epsilon \sim N(0,\sigma^2)$ and
where we take the intercept to be 0 for simplicity.
(Normality is assumed only in this section.)
Suppose we want a confidence interval for $\beta_1$.

We will need assumptions on the covariance matrix $\Sigma$ for
$X=(X_1,\ldots, X_d)$.  Again, since we are interested in the low
assumption case, we do not want to impose strong assumptions on
$\Sigma$.  In particular, we do not want to rule out the case where
the covariates are highly correlated.  We do, however, want $\Sigma$
to be invertible.  Let ${\cal S}$ denote all symmetric matrices and
let
\begin{equation}
{\cal S}(a,A) = \Bigl\{ \Sigma \in {\cal S}:\ 
a \leq \lambda_{\rm min}(\Sigma) \leq \lambda_{\rm max}(\Sigma) \leq A\Bigr\}
\end{equation}
where $0 < a \leq A < \infty$.  To summarize: $Y = \beta^T X +
\epsilon$ where $\epsilon \sim N(0,\sigma^2)$, and $\Sigma = {\rm
  Cov}(X)\in {\cal S}(a,A)$.  Let ${\cal P}$ be all such
distributions.

A set-valued function $C_n$
is a $1-\alpha$ confidence interval for $\beta_1$ if
\begin{equation}
P^n(\beta_1 \in C_n) \geq 1-\alpha
\end{equation}
for all
$P\in {\cal P}$.
Let ${\cal C}_n$ denote all $1-\alpha$ confidence intervals.
Let
\begin{equation}
W_n = \sup \{x:\ x\in C_n\} - \inf \{x:\ x\in C_n\}
\end{equation}
be the width of $C_n$.

\begin{theorem}\label{theorem::regression}
Assume that $D_n < n-D-1$ and that
$\alpha < 1/3$.
Then
\begin{equation}
\inf_{C_n \in {\cal C}_n}\sup_{P\in {\cal P}}
\mathbb{E}(W_n^2) \geq \frac{C}{n-D+1}
\end{equation}
for some $C>0$.
\end{theorem}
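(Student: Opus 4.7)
My plan is to use Le Cam's two-point testing method, converting the coverage requirement into a width lower bound via total-variation control between two product measures that differ only in the $\beta_1$ coordinate.

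\emph{Construction.} I will take two laws $P_0,P_1\in\mathcal{P}$ sharing the same $X$-marginal $X\sim N(0,\Sigma)$ and the same $\sigma^2=1$, with regression vectors $\beta^{(0)}=0$ and $\beta^{(1)}=we_1$ for a $w>0$ chosen below; $\Sigma\in\mathcal{S}(a,A)$ is fixed, for instance $\Sigma=\tfrac{a+A}{2}I$, which gives $\Sigma_{11}=\tfrac{a+A}{2}\leq A$. For any $C_n\in\mathcal{C}_n$ the coverage guarantee yields $P_0(0\in C_n)\geq 1-\alpha$ and $P_1(w\in C_n)\geq 1-\alpha$; and the event $\{0\in C_n\}\cap\{w\in C_n\}$ forces $W_n\geq w$. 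Combining these with the TV inequality $|P_1(B)-P_0(B)|\leq\mathrm{TV}(P_0^n,P_1^n)$ applied to $B=\{0\in C_n\}$ gives
\[
P_1(W_n\geq w)\;\geq\;P_1(0\in C_n)+P_1(w\in C_n)-1\;\geq\;1-2\alpha-\mathrm{TV}(P_0^n,P_1^n).
\]

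\emph{KL computation and Pinsker.} Because the $X$-marginals agree, the chain rule reduces the KL divergence to a conditional piece; direct Gaussian computation gives
\[
\mathrm{KL}(P_0^n\|P_1^n)\;=\;\tfrac{n}{2}\,\mathbb{E}[(wX_1)^2]\;=\;\tfrac{nw^2\Sigma_{11}}{2},
\]
so by Pinsker $\mathrm{TV}(P_0^n,P_1^n)\leq w\sqrt{n\Sigma_{11}}/2$. Choosing $w^2=4(1-3\alpha)^2/(n\Sigma_{11})$ forces $\mathrm{TV}\leq 1-3\alpha$ and hence $P_1(W_n\geq w)\geq\alpha>0$ by the display above. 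Markov then gives
\[
\sup_{P\in\mathcal{P}}\mathbb{E}(W_n^2)\;\geq\;\mathbb{E}_{P_1}(W_n^2)\;\geq\;w^2\cdot P_1(W_n\geq w)\;\geq\;\frac{4\alpha(1-3\alpha)^2}{n\Sigma_{11}}\;\gtrsim\;\frac{1}{n}.
\]
Under the hypothesis $D<n-D-1$ we have $n-D+1\geq(n+3)/2$, so $1/n$ and $1/(n-D+1)$ differ by at most a factor of $2$, delivering the claimed lower bound $C/(n-D+1)$.

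\emph{Main obstacle.} The chief subtlety is that the bare two-point argument naturally produces denominator $n$, not $n-D+1$. In the stated moderate-dimensional regime the two are interchangeable up to constants, so nothing further is required; but chasing the precise $n-D+1$ scaling for general $D$ would need either (i) a conditional-on-$X$ reduction exploiting the $\chi^2_{n-D+1}$ law of the OLS residual sum of squares together with the fact that $\hat\beta_1\mid X\sim N(\beta_1,\sigma^2(X^{T}X)^{-1}_{11})$ is minimax among Gaussian-location CIs, or (ii) a richer Le Cam construction minimizing $u^{T}\Sigma u$ over perturbations $u$ with $u_1=w$, yielding $w^2\sigma_{11\mid-1}$ with $\sigma_{11\mid-1}\geq a$. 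Because this latter minimum is still $\Theta(w^2)$, only the constants are affected, which is why the two-point bound is sufficient here.
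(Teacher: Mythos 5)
Your proof is correct for the theorem as stated, and it takes a genuinely different (and simpler) route than the paper's. You run an unconditional two-point Le Cam argument: both hypotheses share the same Gaussian design marginal, the separation $w\asymp 1/\sqrt{n\Sigma_{11}}$ is deterministic, Pinsker controls the total variation, and Markov turns $P_1(W_n\geq w)\geq\alpha$ into $\mathbb{E}(W_n^2)\gtrsim 1/n$; the hypothesis $D<n-D-1$ (i.e.\ $2D<n-1$) then makes $n-D+1$ and $n$ interchangeable up to a factor of $2$, which delivers the claimed $C/(n-D+1)$. The paper instead conditions on the design $X=x$, reduces to the Gaussian location model $Z=\beta+\xi$ with $\xi\sim N\bigl(0,(x^Tx)^{-1}\bigr)$, chooses a design-dependent separation $\delta^2=4\alpha^2 (x^Tx)^{-1}_{11}$, and exploits the fact that $1/(x^Tx)^{-1}_{11}$ is a scaled $\chi^2_{n-D+1}$ variable to show $(x^Tx)^{-1}_{11}\gtrsim 1/(n-D+1)$ with probability $1-1/n$. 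The practical difference is where the $n-D+1$ comes from: in your argument it is purely cosmetic, imported from the hypothesis $2D<n-1$ (your intrinsic bound is $1/n$), whereas the paper's conditional construction produces the $1/(n-D+1)$ scaling from the analysis itself and would continue to give the stronger rate if $D/n$ were allowed to approach $1$ --- a regime the stated hypothesis excludes, which is exactly why your shortcut is legitimate here. Your constant $4\alpha(1-3\alpha)^2$ degenerates as $\alpha\to 0$, but the paper's constant is also $\alpha$-dependent, so this is not a defect, merely worth noting.
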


\begin{proof}
Let us write the model in vectorized form:
\begin{equation}\label{eq::matrix}
Y = X \beta + \epsilon
\end{equation}
where
$Y=(Y_1,\ldots,Y_n)^T$,
$X$ is $n\times D$,
$\beta = (\beta_1,\ldots,\beta_D)^T$ and
$\epsilon = (\epsilon_1,\ldots,\epsilon_n)^T$.

Let $M = N(0,\Sigma)$
with $\lambda_{\rm min}(\Sigma) \geq a > 0$.
Let
$p_0(x,y) = p_0(y|x) m(x)$ and
$p_1(x,y) = p_1(y|x) m(x)$ 
where $p_0(y|x)$ and $p_1(y|x)$
will be specified later.
Now
\begin{align*}
\inf_{C_n\in {\cal C}_n}\sup_{P\in {\cal P}}\mathbb{E}(W_n) & \geq
\inf_{C_n\in {\cal C}_n}\max_{P\in {P_0,P_1}}\mathbb{E}(W_n)\\
&= \inf_{C_n\in {\cal C}_n}\max_{P\in {P_0,P_1}}\int \mathbb{E}(W_n|X=x)dM(x)\\
&= \inf_{C_n\in {\cal C}_n} \max_{j=0,1}\int R_j(x) dM(x)
\end{align*}
where
$R_j(x) = \mathbb{E}_j(W_n|X=x)$.
Let
$$
A = \Bigl\{x :\ R_0(x) > R_1(x)\Bigr\}.
$$
For any two real numbers $r_0,r_1$,
we have that
$\max\{r_0,r_1\} \geq (r_0+r_1)/2$.
Hence,
\begin{align*}
\int R_0(x)dM(x) &\vee \int R_1(x)dM(x) \geq
\int_{A} R_0(x)dM(x)   \vee \int_{A^c} R_1(x) dM(x) \\
&= \int_{A} [R_0(x)\vee R_1(x)] dM(x) \vee 
\int_{A^c} [R_0(x)\vee R_1(x)] dM(x)\\
& \geq
\frac{1}{2}
\left(\int_{A} [R_0(x)\vee R_1(x)] dM(x) +
\int_{A^c} [R_0(x)\vee R_1(x)] dM(x)\right)\\
&= \frac{1}{2}\int [R_0(x)\vee R_1(x)] dM(x).
\end{align*}
Hence,
\begin{align*}
\inf_{C_n\in {\cal C}_n}\sup_{P\in {\cal P}}\mathbb{E}(W_n) & \geq
\inf_{C_n}\frac{1}{2}\int [\mathbb{E}_0(W_n|X=x)\vee \mathbb{E}_1(W_n|X=x)] dM(x)\\
& \geq
\frac{1}{2}\int \inf_{C_n} \max_{P_0,P_1}\mathbb{E}_P(W_n|X=x) dM(x).
\end{align*}

Now we fix $X=x\in\mathbb{R}^{n\times D}$ and lower bound
$\inf_{C_n} \max_{P_0,P_1}\mathbb{E}_P(W_n|X=x)$.
Assume that $x^T x$ is invertible.
Consider Equation (\ref{eq::matrix})
where the matrix $X$ is taken as fixed.
Multiplying each term in the equation by
$(x^T x)^{-1} x^T$
we can rewrite the equation as
$$
Z = \beta + \xi
$$
where, given $X=x$, $\xi \sim N(0,(x^T x)^{-1})$.

Let $S=x^T x$,
$b>0$,
$\delta^2 = 4\alpha^2 S_{11}^{-1}$,
$\beta_0 = (0,b,\ldots, b)$ and
$\beta_1 = (\delta,b,\ldots, b)$
which now defines $P_0$ and $P_1$.
The (conditional) Kullback-Leibler
distance between $p_0(y|x)$ and $p_1(y|x)$ is
$$
\frac{1}{2}(\beta_1-\beta_0)^T (x^T x) (\beta_1-\beta_0) = 2\alpha^2 S_{11}^{-1} S_{11}.
$$
Note that, since $D < n-1$, $x^T x$ is invertible with probability one.
The conditional total variation distance is thus bounded above by
${\rm TV}(x) \equiv \alpha \sqrt{S_{11}^{-1} S_{11}}$.
Let
$A_0 = \{ 0\in C_n\}$ and
$A_1 = \{ \delta\in C_n\}$.
Note that
$A_0 \cap A_1$ implies that
$W_n^2 \geq \delta^2$.
So, given $X=x$,
\begin{align*}
P_0(W_n^2 \geq \delta^2|X=x) &\geq
P_0(A_0\cap A_1|X=x)\\
& =
P_0(A_0|X=x)+ P_0(A_1|X=x) - P_0(A_0\cup A_1|X=x)\\
& \geq
P_0(A_0|X=x)+ P_0(A_1|X=x) - 1\\
& \geq
P_0(A_0|X=x)+ P_1(A_1|X=x) - 1 - {\rm TV}(x).
\end{align*}
Note that
$\int {\rm TV}(x) dM(x) \leq \alpha \int \sqrt{S_{11}^{-1} S_{11}} dM(X)$.
Now 
$\int\sqrt{S_{11}^{-1} S_{11}} dM(x) \to 1$ as $n\to \infty$.
Thus, for large enough $n$,
$\int {\rm TV}(x) dM(x) \leq 2\alpha$.
Integrating over $dM(x)$ we have
\begin{align*}
P_0(W_n^2 \geq \delta^2) &\geq
P_0(A_0)+ P_1(A_1) - 1 - \int {\rm TV}(x) dM(x)\\
& \geq
[1-\alpha] + [1-\alpha] - 1 - 2\alpha = 1 -  4\alpha.
\end{align*}
Let
$E=\left\{ S^{-1}_{11} \geq \frac{C}{n-D+1}\right\}$
where $C$ is a small positive constant.
Then,
\begin{align*}
P_0(W_n^2 \geq \delta^2) &=
P_0\left(W_n^2 \geq 4\alpha^2 S^{-1}_{11}\right)\\
&= P_0\left(W_n^2 \geq 4\alpha^2 S^{-1}_{11},E\right) + P_0\left(W_n^2 \geq 4\alpha^2 S^{-1}_{11},E^c\right)\\
&\leq  P_0\left(W_n^2 \geq \frac{4C\alpha^2}{n-D+1}\right) + P_0\left(E^c\right).
\end{align*}
Recalling that $C$ is a small positive constant,
\begin{align*}
P_0(E^c) &= P_0\left( S^{-1}_{11} < \frac{C}{n-D+1}\right) =
P_0\left( \frac{1}{S^{-1}_{11}} > \frac{n-D+1}{C}\right)\\
&= P_0\left( \chi^2_{n-D+1} > \frac{n-D+1}{C}\right) < \frac{1}{n}.
\end{align*}
So
$$
P_0\left(W_n^2 \geq \frac{4C\alpha^2}{n-D+1}\right) \geq
P_0(W_n^2 \geq \delta^2)  - \frac{1}{n} \geq 1-4\alpha - \frac{1}{n}.
$$
By Markov's inequality,
$$
E_0(W_n^2)  \geq \left(1-4\alpha-\frac{1}{n}\right) \frac{4C \alpha^2}{n-D+1} \succeq \frac{1}{n-D+1}.
$$
\end{proof}

Now we establish the analogous upper bound.

\begin{theorem}\label{theorem::upper}
Assume that $D_n < n-D+1$ and that
$\alpha < 1/3$.
Then
\begin{equation}
\inf_{C_n \in {\cal C}_n}\sup_{P\in {\cal P}}
\mathbb{E}(W_n^2) \preceq \frac{C}{n-D+1}.
\end{equation}
\end{theorem}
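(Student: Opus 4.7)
The plan is to exhibit an explicit confidence interval whose expected squared width meets the stated bound, matching the lower bound of Theorem \ref{theorem::regression}. The natural candidate is the classical $t$-interval from Gaussian linear regression. Writing the model in the vectorized form (\ref{eq::matrix}) and using $\hat\beta = (X^TX)^{-1}X^TY$ together with $\hat\sigma^2 = \|Y - X\hat\beta\|^2/(n-D)$, I would take
$$
C_n = \hat\beta_1 \pm t_{n-D,\alpha/2}\,\hat\sigma\,\sqrt{(X^TX)^{-1}_{11}},
$$
where $t_{n-D,\alpha/2}$ is the $(1-\alpha/2)$ quantile of the $t_{n-D}$ distribution.

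Step one is coverage. Conditional on $X$ (full rank almost surely since $D<n$), $\hat\beta_1 \sim N(\beta_1,\sigma^2(X^TX)^{-1}_{11})$ and $(n-D)\hat\sigma^2/\sigma^2 \sim \chi^2_{n-D}$ independently of $\hat\beta_1$, so the pivot $(\hat\beta_1-\beta_1)/(\hat\sigma\sqrt{(X^TX)^{-1}_{11}})$ has an exact $t_{n-D}$ distribution conditionally on $X$, hence also unconditionally. Therefore $P^n(\beta_1\in C_n)=1-\alpha$ for every $P\in\mathcal{P}$, so $C_n\in\mathcal{C}_n$.

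Step two is the expected width. The squared width equals $W_n^2 = 4\,t_{n-D,\alpha/2}^2\,\hat\sigma^2\,(X^TX)^{-1}_{11}$. Since $\mathbb{E}[\hat\sigma^2\mid X]=\sigma^2$, conditioning on $X$ gives
$$
\mathbb{E}(W_n^2) = 4\,t_{n-D,\alpha/2}^2\,\sigma^2\,\mathbb{E}\!\left[(X^TX)^{-1}_{11}\right].
$$
Because $X$ has rows i.i.d.\ $N(0,\Sigma)$, we have $X^TX \sim W_D(n,\Sigma)$, and the inverse-Wishart mean formula gives $\mathbb{E}[(X^TX)^{-1}] = \Sigma^{-1}/(n-D-1)$ (valid when $n>D+1$, which follows from the hypothesis). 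Thus $\mathbb{E}[(X^TX)^{-1}_{11}] = (\Sigma^{-1})_{11}/(n-D-1) \leq \lambda_{\max}(\Sigma^{-1})/(n-D-1) \leq 1/(a(n-D-1))$ using $\Sigma\in\mathcal{S}(a,A)$. Finally $t_{n-D,\alpha/2}^2$ is bounded uniformly in $n-D\geq 1$ by a constant depending only on $\alpha$ (it is monotonically decreasing in $n-D$ toward $z_{\alpha/2}^2$). Combining everything,
$$
\sup_{P\in\mathcal{P}}\mathbb{E}(W_n^2) \;\leq\; \frac{4\,t_{n-D,\alpha/2}^2\,\sigma^2}{a\,(n-D-1)} \;\preceq\; \frac{1}{n-D+1},
$$
which is the claimed bound.

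The only real subtlety is what $\sigma^2$ is allowed to do under the supremum: as stated, $\mathcal{P}$ does not bound $\sigma^2$, so one has to read $\mathcal{P}$ as also fixing (or bounding) the noise variance, and let $C$ absorb it, in parallel with the lower bound (where the specific $\sigma$ used in the two-point prior is incorporated into the constant). Apart from that bookkeeping, the argument is essentially the Gaussian exact $t$-interval plus one Wishart moment identity; no concentration inequality or large-deviation step is needed, since conditioning on $X$ reduces the expected-width calculation to a closed-form computation.
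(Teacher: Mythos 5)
Your proposal is correct and follows the same basic strategy as the paper: exhibit an explicit OLS-based interval, condition on the Gaussian design, and exploit the Wishart law of $X^TX$ to control $(X^TX)^{-1}_{11}$ at scale $1/(n-D+1)$. The execution differs in two ways that are worth noting. The paper works with a known-variance normal-quantile interval and controls $(X^TX)^{-1}_{11}$ through the representation $\Sigma_{11}^{-1}/(X^TX)^{-1}_{11}\sim\chi^2_{n-D+1}$ plus a chi-square concentration bound, which yields only a high-probability statement about the width; strictly speaking this leaves a small step unfinished, since a bound on $W_n$ holding with probability $1-\alpha/2$ does not by itself bound $\mathbb{E}(W_n^2)$. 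Your route --- the exact $t$ pivot with estimated $\hat\sigma^2$ together with the inverse-Wishart mean identity $\mathbb{E}[(X^TX)^{-1}]=\Sigma^{-1}/(n-D-1)$ and the uniform bound on $t_{n-D,\alpha/2}$ --- computes $\mathbb{E}(W_n^2)$ in closed form, so it actually delivers the quantity the theorem bounds, without any concentration step, and also handles unknown noise variance, which is more faithful to the nonparametric spirit of the class $\mathcal{P}$. The small costs are that you need $n-D\geq 2$ for the inverse-Wishart mean to exist (harmless under the theorem's hypothesis, apart from the degenerate case $D=1$), and you share with the paper the bookkeeping caveat you already flagged: as written, $\mathcal{P}$ does not bound $\sigma^2$, so both your bound and the paper's carry a factor of $\sigma^2$ that must be absorbed into the constant by reading $\mathcal{P}$ as fixing or bounding the noise variance.
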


\begin{proof}
We derive a sharp $\ell_\infty$ bound on $\hat\beta - \beta$.
Consider the following model
\[
Y = X \beta + \epsilon
\]
where $Y \in \mathbb{R}^n$, $X \in \mathbb{R}^{n \times D}$ are jointly Gaussian. In particular, 
$x_i \sim N(0, \Sigma)$ and $\epsilon_i \sim N(0, \sigma_{\epsilon}^2)$.
The OLS estimator is
\begin{align*}
\hat \beta 
& = \beta + (X^T X)^{-1}X\epsilon  = \beta + Z.
\end{align*}
Since $Z|X \sim N(0, \sigma_{\epsilon}^2(X'X)^{-1})$, we have that 
\[
|Z_j| \leq \sqrt{\sigma_{\epsilon}^{-1} (X^T X)^{-1}_{jj} \log(2\alpha^{-1})}
\]
with probability $1-\alpha/2$, conditional on $X$. 
We have that $X^T X \sim W_D(\Sigma, n)$ and
\[
\frac{\Sigma_{jj}^{-1}}{(X'X)_{jj}^{-1}} \sim \chi^2_{n-D+1}.
\]
For $T \sim \chi_D^2$, we have
\[
P(|D^{-1}T - 1|\geq x )
 \leq \exp^{-\frac{3}{16}Dx^2}.
\]
Therefore, setting $x = \sqrt{\frac{16}{3}\frac{\log(2\alpha^{-1})}{n-D+1}}$,
\[
(X'X)_{jj}^{-1} \leq \frac{\Sigma_{jj}^{-1}}{(1-x)(n-D+1)}
\]
with probability $1-\alpha/2$.
Combining the results, we have that for $j \in [p]$, 
\begin{align*}
|Z_j| 
&\leq 
\sqrt{\frac{\sigma_{\epsilon}^{2} (1-x)^{-1}\Sigma_{jj}^{-1} \log(2\alpha^{-1})}{n-D+1}}
\end{align*}
with probability $1-\alpha/2$. The second inequality hold under the
assumption that $D = o(n)$.  Using the lower quantile we obtain a
$\alpha/2$ level lower bound.  This yields a confidence interval with
squared length of order $O(1/(n-D+1))$.
\end{proof}

Now consider estimating a partial correlation corresponding to a
covariance matrix $\Sigma$.

\begin{theorem}
Let $W\in \mathbb{R}^D$ where $W\sim N(0,\Sigma)$ with $\Sigma\in
{\cal S}_a$.  Let $\theta$ be the partial correlation between two
components of $W$, say, $W_{D}$ and $W_{D-1}$.  Let ${\cal C}_n$ be the
set of $1-\alpha$ confidence intervals for $\theta$.  Assume that
$D_n\leq n$ and that $\alpha < 1/4$.  Then
\begin{equation}
\inf_{C_n \in {\cal C}_n}\sup_{P\in {\cal P}}
\mathbb{E}(W_n^2) \geq \frac{C}{n-D+1}.
\end{equation}
\end{theorem}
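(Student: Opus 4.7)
The plan is to reduce to Theorem \ref{theorem::regression} by exploiting the identity that links partial correlations to regression coefficients. Write $W = (X^T, Y)^T$ with $Y = W_D$ and $X = (W_1,\ldots, W_{D-1})^T$. Then the population regression $Y = X^T\beta + \epsilon$ (with $\epsilon$ independent of $X$) has $\beta_j = -\Omega_{D,j}/\Omega_{DD}$, and a short computation shows
\[
\theta \;=\; \theta_{D,D-1} \;=\; \beta_{D-1}\cdot \sqrt{\Omega_{DD}/\Omega_{D-1,D-1}}.
\]
Under $\Sigma\in\mathcal{S}(a,A)$ one has $\Omega_{DD},\Omega_{D-1,D-1}\in[1/A,1/a]$, so the multiplicative factor relating $\beta_{D-1}$ and $\theta$ is pinned to $[\sqrt{a/A},\sqrt{A/a}]$. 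Thus the partial-correlation estimation problem and the regression-coefficient estimation problem are equivalent up to a bounded rescaling.

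Given this correspondence, I would mimic the two-point construction in the proof of Theorem \ref{theorem::regression}. Take two Gaussian distributions $P_0,P_1\in\mathcal{P}$ sharing the same marginal of $X$ and the same $\sigma_\epsilon^2$, but with the coordinate $\beta_{D-1}$ equal to $0$ and $\delta'=2\alpha\sqrt{S_{11}^{-1}}$ respectively. By the identity above, the induced partial correlations differ by $\delta = c(\Sigma)\,\delta'$ with $c(\Sigma)\in[\sqrt{a/A},\sqrt{A/a}]$, hence $\delta \asymp \delta'$. The conditional KL and total-variation computations in Theorem \ref{theorem::regression} depend only on $\beta_1-\beta_0$ and on the design $x^Tx$, so they carry over verbatim.

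The rest of the argument then follows the same template: the events $A_0=\{0\in C_n\}$ and $A_1=\{\delta\in C_n\}$ jointly force $W_n^2\geq \delta^2$; the TV bound transfers mass between $P_0$ and $P_1$; chi-squared tail bounds control $S_{11}^{-1}\asymp 1/(n-D+1)$; and Markov's inequality converts a high-probability bound on $W_n^2$ into an expectation bound of order $1/(n-D+1)$.

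The main obstacle is checking that $P_0$ and $P_1$ both remain inside $\mathcal{S}(a,A)$ while producing a partial-correlation gap of the required order. Perturbing $\beta_{D-1}$ nudges the joint covariance of $W$, and one must verify that for $\delta'$ small enough the eigenvalues of $\mathrm{Cov}_{P_j}(W)$ stay within $[a,A]$. I expect this to be a routine continuity argument starting from a fixed well-conditioned base $(\Sigma_X,\sigma_\epsilon^2)$, since $\delta'\to 0$ as $n\to\infty$; once it is verified, the lower bound $\mathbb{E}(W_n^2)\geq C/(n-D+1)$ follows with a constant depending only on $a$, $A$, and $\alpha$.
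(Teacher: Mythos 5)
Your reduction is essentially the paper's own proof: the paper likewise constructs two Gaussian laws sharing the marginal of $X=(W_1,\ldots,W_{D-1})$ that differ only in the coefficient of $W_{D-1}$ in the equation for $W_D$ (there via an explicit triangular SEM with $q\in\{0,\delta\}$), observes that this coefficient being $0$ or $\delta$ is exactly the partial correlation being zero or of order $\delta$, and then runs the two-point KL/TV, chi-squared tail, and Markov argument of Theorem \ref{theorem::regression} verbatim. Your explicit rescaling identity $\theta=\beta_{D-1}\sqrt{\Omega_{DD}/\Omega_{D-1,D-1}}$ and the check that both covariances remain in ${\cal S}(a,A)$ are details the paper leaves implicit, so the proposal is correct and takes the same route.
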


\begin{proof}
Let $b>0$ be a small positive constant.
Let $W=(W_1,\ldots, W_D)$ where
\begin{align*}
W_1 &= \epsilon_1\\
W_2 &= bW_1 + \epsilon_2\\
W_3 &= bW_2 + bW_1 + \epsilon_3\\
\vdots &= \vdots\\
W_D &= q W_{D-1} + b W_{D-2} + \cdots + b W_1 + \epsilon_D,
\end{align*}
$\epsilon_1,\ldots, \epsilon_D \sim N(0,1)$.
For $P_0$ take $q=0$
and for $P_1$ take $q = \delta$.
So, $P_0 = N(0,\Sigma_0)$ and
$P_1 = N(0,\Sigma_1)$, say.
Then $\Omega_1=\Sigma^{-1}_1$
corresponds to a complete graph while
$\Omega_0=\Sigma_0^{-1}$
has a missing edge.
See Figure \ref{fig::two-graphs}.
Let us write
$W=(Y,X)$ where
$Y=W_1$ and
$X = (W_2,\ldots,W_D)$.
We note that
the marginal distribution of
$X$ is the same under $P_0$ and $P_1$.
The conditional distribution of $Y$ given $X$ under $P_j$ can be written
$$
Y = \beta^T_j X + \epsilon
$$
where
$\beta_0 = (0,b,\ldots, b)$ and
$\beta_1 = (\delta,b,\ldots, b)$.
The rest of the proof follows
the proof of Theorem \ref{theorem::regression}.
\end{proof}

We conclude that without further assumptions
(namely sparsity plus incoherence)
we cannot make reliable inferences unless
$D < n$.

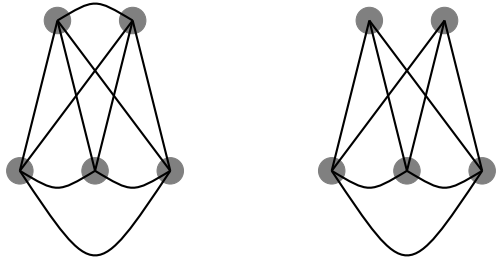
\begin{figure}

\begin{tabular}{ccc}
\begin{tikzpicture}
\filldraw [gray] (0,0) circle (5pt)
 (1,0) circle (5pt)
 (2,0) circle (5pt)
 (.5,2) circle (5pt)
 (1.5,2) circle (5pt);
\draw[thick] (.5,2) -- (0,0);
\draw[thick] (.5,2) -- (1,0);
\draw[thick] (.5,2) -- (2,0);

\draw[thick] (1.5,2) -- (0,0);
\draw[thick] (1.5,2) -- (1,0);
\draw[thick] (1.5,2) -- (2,0);

\draw[thick] (0,0) .. controls (1,-1.5) .. (2,0);
\draw[thick] (0,0) .. controls (.5,-.3) .. (1,0);
\draw[thick] (1,0) .. controls (1.5,-.3) .. (2,0);
\draw[thick] (.5,2) .. controls (1,2.3) .. (1.5,2);
\end{tikzpicture} & \ \ \ \ \ \ \ \ \ &
\begin{tikzpicture}
\filldraw [gray] (0,0) circle (5pt)
 (1,0) circle (5pt)
 (2,0) circle (5pt)
 (.5,2) circle (5pt)
 (1.5,2) circle (5pt);
\draw[thick] (.5,2) -- (0,0);
\draw[thick] (.5,2) -- (1,0);
\draw[thick] (.5,2) -- (2,0);

\draw[thick] (1.5,2) -- (0,0);
\draw[thick] (1.5,2) -- (1,0);
\draw[thick] (1.5,2) -- (2,0);

\draw[thick] (0,0) .. controls (1,-1.5) .. (2,0);
\draw[thick] (0,0) .. controls (.5,-.3) .. (1,0);
\draw[thick] (1,0) .. controls (1.5,-.3) .. (2,0);
\end{tikzpicture}
\end{tabular}
\caption{
The two graphs in the proof. Left: $\Omega_1$
corresponds to a dense graph. 
Right: $\Omega_0$ is the same 
as $\Omega_1$ except that 
an edge has been dropped.}
\label{fig::two-graphs}
\end{figure}

\begin{remark}
These lower bounds were computed under the assumption of Normality.
This is good enough to show the dependence on dimension.
However, this makes the minimax lower bound optimistic.
When we develop the methods,
we shall not assume Normality.
\end{remark}

\section{A Finite Sample Method}
\label{section::method1}

For completeness, we give here a finite sample confidence interval
that has length $O(\sqrt{D/n})$.  However, the intervals do not work
well in practice and we explore asymptotic methods in the following
section.  In this section we suppose that $|Y_{ij}|\leq B$ for some
finite constant $B$.  First we recall the following result from
\cite{vershynin2010introduction}.

\begin{theorem}[Vershynin 2010]
There exists
$c_\alpha$, depending only on $B$, such that
$$
P^n\left( ||S-\Sigma|| > c_\alpha \sqrt{\frac{D}{n}}\right) \leq \alpha.
$$
\end{theorem}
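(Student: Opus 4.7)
The approach is the standard epsilon-net/Bernstein recipe for sample covariance operator-norm concentration; the bounded-coordinate assumption is what will eventually give a constant depending only on $B$.

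\textbf{Step 1 (discretization).} Reduce the operator norm to a maximum over a finite set. Let $\mathcal{N}$ be a $1/4$-net of the unit sphere $S^{D-1}$; a standard volume argument gives $|\mathcal{N}|\le 9^D$. Since $S-\Sigma$ is symmetric, a routine approximation inequality yields
\begin{equation*}
\|S-\Sigma\| \;=\; \sup_{\|x\|=1} |x^T(S-\Sigma)x| \;\le\; 2\max_{x\in\mathcal{N}} |x^T(S-\Sigma)x|.
\end{equation*}

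\textbf{Step 2 (pointwise concentration).} Fix $x$ with $\|x\|=1$ and decompose
\begin{equation*}
x^T(S-\Sigma)x \;=\; \frac{1}{n}\sum_{i=1}^n\!\Bigl[(x^T(Y_i-\mu))^2 - x^T\Sigma x\Bigr] \;-\; \bigl(x^T(\overline{Y}-\mu)\bigr)^2 .
\end{equation*}
Boundedness $|Y_{ij}|\le B$ makes $x^T(Y_i-\mu)$ a bounded linear functional, which in turn makes $(x^T(Y_i-\mu))^2$ sub-exponential with a norm $K=K(B)$ that does not depend on $x$. Bernstein's inequality applied to the centered sum, together with an easy Hoeffding bound on the last term in $\overline{Y}-\mu$, gives
\begin{equation*}
\P\bigl(|x^T(S-\Sigma)x|>t\bigr) \;\le\; 2\exp\!\Bigl(-c\,n\,\min\!\bigl(t^2/K^2,\,t/K\bigr)\Bigr)
\end{equation*}
for an absolute constant $c>0$.

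\textbf{Step 3 (union bound).} Combining Steps 1--2 via a union bound over $\mathcal{N}$,
\begin{equation*}
\P\bigl(\|S-\Sigma\|>2t\bigr)\;\le\; 2\cdot 9^{D}\,\exp\!\Bigl(-c\,n\,\min(t^2/K^2,\,t/K)\Bigr).
\end{equation*}
In the moderate regime $D\le n$ considered in this section, the quadratic branch is active for the target rate $t=c_\alpha\sqrt{D/n}$; choosing $c_\alpha$ large enough (as a function of $B$ and $\alpha$) so that $cnt^2/K^2 \ge D\log 9 + \log(2/\alpha)$ forces the right-hand side below $\alpha$, which is the desired conclusion.

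\textbf{Main obstacle.} The delicate point is obtaining a sub-exponential norm for $(x^T(Y_i-\mu))^2$ that is uniform in $x$ on the sphere and depends only on $B$, rather than degrading with $D$. Componentwise boundedness does not automatically yield a dimension-free sub-Gaussian norm for the linear combination $x^T Y_i$, so one must either exploit additional structure (e.g.\ a uniform covariance bound, which is consistent with the paper's running assumption (A2)) or invoke Vershynin's Bernstein-type bound for matrices with bounded independent rows to close the loop. Once that uniform pointwise tail bound is in hand, Steps 1 and 3 are entirely routine.
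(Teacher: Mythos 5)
A preliminary remark: the paper gives no proof of this statement at all --- it is imported verbatim from \cite{vershynin2010introduction} and used as a black box in the next theorem --- so there is no internal argument to compare yours against; your sketch can only be judged on its own terms. On those terms, Steps 1 and 3 (the $1/4$-net with $|\mathcal{N}|\le 9^D$ and the union bound) are standard and fine, but the obstacle you flag at the end is not a technicality to be smoothed over later: it is the whole theorem, and as written Step 2 is false.

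From $|Y_{ij}|\le B$ alone, the only bound on a linear marginal that is uniform over the sphere is $|x^T(Y_i-\mu)|\le 2B\|x\|_1\le 2B\sqrt{D}$, and this order is attained: take $Y=\eta\,\epsilon\,B\mathbf{1}$ with $\eta$ Bernoulli$(1/2)$, $\epsilon$ an independent random sign, and $x=\mathbf{1}/\sqrt{D}$; then $(x^TY)^2=B^2D$ with probability $1/2$, so the sub-exponential norm of the centered summand is of order $B^2D$, not a constant $K(B)$. Feeding $K\asymp B^2D$ into Step 3 forces $t\gtrsim K\sqrt{D/n}\asymp B^2D^{3/2}/\sqrt{n}$; even if one adds the eigenvalue bound (A2) so that only the variance (not the sup) of $(x^TY)^2$ is controlled, the net-plus-Bernstein route still yields a rate of order $D/\sqrt{n}$, nowhere near $c_\alpha\sqrt{D/n}$. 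The two legitimate exits are exactly the ones you allude to but do not execute: (i) assume a direction-uniform sub-Gaussian norm for $Y$ (essentially assumption (A1) with a dimension-free $\zeta$), in which case your net argument goes through verbatim and gives the $\sqrt{D/n}$ rate, but with a constant depending on that norm rather than on $B$; or (ii) invoke the bounded-row covariance bounds Vershynin actually proves via non-commutative (matrix) Bernstein, which use only $\|Y_i\|_2\le B\sqrt{D}$ but then deliver a rate of order $\sqrt{\|\Sigma\|}\,B\sqrt{D\log(D/\alpha)/n}$, i.e.\ with an extra logarithmic factor and dependence on $\|\Sigma\|$. In short, the clean statement with $c_\alpha$ depending only on $B$ is not a consequence of coordinatewise boundedness, so the gap in your proposal is genuine and can only be closed by strengthening the hypothesis, not by sharpening the estimate.
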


\begin{theorem}
Let
\begin{equation}
\epsilon_n = \frac{c_\alpha}{\hat\lambda^2} \sqrt{\frac{D}{n}} 
\left(1 - \frac{c_\alpha}{\hat\lambda} \sqrt{\frac{D}{n}}\right)^{-1}
\end{equation}
where $\hat\lambda$ is the smallest eigenvalue of $S_n$.  Let
$\Delta_n = 2\epsilon_n/(1-\epsilon_n)$.  Then
\begin{equation}
\inf_{P\in {\cal P}}P^n( \underline{\Theta} \leq \Theta \leq \overline{\Theta}) \geq 1-\alpha
\end{equation}
where $\overline{\Theta} = \hat\Theta + \Delta_n J$ and
$\underline{\Theta} = \hat\Theta - \Delta_n J$ where we recall that
$J$ is a $D\times D$ matrix of one's.
\end{theorem}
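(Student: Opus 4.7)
The plan is to combine the quoted Vershynin operator-norm bound on $\|S_n-\Sigma\|$ with a matrix-inversion perturbation identity, and then to propagate the resulting entrywise perturbation of $\Omega$ into an entrywise perturbation of $\Theta$. Let $\mathcal{E} = \{\|S_n - \Sigma\| \leq c_\alpha\sqrt{D/n}\}$, so that $P^n(\mathcal{E}) \geq 1-\alpha$ by the quoted theorem. All subsequent bounds are derived on $\mathcal{E}$; this is what allows $\epsilon_n$ and $\Delta_n$ to be stated in terms of the data-dependent $\hat\lambda$ rather than the unknown $\lambda_{\min}(\Sigma)$.

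On $\mathcal{E}$, I would write $\Sigma = S_n[I + \hat\Omega(\Sigma - S_n)]$. Provided $\|\hat\Omega\|\,\|S_n - \Sigma\| = c_\alpha\sqrt{D/n}/\hat\lambda < 1$, which is what makes the quantity $\epsilon_n$ finite and positive, inversion yields the Neumann expansion $\Omega = [I + \hat\Omega(\Sigma - S_n)]^{-1}\hat\Omega$. Using $\|\hat\Omega\| = 1/\hat\lambda$, this gives
\begin{equation*}
\|\hat\Omega - \Omega\| \;\leq\; \frac{\|\hat\Omega\|^{2}\,\|S_n - \Sigma\|}{1 - \|\hat\Omega\|\,\|S_n - \Sigma\|} \;=\; \frac{(c_\alpha/\hat\lambda^{2})\sqrt{D/n}}{1 - (c_\alpha/\hat\lambda)\sqrt{D/n}} \;=\; \epsilon_n.
\end{equation*}
Since $\|A\|_{\rm max} \leq \|A\|$ for any symmetric matrix, this yields the entrywise bound $|\hat\Omega_{jk} - \Omega_{jk}| \leq \epsilon_n$ simultaneously for every pair $(j,k)$, diagonal and off-diagonal alike.

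The last step is to propagate the entrywise bound on $\Omega$ to one on $\Theta$ via $\theta_{jk} = -\Omega_{jk}/\sqrt{\Omega_{jj}\Omega_{kk}}$. Writing $a=\Omega_{jj}$, $b=\Omega_{kk}$, $c=\Omega_{jk}$ and their hatted analogues, I would use the algebraic identity
\begin{equation*}
\hat\theta_{jk} - \theta_{jk} \;=\; -\frac{\hat c - c}{\sqrt{\hat a\hat b}} \;+\; c \cdot \frac{\sqrt{\hat a\hat b} - \sqrt{ab}}{\sqrt{ab}\,\sqrt{\hat a\hat b}},
\end{equation*}
and then invoke the positive-definite Cauchy-Schwarz inequality $|c|\leq \sqrt{ab}$ to absorb the factor $c/\sqrt{ab}$, leaving
\begin{equation*}
|\hat\theta_{jk} - \theta_{jk}| \;\leq\; \frac{|\hat c - c| + |\sqrt{\hat a\hat b} - \sqrt{ab}|}{\sqrt{\hat a\hat b}}.
\end{equation*}
Plugging in $|\hat c - c|\leq \epsilon_n$ and expanding $\sqrt{\hat a\hat b}/\sqrt{ab}$ as a Neumann-style series using $|\hat a - a|,|\hat b - b|\leq \epsilon_n$ bounds the remaining ratio, yielding $|\hat\theta_{jk} - \theta_{jk}| \leq 2\epsilon_n/(1-\epsilon_n) = \Delta_n$ uniformly over $(j,k)$. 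Intersecting with $\mathcal{E}$ proves the advertised coverage.

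The main obstacle is the third step: the nonlinearity of $\Theta$ in $\Omega$ forces one to track perturbations of the off-diagonal entry and of the two diagonal normalizers simultaneously, and the factor $2$ in the numerator of $\Delta_n$ comes from these two sources while the denominator $(1-\epsilon_n)$ is the geometric-series remainder in the ratio of diagonal corrections. The Cauchy-Schwarz bound $|c|\leq\sqrt{ab}$ is what removes the apparent dependence on the magnitudes of the diagonal entries of $\Omega$, which is essential since the final statement refers only to the observable $\hat\lambda$. The Neumann expansion in step 2 is deliberately pivoted around $\hat\Omega$ rather than $\Omega$ for the same reason.
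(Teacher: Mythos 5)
Your first two steps coincide with the paper's own proof: the same Vershynin event, and your Neumann expansion pivoted at $\hat\Omega=S_n^{-1}$ is exactly the Horn--Johnson perturbation inequality the paper quotes, giving $\|\hat\Omega-\Omega\|_{\rm max}\le\epsilon_n$ on that event. The divergence is in the final propagation from $\Omega$ to $\Theta$: the paper outsources it to Lemma 3 of Harris and Drton, while you attempt to derive the bound $2\epsilon_n/(1-\epsilon_n)$ directly, and it is there that your argument has a genuine gap.

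After your (correct) reduction to
$|\hat\theta_{jk}-\theta_{jk}|\le\bigl(|\hat c-c|+|\sqrt{\hat a\hat b}-\sqrt{ab}|\bigr)/\sqrt{\hat a\hat b}$,
the only inputs you use are $|\hat a-a|,|\hat b-b|,|\hat c-c|\le\epsilon_n$ and Cauchy--Schwarz, and these do not imply the claimed bound: both terms are \emph{absolute} perturbations divided by $\sqrt{\hat a\hat b}$, so the conclusion needs a lower bound of order one on the diagonal entries $\Omega_{jj}$ (equivalently $\hat\Omega_{jj}$), which you never establish and which cannot follow from what you use, since the diagonal of a precision matrix can be arbitrarily small (large variances). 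Concretely, take $\epsilon_n=0.1$, $a=b=\hat a=\hat b=0.05$, $c=0$, $\hat c=0.05$: every inequality in your chain holds ($|\hat c-c|\le\epsilon_n$, $|c|\le\sqrt{ab}$, $|\hat c|\le\sqrt{\hat a\hat b}$), yet $|\hat\theta_{jk}-\theta_{jk}|=1$, far exceeding $2\epsilon_n/(1-\epsilon_n)\approx 0.22$. The missing ingredient is precisely the normalization hypothesis built into the cited Harris--Drton lemma, which concerns matrices whose diagonal entries are bounded below by one (inverses of correlation matrices). If you add that normalization --- e.g.\ work with standardized variables so that $\Sigma$ is a correlation matrix, whence $\Omega_{jj}\ge 1/\Sigma_{jj}=1$ and $\hat\Omega_{jj}\ge 1/(S_n)_{jj}=1$ --- then your two terms are each bounded by $\epsilon_n/(1-\epsilon_n)$ and the proof closes, following essentially the same route as the paper but with the lemma proved rather than cited. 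As written, however, the last step does not go through.
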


\begin{proof}
  By the previous result, $ ||S-\Sigma|| \leq c_\alpha
  \sqrt{\frac{D}{n}}$ with probability at least $1-\alpha$.  From
  \cite{horn1990matrix} page 381,
$$
||S^{-1}-\Sigma^{-1}||_{\rm max}  \leq
\frac{ ||S^{-1}||\ ||S^{-1} (\Sigma-S)||}{ 1-  ||S^{-1} (\Sigma-S)||}.
$$
Note that, with probability at least $1-\alpha$,
\begin{equation}\label{eq::abound}
||S^{-1} (\Sigma-S)|| \leq
||S^{-1}||\, ||\Sigma-S|| =
\frac{||\Sigma-S||}{\hat\lambda} \leq
\frac{c_\alpha}{\hat\lambda} \sqrt{\frac{D}{n}}.
\end{equation}
Also note that
$||S^{-1}|| \leq 1/\hat\lambda$.
We conclude that
$$
||S^{-1}-\Sigma^{-1}||_{\rm max}  \leq \epsilon_n.
$$
From Lemma 3 of
\cite{harris2012pc},
$||\hat\Theta - \Theta||_{\rm max} \leq \frac{2\delta}{1-\delta}$
where
$\delta = ||S^{-1}-\Sigma^{-1}||_{\rm max}.$
The result follows.
\end{proof}

Despite the apparent optimal rate, in practice the confidence
intervals are gigantic.  Instead, we turn to asymptotic methods.

\section{Increasing Dimension}
\label{section::moderate}

We call the case where $D_n$ is increasing with $n$ but smaller than
$n$, the moderate dimensional case.  Here we derive confidence sets
for the partial correlations in this case.  We deal with the
high-dimensional case $D_n > n$ in the next section.

Our goal is to show the accuracy of the delta method and the
bootstrap.  In particular, we develop new results on the delta method
for multiple non-linear statistics with increasing dimension.  The
state-of-the-art for delta method results are the papers by
\cite{Pinelis, chen2007normal} where, in particular, the former
applies to the multivariate case.  Rather than adapt those results, we
instead develop a slightly different approach that leverages recent
developments in high dimensional statistics.  This allows us to
develop a simultaneous delta method and bootstrap for multiple inference with
increasing dimension.  Throughout this section, we assume that $D_n <
n$.

\subsection{Preliminary Definitions and Results}

Recall that
$s = {\rm vec}(S)$, $\sigma = {\rm vec}(\sigma)$,
$\omega = {\rm vec}(\Omega)$,
$\theta = {\rm vec}(\Theta)$
and
$\delta = \sqrt{n}(s - \sigma)$.
Define the map $g_j$ by
$\theta_j = g_j(\sigma)$.
We can write
$\theta = G(\sigma)$
where
$G(\sigma) = (g_1(\sigma),\ldots, g_{D^2}(\sigma))^T$.
Note that
$G: \mathbb{R}^{D^2} \to \mathbb{R}^{D^2}$.

If $D$ is fixed, the
central limit theorem implies that
\begin{equation}
\sqrt{n}(s - \sigma) \rightsquigarrow  N(0,T)
\end{equation}
where
\begin{equation}\label{eq::T}
T  \equiv T(\sigma) = \mathbb{E}( \epsilon \epsilon^T \otimes \epsilon \epsilon^T )  - \sigma \sigma^T 
\end{equation}
and $\epsilon \sim N(0,\Sigma)$.
The finite sample variance matrix of
$\delta$ is given by
(\cite{boik2006second}),
\begin{equation}
T_n(\sigma) =
\frac{c_1}{n-1} (\mathbb{E}( \epsilon \epsilon^T \otimes \epsilon \epsilon^T )  - \sigma \sigma^T)+
\left(1 - \frac{D\left(1 - \frac{1}{n}\right)}{n-1}\right) (I_{D^2} - K_{(D,D)}) (\Sigma\otimes\Sigma)
\end{equation}
where $K_{(D,D)}$
is the commutation matrix defined in
(\ref{eq::commutation}) and
$c_1 = D\left(1 - \frac{1}{n}\right)$.

Let
$\tilde{S} = n^{-1}\sum_{i=1}^n (Y_i-\mu)(Y_i-\mu)^T$,
$\tilde{s} = {\rm vec}(\tilde S)$,
$Q = (\overline{Y}-\mu)(\overline{Y}-\mu)^T$ and
$q = {\rm vec}(Q)$.
Note that
\begin{equation}\label{eq::qsigma}
s-\sigma = \tilde{s} - \sigma - q = \overline{V}-q
\end{equation}
where
$\overline{V} = n^{-1} \sum_i V_i$
and $V_i = {\rm vec}( (Y_i-\mu)(Y_i-\mu)^T)-\sigma$.

\begin{lemma}\label{lemma::ballbound}
For all $\epsilon>0$ we have
the following inequalities:
\begin{align*}
P(||s-\sigma||_\infty > \epsilon) &\leq 2 D^2 e^{-n \zeta^2 \epsilon^2/2}\\
P(||s-\sigma|| > \epsilon) &\leq  2D^2 e^{-n \zeta^2 \epsilon^2/(2D^2)}\\
\mathbb{E} ||\delta||_\infty &\leq \zeta \sqrt{2 \log(2 D^2)}\\
P(||q||_\infty > \epsilon) & \leq   4 D^2 e^{-n \epsilon \zeta^2/2}.
\end{align*}
\end{lemma}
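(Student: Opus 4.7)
The plan is to prove the four inequalities by standard sub-Gaussian tail bounds, anchored by the exact identity $s - \sigma = \overline{V} - q$ from (\ref{eq::qsigma}). First, under (A1), each $V_i = {\rm vec}((Y_i-\mu)(Y_i-\mu)^T) - \sigma$ is an affine function of the sub-Gaussian vectors $Y_i$ and ${\rm vec}(Y_iY_i^T)$, hence the coordinates of $V_i$ are mean-zero sub-Gaussian with parameter of order $\zeta$. I would record this once and use it throughout.

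For (4), I expand $q_{jk} = (\overline{Y}_j - \mu_j)(\overline{Y}_k - \mu_k)$. Sub-Gaussianity of $Y$ gives $P(|\overline{Y}_j - \mu_j| > \sqrt{\epsilon}) \leq 2\exp(-n\epsilon\zeta^2/2)$. The event $\{|q_{jk}| > \epsilon\}$ forces at least one of the two factors to exceed $\sqrt{\epsilon}$, so a union bound over the two factors and then over the $D^2$ entries yields the claimed $4D^2 \exp(-n\epsilon\zeta^2/2)$. For (1), each coordinate of $\overline{V}$ is an average of $n$ i.i.d.\ mean-zero sub-Gaussians, so scalar Hoeffding gives $P(|\overline{V}_j| > \epsilon) \leq 2\exp(-n\zeta^2\epsilon^2/2)$; a union bound over the $D^2$ coordinates of $\overline{V}$ combined with (4) to absorb the $-q$ term on the appropriate event gives the stated bound on $\|s-\sigma\|_\infty$.

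For (2), I use the elementary fact that $\|x\|_2 \leq D\|x\|_\infty$ for $x \in \mathbb{R}^{D^2}$, so $\{\|s-\sigma\| > \epsilon\} \subset \{\|s-\sigma\|_\infty > \epsilon/D\}$, and then apply (1) with $\epsilon$ replaced by $\epsilon/D$, which produces exactly the factor $e^{-n\zeta^2\epsilon^2/(2D^2)}$. For (3), I invoke the standard maximal inequality: if $X_1,\ldots,X_m$ are each sub-Gaussian with parameter $\zeta$ (not necessarily independent), then $\mathbb{E}\max_j |X_j| \leq \zeta\sqrt{2\log(2m)}$, proved via Jensen's inequality applied to $\log\sum_j \cosh(\lambda X_j)$ and optimizing in $\lambda$. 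Applied with $m = D^2$ to the coordinates of $\delta = \sqrt{n}(s-\sigma)$, each of which is sub-Gaussian with parameter $\zeta$, this yields the claimed bound.

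The main obstacle is mostly bookkeeping: verifying that the sub-Gaussian parameter propagates cleanly through (i) the affine reduction from $(Y, {\rm vec}(YY^T))$ to $V$, (ii) the scaling by $1/\sqrt{n}$ in passing from $V_i$ to $\delta$, and (iii) the subtraction of $q$ in (1) so that the announced $\exp(-n\zeta^2\epsilon^2/2)$ rate survives with only the stated constant $2D^2$ on the front. There is no conceptual difficulty beyond keeping the sub-Gaussian constants consistent.
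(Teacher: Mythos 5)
Your proposal follows essentially the same route as the paper: coordinatewise sub-Gaussian tail bounds plus union bounds over the $D^2$ entries for the first and fourth inequalities, the norm comparison $\|x\|\leq D\|x\|_\infty$ on $\mathbb{R}^{D^2}$ for the second, and the standard sub-Gaussian maximal inequality (the paper cites Devroye--Lugosi Lemma 2.2) for the third. The only difference is that you explicitly propose absorbing the $-q$ term in the first bound, whereas the paper simply identifies $\|s-\sigma\|_\infty$ with $\|\overline{V}\|_\infty$; carrying out that absorption rigorously would slightly perturb the stated constants (e.g.\ by splitting $\epsilon$), but this is a bookkeeping point that does not change the approach or how the lemma is used.
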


\begin{proof}
Using the sub-Gaussian property, we have
\begin{align*}
P( ||s-\sigma||_\infty > \epsilon) &= P( ||\overline{V}||_\infty > \epsilon) \leq 
\sum_j P( |\overline{V}_j| > \epsilon)
 \leq 2\sum_j e^{-n \zeta^2 \epsilon^2/2} = 2D^2 e^{-n \zeta^2 \epsilon^2/2}.
\end{align*}
The second result follows from the first since $||s-\sigma|| \leq D
||s-\sigma||_\infty$.  The third inequality follows from a standard
inequality; see Lemma 2.2 of Devroye and Lugosi (2001) for example.
For the fourth inequality, note that the absolute value $|q_j|$ of
each element of $q$ has the form $|\overline{Y}(s) - \mu(s)|\
|\overline{Y}(t) - \mu(t)|$.  So $P(||q||_\infty > \epsilon) \leq
\sum_j P( |q_j| > \epsilon) \leq 4 D^2 e^{-n \epsilon \zeta^2/2}.$
\end{proof}

\begin{lemma}\label{lemma::simple}
Let $Z\sim N(0,1)$.
Then, for every $\epsilon>0$,
$$
\sup_z | P(A_n + B_n < z) - \Phi(z)| \leq
\sup_z | P(A_n < z) - \Phi(z)| + \epsilon + P(|B_n| > \epsilon).
$$
\end{lemma}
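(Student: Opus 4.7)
The plan is a standard two-sided sandwich argument, splitting on whether the perturbation $B_n$ is small or large, followed by an application of the Lipschitz continuity of $\Phi$.

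First I would write, for any $z$,
\[
P(A_n + B_n < z) \leq P(A_n + B_n < z,\ |B_n| \leq \epsilon) + P(|B_n| > \epsilon) \leq P(A_n < z + \epsilon) + P(|B_n| > \epsilon),
\]
since on $\{|B_n| \leq \epsilon\}$ we have $A_n \leq A_n + B_n + \epsilon$. In the other direction, the identity $\{A_n < z - \epsilon\} \cap \{|B_n| \leq \epsilon\} \subseteq \{A_n + B_n < z\}$ gives
\[
P(A_n + B_n < z) \geq P(A_n < z - \epsilon) - P(|B_n| > \epsilon).
\]

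Next I would add and subtract $\Phi(z \pm \epsilon)$ on the right-hand sides and rearrange. This introduces two terms of the form $P(A_n < z') - \Phi(z')$, each bounded in absolute value by $\sup_z |P(A_n < z) - \Phi(z)|$, along with an anti-concentration term $|\Phi(z \pm \epsilon) - \Phi(z)|$. Since $\Phi$ has density $\phi$ bounded by $1/\sqrt{2\pi} < 1$, we get $|\Phi(z+\epsilon) - \Phi(z)| \leq \epsilon$ and likewise for $z - \epsilon$.

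Combining, for every $z$,
\[
-\sup_z |P(A_n < z) - \Phi(z)| - \epsilon - P(|B_n| > \epsilon) \leq P(A_n + B_n < z) - \Phi(z) \leq \sup_z |P(A_n < z) - \Phi(z)| + \epsilon + P(|B_n| > \epsilon).
\]
Taking the supremum over $z$ on the left-hand side yields the claim. There is no real obstacle here; the only thing worth a sentence is the use of $\sup_x \phi(x) \leq 1$ to replace $\epsilon/\sqrt{2\pi}$ by the cleaner constant $\epsilon$ that appears in the statement.
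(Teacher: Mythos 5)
Your proof is correct and follows essentially the same route as the paper: split on the event $\{|B_n|\leq\epsilon\}$, shift the argument of $A_n$ by $\pm\epsilon$, add and subtract $\Phi(z\pm\epsilon)$, and use the boundedness of the Normal density to absorb $|\Phi(z\pm\epsilon)-\Phi(z)|$ into the $\epsilon$ term. The only cosmetic difference is that you carry out both directions of the sandwich explicitly, whereas the paper proves the upper bound and appeals to ``a similar argument'' for the lower one.
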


\begin{proof}
Let
$E = \Bigl\{ |B_n| < \epsilon\Bigr\}$.
Then
\begin{align*}
P(A_n + B_n < z) - \Phi(z) &=
P(A_n + B_n < z,E) + P(A_n + B_n < z,E^c) - \Phi(z)\\
& \leq P(A_n  < z+\epsilon) + P(E^c) - \Phi(z)\\
& \leq P(A_n  < z+\epsilon) - \Phi(z+\epsilon) - \Phi(z) + \Phi(z+\epsilon) + P(|B_n| > \epsilon)\\
& \leq P(A_n  < z+\epsilon) - \Phi(z+\epsilon) + \epsilon + P(|B_n| > \epsilon).
\end{align*}
Hence,
\begin{align*}
\sup_z[P(A_n + B_n < z) - \Phi(z)] &\leq
\sup_z[P(A_n  < z+\epsilon) - \Phi(z+\epsilon)] + \epsilon + P(|B_n| > \epsilon)\\
&=
\sup_z[P(A_n  < z) - \Phi(z)] + \epsilon + P(|B_n| > \epsilon).
\end{align*}
By a similar argument,
$$
\sup_z[P(A_n + B_n < z) - \Phi(z)] \geq
\sup_z[P(A_n  < z) - \Phi(z)] -  \epsilon - P(|B_n| > \epsilon).
$$
\end{proof}

We need the following recent results on high-dimensional
random vectors.

\begin{theorem}[High-Dimensional CLT; Chernozhukov, Chetverikov and Kato 2012]
\label{theorem::clt}
Let $Y_1,\ldots, Y_n\in \mathbb{R}^k$
be random vectors with mean $\mu$ and 
covariance $\Sigma$.
Let
$$
T = \max_j \Biggl|\frac{1}{\sqrt{n}}\sum_{i=1}^n (Y_i(j) - \mu(j)) \Biggr|.
$$
Let $Z\in\mathbb{R}^D$ be Gaussian with mean 0 and
covariance $\Sigma$.
Then
\begin{equation}
\sup_z \Biggl| P( T\leq z) - P(\max_j |Z_j| \leq z)\Biggr| \preceq M \frac{ (\log D)^{7/8}}{n^{1/8}}
\end{equation}
where
$M = \left(E \max_j [|Y(j)| + |Z(j)|]^3\right)^{1/4}$.
Under the sub-Gaussian assumption, $M \preceq (\log D)^{1/8}$.
Hence the upper bound is
$\log D/n^{1/8}$.
\end{theorem}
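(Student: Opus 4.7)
This is a celebrated result cited from Chernozhukov, Chetverikov and Kato; the plan below is how I would attempt to reprove it rather than what the authors will do in the paper.

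The overall strategy is a Lindeberg--Stein interpolation combined with a smooth maximum approximation and an anti-concentration inequality for Gaussian maxima. The first step is to reduce the Kolmogorov distance between $T$ and $\tilde{T}:=\max_j |Z_j|$ to a comparison of expectations of smooth test functions. For a parameter $\beta>0$ I would replace the $\max$ by the log-sum-exp surrogate
\begin{equation*}
F_\beta(x) = \beta^{-1}\log\Bigl(\sum_{j=1}^{2k} e^{\beta x_j}\Bigr),
\end{equation*}
applied to the $2k$-vector $(Y_i(1)-\mu(1),-(Y_i(1)-\mu(1)),\ldots)$, which satisfies $0\le F_\beta(x)-\max_j x_j\le \beta^{-1}\log(2k)$. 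Then I would replace the indicator $\mathbf{1}\{\cdot\le z\}$ by a smooth, nonincreasing function $g_\eta$ that equals $1$ on $(-\infty,z]$, vanishes on $[z+\eta,\infty)$ and has bounded derivatives of orders up to three. Writing $\phi=g_\eta\circ F_\beta$, the function $\phi$ has derivatives whose $L^\infty$ norms scale like $\beta^{r-1}\eta^{-1}$ (times a polylogarithmic factor), which is exactly the bookkeeping that keeps the dimension dependence polynomial in $\log D$.

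The second step is the Lindeberg/Stein swap. Write $T_n = n^{-1/2}\sum_i X_i$ with $X_i = Y_i - \mu$ and introduce independent Gaussian copies $Z_1,\ldots,Z_n$ with the same covariance. Interpolate by swapping $X_i$'s for $Z_i$'s one at a time; a third-order Taylor expansion of $\phi$ evaluated at the partial sums gives
\begin{equation*}
\bigl|\mathbb{E}\phi(T_n)-\mathbb{E}\phi(\tilde T_n)\bigr|
\;\lesssim\;
\frac{1}{n^{3/2}}\sum_{i=1}^n \|\phi'''\|_\infty\,\mathbb{E}\bigl[\|X_i\|_\infty^3+\|Z_i\|_\infty^3\bigr],
\end{equation*}
where the first- and second-order terms cancel because the first two moments match. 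Using $\|\phi'''\|_\infty \lesssim (\beta^2+\beta/\eta+1/\eta^3)\log^2 D$ and the sub-Gaussian moment bound $\mathbb{E}\|X_i\|_\infty^3 \lesssim (\log D)^{3/2}$, this contributes a term of order $n^{-1/2}(\log D)^{c}(\beta^2+\beta\eta^{-1}+\eta^{-3})$.

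The third step is to pass from smooth-function comparison back to Kolmogorov distance. On the smoothing side I use $|\mathbb{E}g_\eta(T) - P(T\le z)|\le P(|T-z|\le \eta)$ (and similarly for $\tilde T$), and the log-sum-exp gap contributes another $\beta^{-1}\log(2k)$ shift. The key input here is the Chernozhukov--Chetverikov--Kato anti-concentration inequality $P(|\tilde T-z|\le \eta)\lesssim \eta\sqrt{\log D}$, valid under the eigenvalue lower bound on $\mathrm{Cov}(Z)$ that is built into $T$ because the coordinates have bounded variances. This is the step I expect to be the main obstacle: without a dimension-free anti-concentration bound for Gaussian maxima the whole scheme loses, so one must invoke (or re-derive) this delicate estimate.

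Finally I would optimize the free parameters. Choosing $\eta$ and $\beta$ to balance the Lindeberg error $n^{-1/2}(\log D)^c\eta^{-3}$ against the smoothing errors $\eta\sqrt{\log D}$ and $\beta^{-1}\log D$ gives $\eta \asymp n^{-1/8}(\log D)^{a}$ and $\beta\asymp \eta^{-1}$, producing the advertised rate $(\log D)^{7/8}/n^{1/8}$. The sub-Gaussian control $M\preceq (\log D)^{1/8}$ follows immediately from the uniform tail bound $\mathbb{E}\max_j |Y(j)|^3 \lesssim (\log D)^{3/2}$. The whole argument is robust to the precise form of the smooth surrogate; what is delicate is the joint control of the derivatives of $F_\beta$ and the Gaussian anti-concentration.
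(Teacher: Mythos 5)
The paper does not prove this result at all: Theorem \ref{theorem::clt} is imported verbatim from Chernozhukov, Chetverikov and Kato and is used as a black box in the proofs of Theorem \ref{theorem::delta} and the bootstrap theorems, so there is no internal proof to compare against. Your sketch is essentially the original CCK argument --- smooth (log-sum-exp) approximation of the maximum, a Lindeberg/Slepian-type interpolation with the first two Taylor terms cancelling by moment matching, uniform control of the summed third derivatives of $g_\eta\circ F_\beta$ of order $\beta^{2}+\beta\eta^{-1}+\eta^{-3}$, Gaussian anti-concentration, and balancing to get $\eta\asymp n^{-1/8}$ --- so as a reconstruction of the cited proof it is on target (CCK interpolate along a continuous Slepian path rather than swapping one summand at a time, but that is immaterial). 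Two points deserve care. First, you write that you would bound both $P(|T-z|\le\eta)$ and $P(|\tilde T-z|\le\eta)$ by anti-concentration, but anti-concentration is only available for the Gaussian maximum $\tilde T$; the standard fix is the sandwich argument in which the smoothed comparison itself transfers the Gaussian anti-concentration bound to $T$ (bound $P(T\le z)$ above by $\mathbb{E}g_\eta(F_\beta)$, compare to the Gaussian expectation, then absorb the $\eta+\beta^{-1}\log(2k)$ shift using anti-concentration for $\tilde T$ only, and argue symmetrically for the lower bound), so no anti-concentration for $T$ is ever needed. Second, your closing claim that $\mathbb{E}\max_j|Y(j)|^{3}\lesssim(\log D)^{3/2}$ yields $M\preceq(\log D)^{1/8}$ is not arithmetically consistent, since $((\log D)^{3/2})^{1/4}=(\log D)^{3/8}$; this slip mirrors the statement in the paper itself, but if one tracks it honestly the final bound becomes $(\log D)^{5/4}/n^{1/8}$ rather than $\log D/n^{1/8}$, which is harmless for the way the bound is used downstream but should be stated accurately.
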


\begin{theorem}[Gaussian Anti-Concentration; Chernozhukov, Chetverikov and Kato 2013]
Let $Z_1,\ldots, Z_k$
be centered, not necessarily independent,
Gaussian random variables.
Then
\begin{equation}
\sup_z P\Biggl( |\max_j Z_j -z| \leq \epsilon\Biggr) \leq C \epsilon \sqrt{\log (k/\epsilon)}
\end{equation}
where $C$ depends only on
$\max_j {\rm Var}(Z_j)$ and
$\min_j {\rm Var}(Z_j)$.
\end{theorem}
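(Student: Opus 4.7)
The plan is to reduce this anti-concentration claim to a difference of Gaussian orthant probabilities and then control that difference by Nazarov-type interpolation, with a smooth-max refinement to recover the $\sqrt{\log(1/\epsilon)}$ term. First, using $\{\max_j Z_j \leq t\} = \bigcap_j \{Z_j \leq t\}$ I would rewrite
\[
P(|\max_j Z_j - z| \leq \epsilon) = P(Z \leq (z+\epsilon)\mathbf{1}) - P(Z < (z-\epsilon)\mathbf{1}),
\]
so the task becomes bounding the Gaussian mass of the region between two nested orthants whose apex is shifted by $2\epsilon\mathbf{1}$.

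The main workhorse is Nazarov's inequality: for centered Gaussian $Z$ with $\underline{\sigma}^2 := \min_j \mathrm{Var}(Z_j) > 0$, and for any $y \in \mathbb{R}^k$ and $a > 0$,
\[
P(Z \leq y + a\mathbf{1}) - P(Z \leq y) \leq \frac{C\, a\, \sqrt{\log k}}{\underline{\sigma}}.
\]
I would prove this by parameterizing the shift as $t\mapsto y+t\mathbf{1}$ for $t\in [0,a]$, differentiating the Gaussian CDF along this path, and bounding the resulting surface integral through the standard tail estimate $\E[\max_j(Z_j/\sigma_j)] \preceq \sqrt{\log k}$. Specializing to $y = (z-\epsilon)\mathbf{1}$ and $a = 2\epsilon$ already gives a bound of order $\epsilon \sqrt{\log k}/\underline{\sigma}$, which matches the theorem whenever $\epsilon \gtrsim 1/k$.

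To sharpen to $\sqrt{\log(k/\epsilon)}$ in the small-$\epsilon$ regime, I would introduce the smooth maximum $F_\beta(z) := \beta^{-1}\log\sum_j e^{\beta z_j}$, which is convex, $C^\infty$, and sandwiches the max as $\max_j z_j \leq F_\beta(z) \leq \max_j z_j + \beta^{-1}\log k$. This gives the inclusion
\[
\{|\max_j Z_j - z|\leq\epsilon\} \subseteq \{|F_\beta(Z) - z| \leq \epsilon + \beta^{-1}\log k\},
\]
and since the gradient $\nabla F_\beta$ is a probability vector on the coordinates, the density of $F_\beta(Z)$ on $\mathbb{R}$ can be bounded by combining Stein's identity with Gaussian tail control. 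Choosing $\beta$ of order $\epsilon^{-1}\log(k/\epsilon)$ balances the smoothing bias against the density bound and produces the claimed rate.

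The step I expect to be hardest is the quantitative density bound for $F_\beta(Z)$: as $\beta\to\infty$ the gradient $\nabla F_\beta$ concentrates on a single coordinate, so I would need careful bookkeeping to ensure the final constant depends only on $\underline{\sigma}$ and $\bar{\sigma}:=\max_j \mathrm{Var}(Z_j)^{1/2}$ and remains uniform in $z$ and $\epsilon$. If this becomes unwieldy, an alternative I would fall back on is Nazarov's original perimeter inequality for convex sets applied directly to the boundary of the orthant $\{Z \leq (z+\epsilon)\mathbf{1}\}$, bounding the relevant Gaussian surface area intrinsically and avoiding the smooth surrogate altogether.
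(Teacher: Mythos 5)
First, note what you are being compared against: the paper never proves this theorem --- it is imported verbatim from Chernozhukov, Chetverikov and Kato (2013), and the text only derives the subsequent corollary (Lemma \ref{lemma::anticoncentration}) from it. Measured against CCK's actual argument, your route is genuinely different and, in its first half, complete and valid: writing $P(|\max_j Z_j - z|\leq \epsilon) = P\bigl(Z \leq (z+\epsilon)\mathbf{1}\bigr) - P\bigl(Z < (z-\epsilon)\mathbf{1}\bigr)$ and applying Nazarov's inequality with $a = 2\epsilon$ gives $C\epsilon\sqrt{\log k}/\underline{\sigma}$, and since $\log(k/\epsilon) \geq \log k$ for $\epsilon \leq 1$ this already implies the stated bound (for $\epsilon > 1$ the stated bound is degenerate and the claim is only meant for small $\epsilon$ anyway). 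This is essentially how the later CCK papers obtain anti-concentration over rectangles, and it is arguably cleaner than the original proof, which instead bounds the concentration function of $\max_j Z_j$ directly and yields $C\epsilon\bigl(\mathbb{E}[\max_j Z_j/\sigma_j] + \sqrt{1 \vee \log(\underline{\sigma}/\epsilon)}\bigr)$; the paper's $\sqrt{\log(k/\epsilon)}$ is just the crude consequence of bounding $\mathbb{E}[\max_j Z_j/\sigma_j] \leq \sqrt{2\log k}$. What the CCK form buys is a bound in terms of $\mathbb{E}[\max_j Z_j/\sigma_j]$, which can be far smaller than $\sqrt{\log k}$ under strong correlation --- that, not a small-$\epsilon$ deficiency of Nazarov, is where the $\log(1/\epsilon)$ term comes from.

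Two corrections to your write-up. (i) The entire second half rests on a misreading of the target: you treat $\epsilon\sqrt{\log(k/\epsilon)}$ as \emph{sharper} than $\epsilon\sqrt{\log k}$ when $\epsilon \ll 1/k$, but the extra $\log(1/\epsilon)$ makes the target \emph{weaker}, so no sharpening is needed in any regime. The smooth-max surrogate $F_\beta$, the choice $\beta \asymp \epsilon^{-1}\log(k/\epsilon)$, and the uniform density bound for $F_\beta(Z)$ --- which you correctly flag as the hard, unproven step --- should simply be deleted; they add risk without adding anything. (ii) Your one-line justification of Nazarov's inequality is too optimistic: differentiating $t \mapsto P(Z \leq y + t\mathbf{1})$ coordinatewise and bounding each marginal density gives only $k\,a/\underline{\sigma}$; the improvement to $\sqrt{\log k}$ is precisely the nontrivial content of Nazarov's Gaussian perimeter bound. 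Since you invoke it by name, cite it as a known result rather than suggest it follows from the tail estimate $\mathbb{E}\max_j(Z_j/\sigma_j) \preceq \sqrt{\log k}$ in a line. With those repairs, your argument is a correct and self-contained derivation of the stated inequality.
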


An immediate corollary of this result is the following.

\begin{lemma}\label{lemma::anticoncentration}
Let $Z\sim N(0,\Sigma)$.
There exists $c>0$ depending only on 
$\max_j \Sigma_{jj}$ 
and
$\min_j \Sigma_{jj}$ 
but not on $k$ such that,
for every $\epsilon>0$,
$$
\sup_t \Biggl[P\Bigl( \max_j |Z_j| \leq t+\epsilon\Bigr) - P\Bigl( \max_j |Z_j| \leq t\Bigr)\Biggr] \leq
c  \epsilon \sqrt{ \log(k/\epsilon)}
$$
and
$$
\sup_t [P(\max_j Z_j \leq t + \epsilon) - P(\max_j Z_j \leq t)] \leq c  \epsilon\sqrt{ \log(k/\epsilon)}.
$$

\end{lemma}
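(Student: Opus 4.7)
The plan is to reduce both inequalities directly to the Chernozhukov--Chetverikov--Kato anti-concentration theorem stated immediately before.

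For the second (one-sided) inequality, set $W = \max_j Z_j$ and rewrite the probability as an interval probability: for any $t$,
\begin{equation*}
P(W \le t+\epsilon) - P(W \le t) \;=\; P(t < W \le t+\epsilon) \;\le\; P\bigl(|W - (t+\tfrac{\epsilon}{2})| \le \tfrac{\epsilon}{2}\bigr).
\end{equation*}
Applying the anti-concentration theorem with center $z = t + \epsilon/2$ and radius $\epsilon/2$ bounds this by $C(\epsilon/2)\sqrt{\log(2k/\epsilon)}$. Taking a supremum over $t$ and absorbing the $\log 2$ into the constant $c$ gives the stated bound. The constants $\max_j \Sigma_{jj}$ and $\min_j \Sigma_{jj}$ enter only through the constant in the anti-concentration theorem, so the final $c$ depends only on these quantities and not on $k$.

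For the first inequality, I would use a symmetrization trick to get $\max_j |Z_j|$ into the form the anti-concentration theorem expects. Define $\widetilde Z \in \mathbb{R}^{2k}$ by $\widetilde Z_j = Z_j$ for $j = 1,\ldots,k$ and $\widetilde Z_j = -Z_{j-k}$ for $j = k+1,\ldots,2k$. Then $\widetilde Z$ is a centered Gaussian vector whose coordinates have variances $\mathrm{Var}(\widetilde Z_j) = \Sigma_{jj \bmod k}$, so $\max_j \mathrm{Var}(\widetilde Z_j)$ and $\min_j \mathrm{Var}(\widetilde Z_j)$ are the same as for $Z$, and $\max_j |Z_j| = \max_{j \le 2k} \widetilde Z_j$. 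Now repeat the interval argument from the previous paragraph, but applied to $\max_{j \le 2k} \widetilde Z_j$, and use the anti-concentration bound with $2k$ in place of $k$. This gives an upper bound of $C'\epsilon\sqrt{\log(4k/\epsilon)}$, which is again of the form $c\,\epsilon\sqrt{\log(k/\epsilon)}$ after adjusting $c$.

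There is really no main obstacle here: the argument is purely bookkeeping. The only thing worth checking carefully is that $\log(2k/\epsilon)$ and $\log(4k/\epsilon)$ can be absorbed into $c\sqrt{\log(k/\epsilon)}$ for all $\epsilon > 0$. This holds because for $\epsilon \le k$ the ratio $\log(4k/\epsilon)/\log(k/\epsilon)$ is bounded by a universal constant, and for $\epsilon > k$ the right-hand side $c\epsilon\sqrt{\log(k/\epsilon)}$ is either trivially large (when the log is well-defined) or the probability statement is vacuous since the difference of two probabilities is at most $1$, which is dominated by $c\epsilon\sqrt{\log(k/\epsilon)}$ as long as $c$ is chosen large enough. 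Consequently the stated inequalities follow from the anti-concentration theorem with a constant $c$ that depends only on $\max_j \Sigma_{jj}$ and $\min_j \Sigma_{jj}$.
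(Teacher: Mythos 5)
Your argument is correct and is essentially the paper's own proof: both bound the increment $P(\max \le t+\epsilon)-P(\max \le t)$ by an interval probability for the (possibly symmetrized) Gaussian maximum and invoke the Chernozhukov--Chetverikov--Kato anti-concentration theorem, the only cosmetic differences being that you recenter at $t+\epsilon/2$ with radius $\epsilon/2$ (the paper uses the cruder window $[t-\epsilon,t+\epsilon]$ and a factor $2$) and that you spell out the $2k$-dimensional reduction for $\max_j|Z_j|$, which the paper leaves implicit. Your bookkeeping absorbing $\log(2k/\epsilon)$ into $c\sqrt{\log(k/\epsilon)}$ is slightly loose near $\epsilon\approx k$, but the paper's own statement of the bound is degenerate in that regime as well, so this is not a substantive departure.
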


\begin{proof}
Let $Y = \max_j Z_j$.
Then
\begin{align*}
P\Bigl( \max_j Z_j \leq t+\epsilon\Bigr) - P\Bigl( \max_j Z_j \leq t\Bigr) & \leq
P(t-\epsilon \leq Y \leq t+\epsilon)\\
&=
P(-\epsilon \leq Y-t \leq \epsilon)\\
& \leq
P(|Y-t| \leq \epsilon)\\
& \leq
2 \sup_z P(|Y-z| \leq \epsilon) \leq c  \epsilon \sqrt{ \log(k/\epsilon)}
\end{align*}
where the last inequality is precisely the previous
anti-concentration inequality.
\end{proof}

\begin{remark}
A union bound would have
given a bound of order $k\epsilon$ instead
of $\epsilon\sqrt{\log k/\epsilon}$.
Lemma \ref{lemma::anticoncentration}
leads to much sharper bounds in our delta method and bootstrap bounds.
\end{remark}

\begin{theorem}[Gaussian Comparison; Chernozhukov, Chetverikov and Kato 2013]
\label{theorem::comparison}
Let 
$X=(X_1,\ldots, X_k)\sim N(0,\Sigma_X)$ and
$Y=(Y_1,\ldots, Y_k)\sim N(0,\Sigma_Y)$.
Let
$\Delta = \max_{j,k}|\Sigma_X(j,k)-\Sigma_Y(j,k)$.
Then
\begin{equation}
\sup_z\Bigl| P(\max_j X_j \leq z) - P(\max_j Y_j \leq z)\Bigr| \leq C
\Delta^{1/3} (1 \vee \log(k/\Delta))^{2/3}
\end{equation}
where $C$ is only a function of
$\max_j \Sigma_Y(j,j)$ and
$\min_j \Sigma_Y(j,j)$.
\end{theorem}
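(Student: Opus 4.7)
The plan is to adapt the Slepian interpolation and Stein's method strategy, combined with smoothing and the anti-concentration bound of Lemma \ref{lemma::anticoncentration}. There are two sources of non-smoothness in $P(\max_j X_j \leq z) - P(\max_j Y_j \leq z)$: the indicator $x\mapsto \mathbf{1}(x\leq z)$ and the $\max$ itself. Both must be replaced by smooth surrogates before Gaussian integration by parts can be applied. For the max I would use the soft-max $F_\beta(x) = \beta^{-1}\log\sum_{j=1}^k e^{\beta x_j}$, which satisfies $\max_j x_j \leq F_\beta(x)\leq \max_j x_j + \beta^{-1}\log k$ and whose higher partial derivatives are convex combinations weighted by the softmax probabilities $\pi_j(x)=e^{\beta x_j}/\sum_\ell e^{\beta x_\ell}$, yielding clean $\ell_1$-bounds on $m$-th partials proportional to $\beta^{m-1}$. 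For the indicator I would take a thrice-differentiable $g_\eta:\mathbb{R}\to[0,1]$ equal to $1$ on $(-\infty, z-\eta]$ and to $0$ on $[z,\infty)$, with $\|g_\eta^{(m)}\|_\infty \leq C\eta^{-m}$ for $m=1,2,3$.

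Next, define the interpolation $Z(t) = \sqrt{t}\,X + \sqrt{1-t}\,Y$ for $t\in[0,1]$ and write
\[
\mathbb{E}[g_\eta(F_\beta(X))] - \mathbb{E}[g_\eta(F_\beta(Y))] = \int_0^1 \frac{d}{dt}\mathbb{E}[g_\eta(F_\beta(Z(t)))]\,dt.
\]
Differentiate under the expectation and apply the multivariate Stein identity $\mathbb{E}[W_i f(W)] = \sum_k\Sigma_{ik}\mathbb{E}[\partial_k f(W)]$ for centered Gaussians $W$; the $\sqrt{t}$ and $\sqrt{1-t}$ factors combine after integration by parts so that the first-order terms cancel and only second-order terms remain, leaving an integrand equal to
\[
\frac{1}{2}\sum_{j,k}\bigl(\Sigma_X(j,k)-\Sigma_Y(j,k)\bigr)\,\mathbb{E}\bigl[\partial_j\partial_k(g_\eta\circ F_\beta)(Z(t))\bigr].
\]
Using $|\Sigma_X(j,k)-\Sigma_Y(j,k)|\leq \Delta$, everything reduces to bounding $\sum_{j,k}|\mathbb{E}\partial_j\partial_k(g_\eta\circ F_\beta)(Z(t))|$. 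The chain rule produces terms of the form $g_\eta''(F_\beta)(\partial_j F_\beta)(\partial_k F_\beta)$ and $g_\eta'(F_\beta)\partial_j\partial_k F_\beta$; using $\sum_j\partial_j F_\beta=1$ together with the softmax derivative identities, the double sums telescope and yield an $O(\eta^{-2}+\beta\eta^{-1})$ bound, so $|\mathbb{E}[g_\eta(F_\beta(X))]-\mathbb{E}[g_\eta(F_\beta(Y))]|\leq C\Delta(\eta^{-2}+\beta\eta^{-1})$.

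Finally I would convert back to the indicator. The replacement error between $P(\max_j X_j\leq z)$ and $\mathbb{E}[g_\eta(F_\beta(X))]$ is controlled by a probability of the form $P(z-\eta-\beta^{-1}\log k\leq \max_j X_j \leq z)$ (and analogously for $Y$), which by Lemma \ref{lemma::anticoncentration} is at most $C(\eta+\beta^{-1}\log k)\sqrt{\log(k/(\eta+\beta^{-1}\log k))}$. Collecting all three error contributions and optimizing, choose $\beta\asymp\eta^{-1}\log k$ and $\eta\asymp\Delta^{1/3}(1\vee\log(k/\Delta))^{1/3}$ to balance the terms; this delivers the claimed rate $\Delta^{1/3}(1\vee\log(k/\Delta))^{2/3}$.

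The main obstacle is the derivative bookkeeping in the middle step: obtaining bounds on $\sum_{j,k}|\mathbb{E}\partial_j\partial_k(g_\eta\circ F_\beta)|$ that pair cleanly with the \emph{max-norm} quantity $\Delta$ is what distinguishes this argument from naive Slepian--Stein comparisons, which would lose extra powers of $k$. The softmax normalization $\sum_j\pi_j=1$ is what prevents dimension blow-up, and verifying that the final bound depends on $k$ only logarithmically requires careful tracking of which index sums telescope against $\pi_j$ and which must be absorbed into the $\beta$ factors.
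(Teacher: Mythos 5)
The paper does not prove this result at all: it is quoted verbatim from Chernozhukov, Chetverikov and Kato (2013), so there is no internal proof to compare against. Your sketch essentially reconstructs the argument of that cited reference --- Slepian--Stein interpolation $Z(t)=\sqrt{t}X+\sqrt{1-t}Y$ (with $X$ and $Y$ taken independent, which you should say explicitly), the softmax $F_\beta$ with the identities $\partial_j F_\beta=\pi_j$, $\sum_j\pi_j=1$ and $\sum_{j,k}|\partial_j\partial_k F_\beta|\le 2\beta$ giving $\sum_{j,k}|\partial_j\partial_k(g_\eta\circ F_\beta)|\le \|g_\eta''\|_\infty+2\beta\|g_\eta'\|_\infty$, and anti-concentration to de-smooth --- and the outline is sound. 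Two calibration points deserve care. First, to get a constant depending only on $\max_j\Sigma_Y(j,j)$ and $\min_j\Sigma_Y(j,j)$ you should arrange the sandwich so that anti-concentration is applied only to $Y$: bound $P(\max_j X_j\le z)$ above and below by $\mathbb{E}[g_\eta(F_\beta(X))]$ for suitably shifted $g_\eta$, transfer to $Y$ via the interpolation bound, and only then invoke Lemma \ref{lemma::anticoncentration} for $\max_j Y_j$; applying anti-concentration to $X$ as well (as your parenthetical suggests) would make the constant depend on the diagonal of $\Sigma_X$, which can degenerate when $\Delta$ is comparable to $\min_j\Sigma_Y(j,j)$. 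Second, your final tuning is slightly off: with $\beta\asymp\eta^{-1}\log k$ the total error is of order $\Delta\eta^{-2}\log k+\eta\sqrt{\log(k/\eta)}$, and the choice $\eta\asymp\Delta^{1/3}(1\vee\log(k/\Delta))^{1/3}$ leaves an anti-concentration term of order $\Delta^{1/3}(1\vee\log(k/\Delta))^{5/6}$; taking instead $\eta\asymp\Delta^{1/3}(1\vee\log(k/\Delta))^{1/6}$ balances the two terms and yields the stated rate $\Delta^{1/3}(1\vee\log(k/\Delta))^{2/3}$.
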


\subsection{\bf Berry-Esseen Bounds for High-Dimensional Delta Method}

Define
\begin{equation}
B = \Bigl\{ a:\ ||a-\sigma|| \leq C\sqrt{D^2 \log n/n}\Bigr\}.
\end{equation}
It follows from Lemma \ref{lemma::ballbound} that, for large enough $C$,
$P(s\notin B)\leq 1/n^2$.
We assume throughout the analysis that $s\in B$ as the error this incurs
is of smaller order than the rest of the error terms.
Let $\Theta$ 
and $\hat\Theta$ 
be the matrix of partial correlations and the matrix of
estimate partial correlations.
Let $\theta = {\rm vec}(\Theta)$ and 
$\hat\theta = {\rm vec}(\hat\Theta)$.
Recall that
$$
\theta = (\theta_1,\ldots, \theta_{D^2})^T = G(\sigma) =
(g_1(\sigma),\ldots, g_{D^2}(\sigma)).
$$
By Taylor expansion
and (\ref{eq::qsigma}),
\begin{equation}\label{eq::taylor}
\sqrt{n}(\hat\theta-\theta) = \sqrt{n} L (s-\sigma) + n^{-1/2} R =
\sqrt{n} L \overline{V} -\sqrt{n} L q + \frac{1}{\sqrt{n}} R
\end{equation}
where 
$L = d {\rm vec}(G)/d\sigma^T$ so that
$L$ is the $D^2\times D^2$ matrix
whose $j^{\rm th}$ row
is $\ell_j \equiv d g_j(\sigma)/d\sigma^T$.
Similarly,
$R=(R_1,\ldots, R_{D^2})^T$ where
$R_j = \frac{1}{2} \delta^T H_j \delta$
and
$H_j$ is the Hessian of $g_j$,
evaluated at some point between $s$ and $\sigma$.
Let
\begin{equation}\label{eq::Gamma}
\Gamma = {\rm Var}(\sqrt{n}L (s-\sigma)) = L T_n L^T\ \ \ {\rm and}\ \ \ 
\gamma = {\rm diag}(\Gamma).
\end{equation}
Let 
$$
Z = \sqrt{n}\gamma^{-1/2} (\hat\theta - \theta) = (Z_1,\ldots, Z_{D^2})^T
$$
where
$Z_j = \sqrt{n} (\hat\theta_j - \theta_j)/e_j$
is the normalized estimate
and
$e_j = \gamma^{1/2}(j,j) = \sqrt{\ell_j(\sigma)^T T(\sigma)\ell_j(\sigma)}$.
The covariance of $Z$ is
$$
\tilde\Gamma = \gamma^{-1/2}\Gamma \gamma^{-1/2}.
$$
Note that
$\tilde \Gamma_{jj} =1$ for all $j$.

\begin{theorem}\label{theorem::delta}
Let $W\sim N(0,\tilde\Gamma)$ where
$W\in \mathbb{R}^{D^2}$
and let
$$
\gamma_n = \max_j \sup_{a\in B}  \frac{|\!|\!|H_j(a)|\!|\!|}{\sqrt{\ell_j(a)^T T_n(a)\ell_j(a)}}
\ \ \ {\rm and}\ \ \ 
\xi_n = \max_j \sup_{a\in B}||\gamma^{-1/2} \ell_j(a)||_1.
$$
Then, 
\begin{equation}
\sup_z \Bigl|P( \max_j |Z_j| \leq z) - P( \max_j |W_j| \leq z)\Bigr| \preceq A_n
\end{equation}
where
\begin{equation}\label{eq::An}
A_n =
\frac{\log D}{n^{1/8}} + 
\frac{4(\gamma_n+\xi_n)}{\zeta^2} \sqrt{ \frac{ \log (Dn)}{n}}
\sqrt{ \log\left( \frac{D\zeta^2}{4(\gamma_n+\xi_n)}\sqrt{\frac{n}{\log (Dn)}}\right)}.
\end{equation}
Hence, if
$z_\alpha \equiv -\Phi^{-1}(\alpha/D^2)$ then
$$
P( \max_j |Z_j| > z_\alpha) \leq \alpha + A_n.
$$
\end{theorem}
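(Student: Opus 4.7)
The plan is to decompose $Z_j$ into a linear (approximately Gaussian) term plus a remainder, apply the high-dimensional CLT of Theorem \ref{theorem::clt} to the linear term, control the remainder uniformly in $j$ via the sub-Gaussian concentration bounds in Lemma \ref{lemma::ballbound}, and finally stitch the two pieces together using a multivariate analog of Lemma \ref{lemma::simple} combined with the Gaussian anti-concentration inequality of Lemma \ref{lemma::anticoncentration}.

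Starting from the Taylor expansion (\ref{eq::taylor}), dividing by $e_j$ gives $Z_j = M_j + B_j + C_j$, where $M_j = \ell_j(\sigma)^T \sqrt n\,\overline V/e_j$, $B_j = -\ell_j(\sigma)^T \sqrt n\,q/e_j$, and $C_j = \delta^T H_j\,\delta/(2\sqrt n\,e_j)$ with $H_j$ evaluated at an intermediate point $s^{\ast}\in B$. Since $\sqrt n\,\overline V = n^{-1/2}\sum_{i=1}^n V_i$ is an exact centered sum of i.i.d.\ sub-Gaussian vectors and the coordinates of $\gamma^{-1/2} L V_i$ are themselves sub-Gaussian with joint covariance $\tilde\Gamma$, Theorem \ref{theorem::clt} applied to the $D^2$-dimensional summands yields
\begin{equation*}
\sup_z\bigl|P(\max_j|M_j|\le z) - P(\max_j|W_j|\le z)\bigr| \preceq \frac{\log D}{n^{1/8}}.
\end{equation*}

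To control the remainder, Lemma \ref{lemma::ballbound} shows that, with probability at least $1 - O(1/n^2)$, $\|q\|_\infty \preceq \log(Dn)/(n\zeta^2)$, $\|\delta\|_\infty \preceq \sqrt{\log(Dn)}/\zeta$, and $s^{\ast}\in B$. Using $|B_j|\le \sqrt n\,\|\gamma^{-1/2}\ell_j(\sigma)\|_1\|q\|_\infty \le \sqrt n\,\xi_n\|q\|_\infty$ together with the Hessian estimate $|\delta^T H_j\,\delta|\le |\!|\!|H_j|\!|\!|\,\|\delta\|_\infty^2$, and invoking assumption (A3) to compare $T_n(s^{\ast})$ with $T(\sigma)$, one obtains on this high-probability event
\begin{equation*}
\max_j|B_j+C_j|\;\le\;\eta_n\;\asymp\;\frac{(\gamma_n+\xi_n)\log(Dn)}{\zeta^2\sqrt n}.
\end{equation*}

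The final step repeats the argument of Lemma \ref{lemma::simple} with $\Phi$ replaced by $F(z):=P(\max_j|W_j|\le z)$: on the event $\{\max_j|B_j+C_j|\le\eta_n\}$ one has $\bigl|\max_j|Z_j| - \max_j|M_j|\bigr|\le\eta_n$, hence
\begin{equation*}
\sup_z\bigl|P(\max_j|Z_j|\le z) - F(z)\bigr| \le \sup_z\bigl|P(\max_j|M_j|\le z) - F(z)\bigr| + \sup_z\bigl[F(z+\eta_n) - F(z-\eta_n)\bigr] + P(\max_j|B_j+C_j| > \eta_n).
\end{equation*}
The first summand on the right is $\preceq \log D/n^{1/8}$ by the CLT step; the third is $O(1/n^2)$; the middle term is controlled by Lemma \ref{lemma::anticoncentration} applied to the $2D^2$-dimensional centered Gaussian vector $(W_1,\ldots,W_{D^2},-W_1,\ldots,-W_{D^2})$, yielding $c\,\eta_n\sqrt{\log(D^2/\eta_n)}$; substituting $\eta_n$ produces precisely the second summand of $A_n$. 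Taking $z=z_\alpha$ and using $P(\max_j|W_j|>z_\alpha) \le D^2\cdot 2\Phi(-z_\alpha) = 2\alpha$ (absorbing the factor of $2$) then delivers the uniform tail bound $P(\max_j|Z_j|>z_\alpha)\le\alpha + A_n$. The main obstacle is this anti-concentration step: a crude union bound over the $D^2$ coordinates would inject a factor of $D^2$ and wreck the rate, so the Chernozhukov--Chetverikov--Kato anti-concentration inequality distilled in Lemma \ref{lemma::anticoncentration} is essential to replace that factor by the much milder $\sqrt{\log(D^2/\eta_n)}$ that appears in $A_n$.
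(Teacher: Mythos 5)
Your proposal follows essentially the same route as the paper's proof: the identical decomposition of $Z$ into the linear term $\sqrt n\,\gamma^{-1/2}L\overline V$, the $q$-term and the quadratic remainder, the Chernozhukov--Chetverikov--Kato CLT (Theorem \ref{theorem::clt}) for the linear part, Lemma \ref{lemma::ballbound} to control $q$ and $\delta$, and Lemma \ref{lemma::anticoncentration} to absorb the shift of the threshold without paying a $D^2$ union-bound factor, with the final quantile step matching the paper's union bound over the $W_j$. The only differences are cosmetic or at the level of bookkeeping: the paper carries out the stitching inline through the events $E,E'$ and a $2\epsilon$-shift rather than through your multivariate version of Lemma \ref{lemma::simple}, and your high-probability remainder scale $\eta_n\asymp(\gamma_n+\xi_n)\log(Dn)/(\zeta^2\sqrt n)$ is larger than the paper's chosen $\epsilon\asymp(\gamma_n+\xi_n)\sqrt{\log(Dn)/n}$ by a $\sqrt{\log(Dn)}$ factor, so strictly you recover $A_n$ only up to that logarithmic factor (a tension that is already present in the paper's own claim that $P(E^c)\leq 1/n^2$ with its choice of $\epsilon$), just as your explicit factor of $2$ at $z_\alpha$ mirrors the paper's implicit one-sided/two-sided quantile sloppiness.
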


\begin{remark}
  In the above result, the dimension enters mainly through the terms
  $\gamma_n$ and $\xi_n$. Except for these terms, the dependence on
  $D$ is only logarithmic.  We discuss these terms in Section
  \ref{section::error}.
\end{remark}

\begin{proof}
  By (\ref{eq::taylor}),
$$
Z = \sqrt{n}\gamma^{-1/2} (\hat\theta-\theta)=
\sqrt{n} \gamma^{-1/2} L\overline{V} - \sqrt{n}\gamma^{-1/2} L q + \frac{1}{\sqrt{n}}\gamma^{-1/2} R.
$$
Note that ${\rm Var}(W_i) = {\rm Var}(\sqrt{n} \gamma^{-1/2}L
\overline{V})$.  Fix $\epsilon>0$ and let
$$
E = \Biggl\{ \Bigl| \Bigl| \frac{\gamma^{-1/2} R}{\sqrt{n}} \Bigr| \Bigr|_\infty \leq \epsilon \Biggr\}
\ \ \ {\rm and}\ \ \ 
E' = \Biggl\{ || \sqrt{n}\gamma^{-1/2}L q ||_\infty \leq \epsilon \Biggr\}.
$$
Now
\begin{align*}
P( \max_j |Z_j| \leq z) &=
P( ||\sqrt{n} \gamma^{-1/2} L\overline{V} - \sqrt{n}\gamma^{-1/2} L q
+ \frac{1}{\sqrt{n}}\gamma^{-1/2} R||_\infty \leq z)\\
& \leq
P( ||\sqrt{n} \gamma^{-1/2} L\overline{V}||_\infty 
- ||\sqrt{n}\gamma^{-1/2} L q||_\infty 
- ||\frac{1}{\sqrt{n}}\gamma^{-1/2} R||_\infty \leq z)\\
&=
P( ||\sqrt{n} \gamma^{-1/2} L\overline{V}||_\infty 
- ||\sqrt{n}\gamma^{-1/2} L q||_\infty 
- ||\frac{1}{\sqrt{n}}\gamma^{-1/2} R||_\infty \leq z,\, E)\\
& \ \ \ +
P( ||\sqrt{n} \gamma^{-1/2} L\overline{V}||_\infty 
- ||\sqrt{n}\gamma^{-1/2} L q||_\infty 
- ||\frac{1}{\sqrt{n}}\gamma^{-1/2} R||_\infty \leq z,\ E^c)\\
& \leq
P( ||\sqrt{n} \gamma^{-1/2} L\overline{V}||_\infty 
- ||\sqrt{n}\gamma^{-1/2} L q||_\infty  \leq z+\epsilon) + P(E^c)\\
&=
P( ||\sqrt{n} \gamma^{-1/2} L\overline{V}||_\infty 
- ||\sqrt{n}\gamma^{-1/2} L q||_\infty  \leq z+\epsilon, E')\\
& \ \ \  +
P( ||\sqrt{n} \gamma^{-1/2} L\overline{V}||_\infty 
- ||\sqrt{n}\gamma^{-1/2} L q||_\infty  \leq z+\epsilon, (E')^c) +  P(E^c)\\
& \leq
P( ||\sqrt{n} \gamma^{-1/2} L\overline{V}||_\infty 
   \leq z+2\epsilon) + P(E^c) + P((E')^c).
\end{align*}
So,
\begin{align*}
P( \max_j |Z_j| &\leq z) - P( \max_j |W_j| \leq z)\\
& \leq
P\left(\Biggl|\Biggl| \sqrt{n}\gamma^{-1/2} L\overline{V}
   \Biggr| \Biggr|_\infty \leq z+2\epsilon\right) - 
P( \max_j |W_j| \leq z+2\epsilon)\\
&\ \ \  + P( \max_j |W_j| \leq z+2\epsilon) 
- P( \max_j |W_j| \leq z) + P(E^c) + P((E')^c)\\
& \leq
P\left(\Biggl|\Biggl| \sqrt{n} \gamma^{-1/2}L\overline{V}   
\Biggr| \Biggr|_\infty \leq z+2\epsilon\right) - 
P( \max_j |W_j| \leq z+2\epsilon) + C \epsilon\sqrt{\log D/\epsilon} + P(E^c)+ P((E')^c)\\
& \leq
C \frac{\log D}{n^{1/8}} + C \epsilon \sqrt{\log D/\epsilon} + P(E^c)+ P((E')^c)
\end{align*}
where we used Theorem \ref{theorem::clt} applied to $\overline{V}^* =
\gamma^{-1/2}L\overline{V}$ and Lemma \ref{lemma::anticoncentration}.
Recall that $s\in B$ except on a set of probability $1/n^2$ and on
this set,
$$
\left(\frac{\gamma^{-1/2} R}{\sqrt{n}}\right)_j =
\frac{\delta^T H_j \delta}{\sqrt{n\ell_j^T T_n\ell_j}}  \leq
\gamma_n \sqrt{n}||s-\sigma||_\infty^2
$$
and so by Lemma \ref{lemma::ballbound},
$$
P(E^c) \leq 2D^2 \exp\left( - \frac{-\sqrt{n} \zeta^2 \epsilon^2}{2\gamma_n^2}\right).
$$
Choosing
$$
\epsilon =  \frac{4(\gamma_n+\xi_n)}{\zeta^2} \sqrt{ \frac{ \log (Dn)}{n}}
$$
we have
$P(E^c)  \leq \frac{1}{n^2}$
and
$$
\epsilon\sqrt{\log D/\epsilon} \leq
\frac{4(\gamma_n+\xi_n)}{\zeta^2} \sqrt{ \frac{ \log (Dn)}{n}}
\sqrt{ \log\left( \frac{D\zeta^2}{4(\gamma_n+\xi_n)}\sqrt{\frac{n}{\log (Dn)}}\right)}.
$$
Using Holder's inequality,
$$
|\gamma^{-1/2}\ell_j^T q| \leq ||q||_\infty\ ||\gamma^{-1/2}\ell_j||_1 \leq ||q||_\infty\ \xi_n
$$
so that
$||\gamma^{-1/2} L q||_\infty \leq ||q||_\infty\ \xi_n$.
Hence, using Lemma (\ref{lemma::ballbound}),
\begin{align*}
P((E')^c) &\leq P( ||q||_\infty > \epsilon/\sqrt{\xi_n n}) \leq
4D^2 e^{-\sqrt{n}\zeta^2 \epsilon/(2\xi_n)} \leq \frac{1}{n^2}
\end{align*}
The result follows by computing a similar lower bound and taking the
supremum over $z$.  For the last statement, note that $W_j\sim
N(0,1)$.  So
\begin{align*}
P( \max_j |Z_j| > z_\alpha) &\leq
P( \max_j |W_j| > z_\alpha) +A_n \leq
\sum_j P( |W_j| > z_\alpha) +A_n \leq \alpha + A_n.
\end{align*}
\end{proof}

In practice we need to use
$T_j = \sqrt{n}(\hat\theta_j - \theta_j)/\hat e_j$
where
$\hat e_j = \sqrt{\ell_j(s)^T T(s)\ell_j(s)}\equiv U_j(s)$
is the estimated standard error.
We have the following result for this case.

\begin{theorem}
Define $\gamma_n$ and $\xi_n$
as in the previous theorem.
Let
$$
\rho_n = \max_j \sup_{a\in B}  \frac{||U'_j(a)||_1}{\sqrt{\ell_j(a)^T T_n(a)\ell_j(a)}}
$$
where
$U_j(a) = \sqrt{\ell_j^T(a) T(a) \ell_j(a)}$.
Then, 
$$
\sup_z |P( \max_j |T_j| \leq z) - P( \max_j |W_j| \leq z)| \preceq
A_n + \rho_n \sqrt{\frac{ \log n}{n}}
$$
where $A_n$ is defined in (\ref{eq::An}).
If $z \equiv -\Phi^{-1}(\alpha/D^2)$ then
$$
\sup_z |P( \max_j |T_j| > z)| \leq \alpha +
A_n + \rho_n \sqrt{\frac{ \log n}{n}}.
$$
\end{theorem}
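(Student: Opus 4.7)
The plan is to reduce to Theorem \ref{theorem::delta} by controlling the error introduced when replacing the true normalizer $e_j$ by the plug-in $\hat e_j = U_j(s)$. Since $T_j = Z_j \cdot (e_j/\hat e_j)$, it suffices to show that $\max_j |T_j - Z_j|$ is small with high probability, and then feed this perturbation through the same anti-concentration machinery used at the end of the proof of Theorem \ref{theorem::delta}.

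First I would Taylor-expand $U_j$ around $\sigma$ to get $\hat e_j - e_j = U_j'(\tilde a_j)^T (s - \sigma)$ for some $\tilde a_j$ on the segment from $\sigma$ to $s$, which lies in $B$ except on a set of probability $1/n^2$ by Lemma \ref{lemma::ballbound}. Hölder's inequality and the definition of $\rho_n$ then yield
$$
\frac{|\hat e_j - e_j|}{e_j} \;\leq\; \frac{\|U_j'(\tilde a_j)\|_1}{\sqrt{\ell_j^T T_n \ell_j}} \, \|s - \sigma\|_\infty \;\leq\; \rho_n \, \|s - \sigma\|_\infty.
$$
Applying Lemma \ref{lemma::ballbound} once more with $\epsilon \asymp \sqrt{\log n / n}$ gives $\|s - \sigma\|_\infty \preceq \sqrt{\log n / n}$ with probability $\geq 1 - 1/n^2$. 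So on an event $\mathcal{E}$ of probability $\geq 1 - O(1/n^2)$, and under the implicit assumption $\rho_n \sqrt{\log n/n} = o(1)$, we have $\max_j |e_j/\hat e_j - 1| \preceq \rho_n \sqrt{\log n/n}$.

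Next, decompose $T_j - Z_j = Z_j \,(e_j/\hat e_j - 1)$. On the event $\mathcal{F} = \{\max_j |Z_j| \leq M_n\}$ with $M_n \asymp \sqrt{\log D}$, a standard Gaussian maximal bound for $\max_j |W_j|$ together with Theorem \ref{theorem::delta} gives $P(\mathcal{F}^c) \leq \alpha_0 + A_n$ for a small constant $\alpha_0$. Hence on $\mathcal{E} \cap \mathcal{F}$,
$$
\max_j |T_j - Z_j| \;\preceq\; M_n \, \rho_n \, \sqrt{\log n / n} \;=:\; \epsilon_n.
$$

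Finally, since $|\max_j |T_j| - \max_j |Z_j|| \leq \max_j |T_j - Z_j|$, I would apply Lemma \ref{lemma::simple} with $A_n^{\star} = \max_j |Z_j|$ and $B_n^{\star} = \max_j |T_j| - \max_j |Z_j|$. Using Theorem \ref{theorem::delta} to bound the Kolmogorov distance between $\max_j |Z_j|$ and $\max_j |W_j|$ by $A_n$, and Lemma \ref{lemma::anticoncentration} to absorb the $\pm \epsilon_n$ shift in the threshold $z$ at the cost of $\epsilon_n \sqrt{\log(D/\epsilon_n)}$, the total Kolmogorov error is bounded by $A_n + \epsilon_n \sqrt{\log(D/\epsilon_n)} + P(\mathcal{E}^c) + P(\mathcal{F}^c)$. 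The last statement then follows exactly as at the end of Theorem \ref{theorem::delta}, with $z_\alpha = -\Phi^{-1}(\alpha/D^2)$. The main obstacle is the bookkeeping of logarithmic factors: the naive bound carries an extra $\sqrt{\log D}$ from controlling $\max_j |Z_j|$ and another $\sqrt{\log(D/\epsilon_n)}$ from anti-concentration, and one must absorb these into the $\preceq$ to match the clean form $\rho_n \sqrt{\log n/n}$ as stated.
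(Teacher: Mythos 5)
Your proposal follows essentially the same route as the paper's proof: a mean-value expansion of $U_j$ plus H\"older and Lemma \ref{lemma::ballbound} to show $\max_j|e_j/\hat e_j-1|\preceq \rho_n\sqrt{\log n/n}$ on a high-probability event, a truncation of $\max_j|Z_j|$, and then Lemma \ref{lemma::anticoncentration} together with Theorem \ref{theorem::delta} to absorb the resulting shift in the threshold $z$. The differences are cosmetic---the paper truncates $\max_j Z_j$ at $\sqrt{\log n}$ and uses Markov's inequality where you use $M_n\asymp\sqrt{\log D}$ (take the constant in $M_n$ large enough that the Gaussian tail is polynomially small, rather than settling for a ``small constant $\alpha_0$'', since a fixed constant would spoil the rate)---and the loose logarithmic bookkeeping you flag at the end is present in the paper's own argument as well.
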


\begin{proof}
Let $E=\{ \max_j e_{j}/\hat{e}_{j} < 1+\epsilon\}$ and 
$F =\{ \max Z_{j} < u/\epsilon\}$ where 
$\epsilon = (4\rho_n/\zeta)\sqrt{ \log n/(n\zeta^2)}$ and $u = \epsilon \sqrt{\log(n)}$.  
Note that
$e_{j} - \hat{e}_{j} = U_j(\sigma) - U_j(s) = (\sigma-s)^T U'_j$
where $U'$ is the gradient of $U$ evaluated at some point between
$s$ and $\sigma$.  Then, for $0 < \epsilon \leq 1$,
\begin{align*}
P(E^c) &\leq  P\left( \max_j \frac{e_{j}-\hat{e}_{j}}{e_{j}} > \frac{\epsilon}{1+\epsilon}\right)=
P\left( \max_j \frac{U_j(\sigma)-U_j(s)}{e_{j}} > \frac{\epsilon}{1+\epsilon}\right)\\
&= P\left(\max_j \frac{ (\sigma-s)^T U'_j }{e_{j}} > \frac{\epsilon}{1+\epsilon}\right)\leq
P\left( \frac{||s-\sigma||_\infty \max_j ||U'_j||_1}{e_{j}}> \frac{\epsilon}{1+\epsilon}\right)\\
& \leq P\left( ||s-\sigma||_\infty \rho_n > \frac{\epsilon}{1+\epsilon}\right)=
P\left( ||s-\sigma||_\infty > \frac{\epsilon}{2\rho_n}\right)\\
& \leq D^2 e^{-n\epsilon^2/(2\rho_n^2)} \leq \frac{1}{n^2}.
\end{align*}
Now,
\begin{align*}
P\left( \max_j \frac{\sqrt{n}(\hat\theta_{j}-\theta_{j})}{\hat e_{j}} \leq z \right) &-
P(\max W_j \leq z)\\
& \hspace{-1in} =P\left(\max_j Z_{j}\left(\frac{e_{j}}{\hat e_{j}}\right) \leq z \right) - P(\max W_j \leq z)\\
&  \hspace{-1in}\leq P\left(\max_j Z_{j} (1-\epsilon) \leq z \right) +P(E^c) - P(\max W_j \leq z)\\
&  \hspace{-1in}=P\left(\max_j Z_{j} -Z_{j}\epsilon \leq z \right) +P(E^c) - P(\max W_j \leq z)\\
& \hspace{-1in}\leq P\left(\max_j Z_{j}  \leq z+u \right) + P(F^c) + P(E^c) - P(\max W_j \leq z)\\
& \hspace{-1in}\leq P\left(\max_j Z_{j}  \leq z+u \right) -P(\max W_j \leq z+u)\\
&\ \ \ + C u \sqrt{\log D/u} + P(F^c) + P(E^c)\\
& \hspace{-1in} \leq
\sup_z \Biggl[P\left(\max_j Z_{j}  \leq z \right) -P(\max_j W_j \leq z)\Biggr]\\
&\ \ \ + C u \sqrt{\log D/u} + P(F^c) + P(E^c)\\
&  \hspace{-1in}\leq
A_n + C u \sqrt{\log D/u} + P(F^c) + P(E^c)
\end{align*}
where $A_n$ is defined in (\ref{eq::An}).
Next,
\begin{align*}
P(F^c) &=P(\max_j Z_{j}> u/\epsilon) \leq P(\max_j W_j > u/\epsilon) + A_n\\
&= P(\max_j W_j > \sqrt{\log n}) + A_n\\
& \leq \frac{E(\max_j W_j)}{\sqrt{\log n}} + A_n \preceq
\frac{\sqrt{\log D}}{\sqrt{\log n}} + A_n \preceq A_n.
\end{align*}
So
\begin{align*}
\sup_z |P( \max_j |T_j| \leq z) &- P( \max_j |W_j| \leq z)|  \preceq
\sup_z
[P\left(\max_j Z_{j}  \leq z \right) -P(\max W_j \leq z) ]  + A_n 
\\ &\ \ \ \ +
\frac{1}{n^2} + C u \sqrt{\log D}\\
& \preceq
A_n + \rho_n \sqrt{\frac{ \log n}{n}}.
\end{align*}
A similar lower bound completes the proof.
\end{proof}

\subsection{\bf The Bootstrap}

In this section we assume that $\max_j| Y(j)| \leq B$ for some $B<
\infty$.  This is not necessary but it simplifies the proofs.  We do
not require that $B$ be known.  Let $Y_1^*,\ldots, Y_n^*$ be a sample
from the empirical distribution and let $s^*$ be the corresponding
(vectorized) sample covariance.  Now let $\hat\theta^*$ be the partial
correlations computed from $Y_1^*,\ldots, Y_n^*\sim P_n$ where $P_n$
is the empirical distribution.  The (un-normalized) bootstrap
rectangle for $\theta$ is
$$
{\cal R}_n=\Biggl\{ \theta: ||\theta-\hat\theta||_\infty \leq \frac{Z_\alpha}{\sqrt{n}}\Biggr\}
$$
where $Z_\alpha = \hat F^{-1}(1-\alpha)$
and
\begin{equation}
\hat F(z) = P\biggl(\sqrt{n}||\hat\theta^*-\hat\theta||_\infty \leq z \biggm| Y_1,\ldots,Y_n\biggr)
\end{equation}
is the bootstrap approximation to
$$
F(z) = P(\sqrt{n}||\hat\theta-\theta||_\infty \leq z).
$$
The accuracy of the coverage of the bootstrap rectangle depends on
$\sup_z| \hat F(z) - F(z)|$.

Let
$$
\Gamma = {\rm Var}(\sqrt{n}L (s-\sigma)) = L T_n L^T.
$$
Let $Z\sim N(0,\Gamma)$ where
$Z\in \mathbb{R}^{D^2}$.
First we need the following limit theorem for the un-normalized statistics.

\begin{theorem}
Define $\gamma_n' = \max_j \sup_{a\in B} |\!|\!|H_j(a)|\!|\!|$
and
$\xi_n' = \max_j \sup_{a\in B} ||\ell_j(a)||_1$.
Then
$$
\sup_z 
\Biggl|P(\sqrt{n}||\hat\theta-\theta||_\infty \leq z) - P(||Z||_\infty \leq z) \Biggr| \preceq
\frac{\log D}{n^{1/8}} + A_n'
$$
where
\begin{equation}\label{eq::Anp}
A_n' =
\frac{\log D}{n^{1/8}} + 
\frac{4(\gamma_n'+\xi_n')}{\zeta^2} \sqrt{ \frac{ \log (Dn)}{n}}
\sqrt{ \log\left( \frac{D\zeta^2}{4(\gamma_n'+\xi_n')}\sqrt{\frac{n}{\log (Dn)}}\right)}.
\end{equation}
\end{theorem}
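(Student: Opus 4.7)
The plan is to adapt the argument used for Theorem~\ref{theorem::delta} to the un-normalized statistic. Starting from the Taylor expansion (\ref{eq::taylor}),
\begin{equation*}
\sqrt{n}(\hat\theta-\theta) = \sqrt{n}\, L\,\overline{V} - \sqrt{n}\, L q + \tfrac{1}{\sqrt{n}} R,
\end{equation*}
the leading term $\sqrt{n}L\overline{V}$ already has covariance $\Gamma = LT_n L^T$, which is precisely the covariance of the target $Z$. So the goal reduces to showing that the two remainder terms are uniformly small in $\ell_\infty$, comparing the leading term to $Z$ via the high-dimensional CLT, and absorbing the remainders using Gaussian anti-concentration.

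First I would introduce, for a parameter $\epsilon>0$ to be chosen later, the events $E = \{\|R/\sqrt{n}\|_\infty \leq \epsilon\}$ and $E' = \{\|\sqrt{n}Lq\|_\infty \leq \epsilon\}$. On $E\cap E'$, the triangle inequality gives $|\sqrt{n}\|\hat\theta-\theta\|_\infty - \|\sqrt{n}L\overline{V}\|_\infty| \leq 2\epsilon$, which yields the sandwich
\begin{equation*}
P(\sqrt{n}\|\hat\theta-\theta\|_\infty \leq z) \leq P(\|\sqrt{n}L\overline{V}\|_\infty \leq z + 2\epsilon) + P(E^c) + P((E')^c),
\end{equation*}
and a matching lower bound. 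Applying Theorem~\ref{theorem::clt} to the iid mean-zero vectors $LV_i$ gives $\sup_z|P(\|\sqrt{n}L\overline{V}\|_\infty \leq z) - P(\|Z\|_\infty \leq z)| \preceq \log D/n^{1/8}$, and Lemma~\ref{lemma::anticoncentration} absorbs the $2\epsilon$ shift at cost $c\epsilon\sqrt{\log(D/\epsilon)}$.

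To control the tail events, I would use the quadratic form bound $|R_j|/\sqrt{n} \leq \tfrac{1}{2}|\!|\!|H_j|\!|\!| \cdot \sqrt{n}\|s-\sigma\|_\infty^2 \leq \tfrac{\gamma_n'}{2}\sqrt{n}\|s-\sigma\|_\infty^2$ valid on the ball $B$, together with $\|\sqrt{n}Lq\|_\infty \leq \xi_n'\sqrt{n}\|q\|_\infty$ by H\"older, and then invoke Lemma~\ref{lemma::ballbound} for both $\|s-\sigma\|_\infty$ and $\|q\|_\infty$. Choosing $\epsilon = 4(\gamma_n'+\xi_n')\zeta^{-2}\sqrt{\log(Dn)/n}$, exactly as in Theorem~\ref{theorem::delta} but with primed constants, forces $P(E^c) + P((E')^c) \leq 2/n^2$ and produces the second summand in $A_n'$ from $\epsilon\sqrt{\log(D/\epsilon)}$. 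Combined with the $\log D/n^{1/8}$ contribution from the CLT, this gives the stated bound.

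The main technical nuisance I anticipate is that Lemma~\ref{lemma::anticoncentration} requires two-sided control of $\{\Gamma_{jj}\}_j$; unlike the normalized case where $\tilde\Gamma_{jj}=1$ identically, here the variances $\ell_j^T T_n \ell_j$ vary with $j$ and must be bounded away from $0$ and $\infty$ uniformly. This is delivered by assumptions (A2) and (A3) (lower bound via $\lambda_{\min}(T)\geq c_0$ and $\min_j \gamma_{jj}>0$, upper bound via $\|\ell_j\|_1 \leq \xi_n'$ and $\lambda_{\max}(T)<\infty$), but it needs to be stated carefully so the constant in the anti-concentration bound does not silently inflate $A_n'$.
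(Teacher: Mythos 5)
Your proposal is correct and is essentially the paper's own argument: the paper simply states that the proof is identical to that of Theorem~\ref{theorem::delta} with $\gamma_n'$ and $\xi_n'$ replacing $\gamma_n$ and $\xi_n$, which is exactly the adaptation you carry out (same Taylor expansion, same events $E$, $E'$, same choice of $\epsilon$, CLT plus anti-concentration). Your closing remark about needing two-sided control of the variances $\ell_j^T T_n \ell_j$ for the anti-concentration step is a fair point that the paper leaves implicit in assumption (A3), but it does not change the route.
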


\begin{proof}
The proof is the same as the proof of Theorem
\ref{theorem::delta}
with $\gamma_n'$ and $\xi_n'$ replacing
$\gamma_n$ and $\xi_n$.
\end{proof}

Now we bound 
$\sup_z| \hat F(z) - F(z)|$.

\begin{theorem}\label{thm::bootstrap}
$$
\sup_z |\hat F(z) - F(z)| \preceq
\frac{ \log D}{n^{1/8}} +  (\gamma_n'+\xi_n') \sqrt{\log n /n} +
O_P\left(\left(\frac{\log D}{n}\right)^{1/6}\right)
$$
and hence
$$
P(\theta \notin {\cal R}) \leq \alpha +
\frac{ \log D}{n^{1/8}} +  (\gamma_n'+\xi_n') \sqrt{\log n /n} +
O\left(\left(\frac{\log D}{n}\right)^{1/6}\right).
$$
\end{theorem}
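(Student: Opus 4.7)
\textbf{Proof plan for Theorem \ref{thm::bootstrap}.}
The plan is to approximate both $F$ and $\hat F$ by Gaussian max-norm distributions and then compare those Gaussians. Concretely, let $Z \sim N(0,\Gamma)$ and let $\hat Z \mid Y_1,\ldots,Y_n \sim N(0,\hat\Gamma)$, where $\hat\Gamma = \hat L\, \hat T_n\, \hat L^T$ is the plug-in version of $\Gamma = L T_n L^T$ obtained by evaluating $L$ and $T_n$ at $s$ (rather than at $\sigma$). Writing $G(z) = P(\|Z\|_\infty \leq z)$ and $\hat G(z) = P(\|\hat Z\|_\infty \leq z \mid Y_1,\ldots,Y_n)$, I will decompose
\begin{equation*}
|\hat F(z) - F(z)| \leq |\hat F(z) - \hat G(z)| + |\hat G(z) - G(z)| + |G(z) - F(z)|
\end{equation*}
and bound each piece in turn.

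The outer two pieces are handled by the previous theorem. For $|F(z) - G(z)|$, the bound is exactly $(\log D)/n^{1/8} + A_n'$. For $|\hat F(z) - \hat G(z)|$, I would re-run the same Taylor-expansion-plus-high-dimensional-CLT argument, but conditionally on $Y_1,\ldots,Y_n$, treating the bootstrap sample as i.i.d. draws from $P_n$. The boundedness assumption $|Y(j)|\leq B$ makes the sub-Gaussian/sub-exponential constants under $P_n$ well-behaved with probability tending to one (for instance, $\overline{V}$ concentrates, $\gamma_n'$ and $\xi_n'$ evaluated at $s$ stay close to their population counterparts on the event $\{s\in B\}$, and Lemma \ref{lemma::ballbound} has a bootstrap analogue). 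This gives the same $(\log D)/n^{1/8} + A_n'$-type bound, hence the $(\gamma_n'+\xi_n')\sqrt{\log n/n}$ contribution in the conclusion.

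The middle piece $|\hat G(z) - G(z)|$ is controlled by the Gaussian comparison theorem (Theorem \ref{theorem::comparison}) applied conditionally: setting $\Delta = \|\Gamma - \hat\Gamma\|_{\max}$, we get a bound of order $\Delta^{1/3}(1\vee \log(D^2/\Delta))^{2/3}$. So I need to show that $\Delta = O_P(\sqrt{\log D/n})$, which will then yield the $(\log D/n)^{1/6}$ rate in the theorem statement after absorbing logarithmic factors. To bound $\Delta$, I write $\hat\Gamma - \Gamma = \hat L \hat T_n \hat L^T - L T_n L^T$ and split into three differences $(\hat L - L) T_n L^T$, $\hat L (\hat T_n - T_n) L^T$, $\hat L \hat T_n (\hat L - L)^T$; on the event $s\in B$, each of $\hat L - L$ and $\hat T_n - T_n$ is $O(\|s-\sigma\|_\infty)$ in entrywise norm by smoothness of $G$ and of $T_n$ (Lipschitz on the compact set $B$), and Lemma \ref{lemma::ballbound} gives $\|s-\sigma\|_\infty = O_P(\sqrt{\log D/n})$.

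The main obstacle is the conditional re-derivation of the delta-method expansion for the bootstrap, because one must check that the remainder term in the bootstrap Taylor expansion is uniformly small on a high-probability event, using only moment information from $P_n$ rather than from $P$. This is where the boundedness of $Y$ pays off: it transfers all the sub-Gaussian concentration bounds in Lemma \ref{lemma::ballbound} from $P$ to $P_n$, so that the same choices of $\epsilon$ in the proof of Theorem \ref{theorem::delta} go through conditionally. The final step is to take the supremum over $z$ and combine the three pieces, at which point the dominant terms are exactly those listed in the statement; the probability coverage statement $P(\theta\notin \mathcal{R}) \leq \alpha + (\cdot)$ follows because $F(Z_\alpha) \geq 1-\alpha$ by definition of $Z_\alpha$ as a quantile of $\hat F$.
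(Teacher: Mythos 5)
Your proposal is correct and follows essentially the same route as the paper: the same three-term decomposition (true law vs.\ its Gaussian limit, bootstrap law vs.\ its conditional Gaussian, and the two Gaussians compared via Theorem \ref{theorem::comparison}), with the first two pieces handled by the preceding theorem and its conditional analogue using boundedness of $Y$ to transfer the sub-Gaussian bounds to $P_n$, and $\Delta = O_P(\sqrt{\log D/n})$ giving the $(\log D/n)^{1/6}$ term. The only cosmetic difference is that you take the intermediate Gaussian covariance to be the plug-in $\hat L \hat T_n \hat L^T$ rather than the paper's conditional variance ${\rm Var}(\sqrt{n}L(s^*-s)\mid Y_1,\ldots,Y_n)$, which just adds the extra Lipschitz step on $B$ that you already account for.
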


\begin{proof}
Let $Z\sim N(0,\Gamma)$ and let
$Z'\sim N(0,\Gamma_n)$
where
$\Gamma_n = {\rm Var}(\sqrt{n}L(s^*-s)|Y_1,\ldots, Y_n)$.
Then
\begin{align*}
\sup_z |\hat F(z) - F(z)| &\leq
\sup_z \Bigl|F(z) - P(||Z||_\infty \leq z)\Bigr| +
\sup_z \Bigl|\hat F(z) - P(||Z'||_\infty \leq z)\Bigr|\\
&\  + \sup_z \Bigl| P(||Z'||_\infty \leq z) - P(||Z||_\infty \leq z)\Bigr|\\
&= I \ \ \ +\ \ \ II\ \ \ +\ \ \ III.
\end{align*}
In the previous theorem, we showed that
$I \leq \frac{\log D}{n^{1/8}} + A_n'.$
For $II$,
we proceed exactly as in the proof for of the previous theorem
but with $P_n$ replacing $P$
(and with $Y_1,\ldots, Y_n$ fixed).
This yields, for any $\epsilon >0$,
\begin{align*}
\hat F(z) - P(||Z'||_\infty \leq z) & \preceq
\frac{\log D}{n^{1/8}} +
\epsilon\sqrt{\log D/\epsilon}\\
&\ \ \ +\ 
P(\sqrt{n}||L q^*||_\infty > \epsilon|Y_1,\ldots, Y_n) +
P(n^{-1/2}||R^*||_\infty > \epsilon| Y_1,\ldots, Y_n)
\end{align*}
where
$q^* = {\rm vec}( (\overline{Y}^* - \overline{Y})(\overline{Y}^* - \overline{Y})^T)$,
$R^*_j = (1/2)\delta^T H^*_j \delta^*$,
$\delta^* = \sqrt{n}(s^*-s)$
and $H_j^*$ is the Hessian of $g_j$
evaluated at a point between $s$ and $s^*$.

Since all the $Y_i$'s are contained in the bounded rectangle
$B\times \cdots \times B$,
it follows that under the empirical measure $P_n$,
$Y_i^*$ is sub-Gaussian with
$\zeta = B$.
It then follows that
$s^*\in B$ expect on a set of probability at most $1/n$.
Choosing
$$
\epsilon =  \frac{4(\gamma_n'+\xi_n')}{B^2} \sqrt{ \frac{ \log (Dn)}{n}}
$$
and arguing as in
the proof of Theorem \ref{theorem::delta}
we conclude that
\begin{align*}
\hat F(z) - P(||Z'||_\infty \leq z) & \preceq
\frac{\log D}{n^{1/8}} +
\epsilon\sqrt{\log D/\epsilon}\\
&\ \ \ +\ 
P(\sqrt{n}||L q^*||_\infty > \epsilon|Y_1,\ldots, Y_n) +
P(n^{-1/2}||R^*||_\infty > \epsilon| Y_1,\ldots, Y_n)\\
& \leq
\frac{\log D}{n^{1/8}} +
O_P(A_n').
\end{align*}
For $III$, we use Theorem \ref{theorem::comparison} which implies that
$$
III \leq
C\, \Delta^{1/3} (1 \vee \log(k/\Delta))^{2/3}
$$
where $\Delta = \max_{s,t}| \Gamma(s,t) - \Gamma_n(s,t)|.$ Each
element of $\Gamma_n(s,t)$ is a sample moment and $\Gamma(s,t)$ is
corresponding population moment, and so, since $P_n$ is sub-Gaussian,
$\Delta = O_P(\sqrt{\log D /n})$.  Hence, $III = O_P\left( \frac{\log
    D}{n}\right)^{1/6}.$
\end{proof}

\subsection{\bf A Super-Accurate Bootstrap}
\label{sec::super}

Now we describe a modified approach to the bootstrap that has coverage
error only $O(\log D/n^{1/8})$ which is much more accurate than the
usual bootstrap as described in the last section.  The idea is very
simple.  Let ${\cal R}$ be the $1-\alpha$ bootstrap confidence
rectangle for $\sigma$ described in Section
\ref{section::correlation-graphs}.  Write $\theta = G(\sigma)$ and
define
$$
{\cal T} = \Bigl\{ G(\sigma):\ \sigma\in {\cal R} \Bigr\}.
$$
By construction, ${\cal T}$ inherits the
coverage properties of ${\cal R}$ and so we have immediately:

\begin{corollary}
$$
P(\theta\in{\cal T}) \geq 1-\alpha - O\left(\frac{ \log D}{n^{1/8}}\right) - O\left(\frac{ \log D}{n}\right)^{1/6}.
$$
\end{corollary}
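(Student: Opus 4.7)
The plan is to exploit the fact that $\mathcal{T}$ is defined as the image $G(\mathcal{R})$, so coverage of $\theta$ by $\mathcal{T}$ is implied, as a deterministic set-inclusion, by coverage of $\sigma$ by $\mathcal{R}$. The only real content is then to quote the coverage rate already established for $\mathcal{R}$ in Section \ref{section::correlation-graphs}.

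Concretely, first I would record the pointwise observation that if $\sigma \in \mathcal{R}$, then by construction $\theta = G(\sigma) \in \{G(\sigma'): \sigma' \in \mathcal{R}\} = \mathcal{T}$. Consequently the event $\{\sigma \in \mathcal{R}\}$ is contained in the event $\{\theta \in \mathcal{T}\}$, and hence
\begin{equation*}
P(\theta \in \mathcal{T}) \ \geq\ P(\sigma \in \mathcal{R}).
\end{equation*}
This step is trivial and is really the whole point of the construction: we avoid paying the delta-method error by never linearizing $G$; instead we transport the confidence set through the (nonlinear) map $G$.

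Next I would invoke the bootstrap coverage guarantee for $\mathcal{R}$. The rectangle $\mathcal{R}$ is exactly the bootstrap confidence rectangle for $\sigma$ built from the centered, un-normalized statistic $\sqrt{n}\,\|s-\sigma\|_\infty$ together with its bootstrap analogue $\sqrt{n}\,\|s^*-s\|_\infty$. This is the setting of Theorem \ref{thm::bootstrap} applied to the trivial map $G(\sigma)=\sigma$: the Hessians $H_j$ all vanish so $\gamma_n'=0$, and each gradient $\ell_j$ is a standard basis vector so $\xi_n'=1$. The $(\gamma_n'+\xi_n')\sqrt{\log n/n}$ term is therefore $O(\sqrt{\log n/n})$, which is dominated by $\log D / n^{1/8}$. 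The remaining two contributions in Theorem \ref{thm::bootstrap} persist, giving
\begin{equation*}
P(\sigma \in \mathcal{R}) \ \geq\ 1-\alpha - O\!\left(\frac{\log D}{n^{1/8}}\right) - O\!\left(\frac{\log D}{n}\right)^{1/6}.
\end{equation*}
Chaining this with the inclusion from the first step yields the stated bound. No main obstacle arises, because all of the hard work (high-dimensional CLT, Gaussian comparison, anti-concentration, bootstrap consistency for $\sigma$) has already been packaged into Theorem \ref{thm::bootstrap}. The only thing to double-check is that applying Theorem \ref{thm::bootstrap} to the identity map is legitimate, i.e., that the proof of that theorem does not implicitly assume a non-degenerate $G$; inspection shows it does not, since the linearization error terms collapse harmlessly to zero in that case.
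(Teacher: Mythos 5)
Your proof is correct and follows essentially the same route as the paper: coverage of $\mathcal{T}$ comes purely from the set inclusion $\{\sigma\in\mathcal{R}\}\subseteq\{\theta\in\mathcal{T}\}$ together with the bootstrap coverage of the rectangle $\mathcal{R}$, whose error terms are dimension-free. The only cosmetic difference is that the paper quotes the rectangle of Section \ref{section::correlation-graphs} (Theorem \ref{thm::correlations}, where $\gamma_n'$ and $\xi_n'$ are $O(1)$ because the gradient and Hessian involve only bivariate moments), while you specialize Theorem \ref{thm::bootstrap} to the identity map with $\gamma_n'=0$, $\xi_n'=1$; both yield the same $O\left(\log D/n^{1/8}\right)+O\left((\log D/n)^{1/6}\right)$ bound.
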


The set ${\cal T}$ then defines confidence sets for each $\theta_j$, namely,
$$
C_j = \Bigl[ \inf \{g_j(\sigma):\ \sigma\in {\cal R}\},\ \ \ \sup \{g_j(\sigma):\ \sigma\in {\cal R}\}\Bigr].
$$
We should stress that, in general, obtaining a confidence set by
mapping a confidence rectangle can lead to wide intervals.  However,
our foremost concern in this paper is coverage accuracy.

Constructing the set ${\cal T}$ can be difficult.  But it is easy to
get an approximation.  We draw a large sample $\sigma_1,\ldots,
\sigma_N$ from a uniform distribution on the rectangle ${\cal R}$.
Now let
$$
\underline{\theta}_j = \min_{1\leq s\leq N}g_j(\sigma_s),\ \ \ 
\overline{\theta}_j = \max_{1\leq s\leq N}g_j(\sigma_s).
$$
Then $[\underline{\theta}_j,\overline{\theta}_j]$ approximates the
confidence interval for $\theta_j$.  Alternatively, we take
$\sigma_1,\ldots, \sigma_N$ to be the bootstrap replications that are
contained in ${\cal R}$.  Note that there is no need for a multiple
comparison correction as the original confidence rectangle is a
simultaneous confidence set.

\subsection{\bf Comments on the Error Terms}
\label{section::error}

The accuracy of the delta method depends on the dimension $D$ mainly
through the terms $\gamma_n$, $\xi_n$ and $\rho_n$.  Similarly, the
accuracy of the (first version of the) bootstrap depends on $\gamma_n'$ and $\xi_n'$.  In
this section we look at the size of these terms.  We focus on
$\gamma_n'$ and $\xi_n'$.

Recall that $\ell_j = d \theta_j/d\sigma^T$.
Then
$$
\ell_{j}(\sigma) = \frac{d \theta_{j}}{d\sigma^T} =
\frac{d \theta_{j}}{d\omega^T}
\frac{d \omega}{d\sigma^T}.
$$
Let $(s,t)$ be such that
$\theta_j = \Theta_{st}$.
Then,
$\frac{d \theta_{j}}{d\omega^T}$ is $1\times D^2$ and
$\frac{d \omega}{d\sigma^T}$ is $D^2 \times D^2$.
Now
$\frac{d \omega}{d\sigma^T} = - \Omega \otimes \Omega$
and
$\frac{d \theta_{j}}{d\omega^T}$ is 0
except for three entries, namely,
$$
\frac{d \theta_{j}}{d \Omega_{ss}} = - \frac{\theta_{j}}{2\Omega_{ss}},\ \ \ \ \ 
\frac{d \theta_{j}}{d \Omega_{tt}} = - \frac{\theta_{j}}{2\Omega_{ss}},\ \ \ \ \ 
\frac{d \theta_{j}}{d \Omega_{st}} =  \frac{\theta_{j}}{\Omega_{st}}.
$$
Define
$(J,K,M)$ by
$\sigma_{J} = \Sigma_{ss}$,
$\sigma_{K} = \Sigma_{tt}$ and
$\sigma_{M} = \Sigma_{st}$.
Then
\begin{equation}
\ell_j = \frac{d \theta_{j}}{d\sigma^T} =
\frac{\theta_{j}}{2\Omega_{ss}} [\Omega \otimes \Omega]_J +
\frac{\theta_{j}}{\Omega_{st}} [\Omega \otimes \Omega]_M + 
\frac{\theta_{j}}{2\Omega_{tt}} [\Omega \otimes \Omega]_{K}=f_j (\Omega\otimes\Omega)
\end{equation}
where
$[A]_j$ denotes the $j^{\rm th}$ row of $A$
and $f_j$
is a sparse vector that is 0
except for three entries.

Now the Hessian is
$H_j= \left(\frac{d \ell_1}{d\sigma^T},\ldots, \frac{d \ell_{D^2}}{d\sigma^T}\right)^T$
where
$$
\frac{d \ell_j}{d\sigma^T} = 
\frac{d \ell_j}{d\omega^T}\frac{d \omega}{d\sigma^T} = - \frac{d \ell_j}{d\omega^T}(\Omega\otimes\Omega).
$$
Now
\begin{align*}
\frac{d \ell_j}{d\omega^T} &=
\left( \left(\frac{d\omega}{d\sigma^T}\right)^T \otimes I\right)
\frac{d}{d\omega^T} \left(\frac{d\theta_j}{d\omega}\right) +
\left( I \otimes \frac{d\theta_j}{d\omega^T}\right)
\frac{d}{d\omega^T} \frac{d\omega}{d\sigma^T}\\
&= -
(\Omega\otimes\Omega\otimes I)f_j -
(I\otimes f_j) \frac{d}{d\omega^T} (\Omega\otimes\Omega)\\
&=-
(\Omega\otimes\Omega\otimes I)f_j -
(I\otimes f_j) (I_D \otimes K_{(D,D)} \otimes I_D)
(I_{D^2} \otimes {\rm vec}(\Omega):{\rm vec}(\Omega)\otimes I_{D^2});
\end{align*}
where we used the fact that
$$
\frac{d {\rm vec} (\Omega \otimes \Omega)}{d\omega^T} =
(I_D \otimes K_{(D,D)} \otimes I_D)
(I_{D^2} \otimes {\rm vec}(\Omega):{\rm vec}(\Omega)\otimes I_{D^2});
$$
see, for example, p 185 of
\cite{magnus1988matrix}
Note that $||f_j||_0 = O(1)$ independent of $D$.
The presence of this sparse vector
helps to prevent the gradient and Hessian from getting too large.

By direct examination of $\ell_j$ and $H_j$ we see that the size of
$\gamma_n'$ and $\xi_n'$ depends on how dense $\Omega$ is.  In
particular, when $\Omega$ is diagonally dominant, $\gamma_n'$ and
$\xi_n'$ are both $O(1)$.  In this case the error terms have size
$O((\log D_n)/n^{1/8})$.  However, if $\Omega$ is dense, then
$||\ell_j||_1$ can be of order $O(D^2)$ and and $|\!|\!| H_j|\!|\!|$
can be of order $O(D^4)$.  In this case the error can be as large as
$D^4/n^{1/8}$.  On the other hand, the bootstrap in Section
\ref{sec::super} always has accuracy $O((\log D_n)/n^{1/8})$.  But the
length of the intervals could be large when $\Omega$ is dense.  And
note that even in the favorable case, we still require $D_n < n$ for
the results to hold.  (We conjecture that this can be relaxed by using
shrinkage methods as in \cite{schafer2005shrinkage}.)  These
observations motivate the methods in the next section which avoid
direct inferences about the partial correlation graph in the
high-dimensional case.

It is interesting to compare the size of the errors to other work on
inference with increasing dimension.  For example,
\cite{portnoy1988asymptotic} gets accuracy $\sqrt{D^{3/2}/n}$ for
maximum likelihood estimators in exponential families and
\cite{mammen1993bootstrap} gets accuracy $\sqrt{D^2/n}$ for the
bootstrap for linear models.

\subsection{\bf Back To Graphs}

Finally, we can use the above methods for estimating a graph with
confidence guarantees.  We put an edge between $j$ and $k$ only if 0
is excluded from the confidence interval for $\theta_{jk}$.  The
desired guarantee stated in (\ref{eq::goal1}) then holds.

\section{The High Dimensional Case}
\label{section::methods3}

Now we consider the case where $D_n > n$.  We present three methods
for dealing with the high-dimensional case:

\begin{enumerate}

\item {\bf Correlation graphs}.  This is a common technique in
  biostatistics.  We connect two nodes if the confidence interval for
  two variables excludes $[-\epsilon,\epsilon]$ for some threshold
  $\epsilon\in [0,1]$.  Our contribution here is to provide confidence
  guarantees using the bootstrap that are valid as long as $D =
  o(e^{n^{1/7}})$.  In this paper we use $\epsilon=0$.

\item {\bf Cluster graphs}. We cluster the features and average the
  features within each cluster.  As long as the number of clusters $L$
  is $o(n)$ we get valid inferences.  Related to cluster graphs are
  {\bf block graphs}.  In this case, we again cluster the nodes.  But
  then we make no connections between clusters and we use an
  undirected graph within clusters.

\item {\bf Restricted Graphs}. Define the restricted partial correlation
$$
\theta_{jk} \equiv \sup_{|S| \leq L} |\theta(Y_j,Y_k|Y_S)|
$$
where $L$ is some fixed number, $\theta(Y_j,Y_k|Y_S)$ is the partial
correlation between $Y_j$ and $Y_k$ given the set of variables $Y_S$
where $S$ varies over all subsets of $\{1,\ldots,D\}-\{j,k\}$ of size
$L$ These are sometimes called lower-order partial correlations.  Now
construct a graph based on the restricted partial correlations.  Note
that $L=0$ is a correlation graph and $L=D$ is a partial correlation
graph.  (This is similar to the idea in Castelo and Roverato, 2006).
The bootstrap leads to valid inferences only requiring $D =
o(e^{n^{1/7}})$.

\end{enumerate}

\begin{remark}
  Following \cite{schafer2005shrinkage}, we could estimate $U =
  (1-\lambda)\Sigma + \lambda T$ where $T$ is, for example, a diagonal
  matrix.  The graph is constructed from biased partial correlations
  corresponding to $U^{-1}$.  When $\lambda$ is close to 1,
  high-dimensional asymptotic confidence intervals have accurate
  coverage.  Thus we have a bias-validity tradeoff.  Investigating
  this tradeoff is quite involved and so we will examine this method
  elsewhere.
\end{remark}

In this section we make the following assumptions.

(A1) $Y$ and ${\rm vec}(Y Y^T)$ are sub-Gaussian.

(A2) $\max_j \mathbb{E} |V_i(j)|^3 \leq C$
where
$V_i = {\rm vec}[(Y_i-\mu)(Y_i-\mu)^T] - \sigma$.

(A3) $D_n = o(e^{n^{1/7}})$.

\vspace{1cm}

The proofs of the results in this section are similar to those in
Section \ref{section::moderate} but they are easier as the error terms
are, by design, not dependent on dimension sensitive quantities like
$\gamma_n$ and $\xi_n$.  Because of this, we shall only present proof
outlines.

\subsection{\bf Correlation graphs}
\label{section::correlation-graphs}

The simplest approach to constructing graphs is to use correlation or
covariances rather than partial correlation.  Let $\rho_{jk}$ denoted
the correlation between $Y(j)$ and $Y(k)$.  The true graph
$G_\epsilon$ connects $j$ and $k$ if $|\rho(j,k)| > \epsilon$ where $0
\leq \epsilon\leq 1$ is some user-specified threshold.  The algorithm
is in Figure \ref{fig::correlation-algorithm}.  Of course, we can use
either $\rho$ or $\sigma$; we get the same graph from either.

\begin{figure}
\fbox{\parbox{7in}{
\begin{enum}
\item Select a threshold $\epsilon$.
\item Compute the sample covariance matrix $R$.
\item Construct a $1-\alpha$ bootstrap confidence rectangle ${\cal R}$
for the correlations.
\item Put an edge between nodes $j$ and $k$ if $[-\epsilon,\epsilon]$
is not in the confidence interval for $\rho_{jk}$.
\end{enum}}}
\caption{The Correlation Graph Algorithm.}
\label{fig::correlation-algorithm}
\end{figure}

\begin{theorem}\label{thm::correlations}
  Let $r_{jk}$ denote the sample correlation between $Y(j)$ and $Y(k)$
  and let $r$ be the $D^2\times 1$ vector of correlations.  Similarly,
  let $\rho$ be the vector of true correlations.  Define $Z_\alpha$ by
  the bootstrap equation
\begin{equation}
P\Bigl(\max_{jk} \sqrt{n}|r_{jk}^* - r_{jk}| > Z_\alpha \ \Bigm| \ Y_1,\ldots, Y_n\Bigr) = \alpha.
\end{equation}
Let
$$
{\cal R} = \Bigl\{ a\in \mathbb{R}^{D^2}:\ ||a-r||_\infty \leq \frac{Z_\alpha}{\sqrt{n}} \Bigr\}.
$$
Then
$$
P(\rho \in {\cal R}) \geq 1-\alpha -
O\left(\frac{ \log D}{n^{1/8}}\right)- O\left( \frac{\log D}{n}\right)^{1/6}.
$$
We thus have
\begin{equation}
P( \hat G_\epsilon \subset G_\epsilon\ \ {\rm for\ all\ }\epsilon) \geq 1-\alpha +
\frac{ \log D}{n^{1/8}} +
O\left( \frac{\log D}{n}\right)^{1/6}.
\end{equation}
\end{theorem}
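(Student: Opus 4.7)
The plan is to adapt the argument of Theorem \ref{thm::bootstrap} to correlations instead of partial correlations. Correlations are a substantially smoother function of the covariance entries: writing $\rho_{jk}=\sigma_{jk}/\sqrt{\sigma_{jj}\sigma_{kk}}$, the gradient with respect to $\sigma$ has only three non-zero entries per coordinate and, under the eigenvalue bound (A2) applied to the reduced problem, these entries are $O(1)$ uniformly in $D$. Consequently the analogues of $\gamma_n'$ and $\xi_n'$ for the correlation map are $O(1)$, so the complicating terms that dominated the partial-correlation analysis are absent here.

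First I would linearize. With $r=G(s)$ for the correlation map, a Taylor expansion gives $\sqrt{n}(r-\rho) = \sqrt{n}L(s-\sigma)+n^{-1/2}R$ where $L$ is the Jacobian at $\sigma$ and $R$ is a quadratic remainder controlled by the Hessian. Combined with the decomposition $s-\sigma=\overline V-q$ from \eqref{eq::qsigma} and Lemma \ref{lemma::ballbound}, the remainder and $\sqrt{n}Lq$ terms are $O_P(\sqrt{\log(Dn)/n})$ in max-norm. Let $Z\sim N(0,\Gamma)$ with $\Gamma=LT_n L^T$. Applying the high-dimensional CLT (Theorem \ref{theorem::clt}) to the coordinates of $\sqrt{n}L\overline V$, together with the anti-concentration bound of Lemma \ref{lemma::anticoncentration} and the sandwiching device of Lemma \ref{lemma::simple}, yields
\[
\sup_z\Bigl|P\bigl(\sqrt{n}\|r-\rho\|_\infty\le z\bigr) - P(\|Z\|_\infty\le z)\Bigr| \preceq \frac{\log D}{n^{1/8}}.
\]

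Second I would run the identical argument under the empirical measure $P_n$, treating the data as fixed. Because $Y_i$ lies in a sub-Gaussian class and $s\in B$ except on a set of probability $1/n^2$, the bootstrap samples $Y_i^*$ inherit sub-Gaussianity with a constant determined by $P$, and the quadratic and centering remainders are controlled with the same tail bounds. This produces a Gaussian $Z'\sim N(0,\Gamma_n)$ with $\Gamma_n=\mathrm{Var}(\sqrt{n}L(s^*-s)\mid Y_1,\ldots,Y_n)$ and
\[
\sup_z\Bigl|\hat F(z)-P(\|Z'\|_\infty\le z)\Bigr| \preceq \frac{\log D}{n^{1/8}} + O_P\!\left(\sqrt{\log(Dn)/n}\right).
\]
Finally the Gaussian comparison theorem (Theorem \ref{theorem::comparison}) bounds $\sup_z|P(\|Z\|_\infty\le z)-P(\|Z'\|_\infty\le z)|$ by $C\Delta^{1/3}(1\vee\log(D^2/\Delta))^{2/3}$ where $\Delta=\max_{s,t}|\Gamma(s,t)-\Gamma_n(s,t)|$. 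Each entry of $\Gamma_n$ is a sample moment of sub-Gaussian quantities, so $\Delta=O_P(\sqrt{\log D/n})$ and this term is $O_P((\log D/n)^{1/6})$. A triangle inequality combines the three estimates to give $\sup_z|\hat F(z)-F(z)| \preceq (\log D)/n^{1/8}+O_P((\log D/n)^{1/6})$, which translates to the coverage statement for $\mathcal R$.

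The graph conclusion is then immediate: on the event $\{\rho\in\mathcal R\}$, every true correlation lies in its interval, so an edge $(j,k)\in\hat G_\epsilon$ (the CI excludes $[-\epsilon,\epsilon]$) forces $|\rho_{jk}|>\epsilon$, i.e.\ $(j,k)\in G_\epsilon$; since this inclusion holds simultaneously for every $\epsilon$ on the same event, the second displayed inequality follows with no additional probability loss. The main technical obstacle I anticipate is verifying that the coarse tail bounds in Lemma \ref{lemma::ballbound} suffice for both $s$ and $s^*$ to lie in $B$ with probability $1-O(1/n)$ under the bootstrap measure, and keeping track of the Jacobian norms so the $\gamma_n',\xi_n'$ factors are genuinely $O(1)$ for the correlation map (as opposed to the partial-correlation case, where they can blow up); this is what makes assumption (A3) on dimension, $D_n=o(e^{n^{1/7}})$, suffice rather than the more restrictive $D_n<n$ needed in Section \ref{section::moderate}.
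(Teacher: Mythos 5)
Your proposal is correct and follows essentially the same route as the paper: the paper's own argument is simply ``same as Theorem \ref{thm::bootstrap}'' together with the observation that, for correlations, the gradient $\ell_j$ and Hessian $H_j$ depend only on the bivariate distribution of the relevant pair, so $\gamma_n'$ and $\xi_n'$ are $O(1)$ independent of $D$ --- exactly the point you make via the three-nonzero-entry structure of the correlation map. Your fleshing out of the three-term decomposition (HD-CLT, bootstrap-world repetition, Gaussian comparison) and the simultaneous-in-$\epsilon$ graph conclusion matches what the paper intends, so there is nothing further to add.
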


\begin{remark}
A very refined Berry-Esseen result for
a single correlation was obtained by \cite{Pinelis}.
\end{remark}

{\bf Proof Outline.}  The proof is the same as the proof of Theorem
\ref{thm::bootstrap}.  However, in this case, it is easy to see that
$\gamma_n'$ and $\xi_n'$ are $O(1)$, independent of the $D$ since the
gradient $\ell_j$ and Hessian $H_j$ is a function only of the
bivariate distribution of $(Y(s),Y(t))$ corresponding to the
correlation. $\square$

\subsection{\bf Cluster Graphs and Block Graphs}

The idea here is to partition the features into clusters, average the
features within each cluster and then form the graph for the new
derived features.  If the clusters are sufficiently few, then valid
inference is possible.

There are many clustering methods.  Here we consider choosing a set of
representative features --- or prototypes --- using the $L$-centers
algorithm, which we describe below.  Then we assign each feature to
its nearest center.  We average the features within each cluster and
then find the undirected graph of these new $L$ derived features.  Let
$\tilde G$ be the graph for these new features.  We estimate $\tilde
G$ using confidence intervals for the partial correlations.  Note that
the graph $\tilde G$ as well as the estimated graph $\hat G$ are both
random.

To ensure the validity of the confidence intervals, we use data
spitting.  We split the data randomly into two halves.  The first half
is used for clustering.  The confidence intervals are constructed from
the second half of the data.

\begin{figure}
\fbox{\parbox{7in}{
\begin{enum}
\item Choose $L = o(n)$.
\item Randomly split the data into two halves ${\cal D}_1$ and ${\cal D}_2$.
\item Using ${\cal D}_1$ select $L$ proto-features:
\begin{enum}
\item Choose a feature $j$ randomly and set ${\cal S} = \{j\}$ and
${\cal C} = \{1,\ldots,D\} - {\cal S}$.
\item Repeat until ${\cal S}$ has $L$ elements:
\begin{enum} 
\item For each $j\in {\cal C}$ compute the minimum distance
$d_j = \min_{i\in {\cal S}}d(i,j)$.
\item Find $j\in {\cal C}$ to maximize $d_j$.
Move $j$ from ${\cal C}$ to ${\cal S}$.
\end{enum}
\item For $L$ clusters by assigning each feature to its closest center.
\item Average the features within each clusters.
\end{enum}
\item Using ${\cal D}_2$, construct a confidence graph for the $L$ new features using either the delta method
or the bootstrap from Section \ref{section::moderate}.
\item (Optional): Construct a correlation graph for the features within each cluster.
\end{enum}}}
\caption{The Cluster Graph Algorithm}
\label{fig::cluster-algorithm}
\end{figure}

The cluster-graph algorithm is described in Figure
\ref{fig::cluster-algorithm}.  It is assumed in the algorithm that the
number of features $L=o(n)$ is specified by the user.  An improvement
is to use a data-driven approach to choosing $L$.  We leave this to
future work.

The asymptotic validity of the method follows from the results in
Section \ref{section::moderate} together with the data-splitting step.
Without the data-splitting step, the proofs in Section
\ref{section::moderate} would not be valid since the feature selection
process would introduce a bias.  The independence introduced by the
splitting thus seems critical.  Whether it is possible to eliminate
the data-splitting is an open problem.  Let us state, without proof,
the validity assuming the bootstrap is used.  A similar result holds
for the delta method.

\begin{theorem}
  Let $\theta$ be the vector of $k$ partial correlations for the
  features selected from the first half of the data.  Let ${\cal R}$
  be the confidence rectangle using the second half of the data.  Then
\begin{equation}
P(\theta \notin {\cal R}) \leq \alpha +
\frac{ (\log L)}{n^{1/8}} +  (\gamma_n'+\xi_n') \sqrt{\log n /n} +
O\left(\left(\frac{\log L}{n}\right)^{1/6}\right)
\end{equation}
where $\gamma_n'$ and $\xi_n'$ are functions
of the distribution of the selected features.
\end{theorem}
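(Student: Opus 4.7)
The plan is to reduce the theorem to Theorem \ref{thm::bootstrap} applied conditionally on the first half of the data $\mathcal{D}_1$. The essential observation is that data-splitting renders the clustering step deterministic once we condition on $\mathcal{D}_1$: the choice of the $L$ proto-features and the assignment of original features to clusters are functions of $\mathcal{D}_1$ alone, so the cluster-averaged vector $\tilde Y \in \mathbb{R}^L$ is a fixed linear transformation $\tilde Y = A Y$ of the original feature vector, with $A$ measurable with respect to $\mathcal{D}_1$. The second half $\mathcal{D}_2$ then consists of $n/2$ i.i.d.\ copies of $\tilde Y$ that are independent of $\mathcal{D}_1$, so conditionally on $\mathcal{D}_1$ we are exactly in the setting of Section \ref{section::moderate} but with ambient dimension $L$ instead of $D$.

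First I would check that, conditionally on $\mathcal{D}_1$, the transformed vector $\tilde Y$ satisfies assumptions (A1)--(A2) with constants that do not blow up. Since each coordinate of $\tilde Y$ is an average of at most $D$ sub-Gaussian coordinates of $Y$, both $\tilde Y$ and $\mathrm{vec}(\tilde Y \tilde Y^T)$ remain sub-Gaussian (the latter because $\tilde Y \tilde Y^T = A Y Y^T A^T$ and averaging preserves sub-Gaussianity of the vectorized outer product up to absorbing constants into $\zeta$). The third-moment bound (A2) is also preserved for the analogous vector $\tilde V_i$. This step is routine but must be done carefully to confirm that the constants appearing in the Chernozhukov--Chetverikov--Kato bounds are uniform in $n$.

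Next I would apply Theorem \ref{thm::bootstrap} conditionally on $\mathcal{D}_1$, with $D$ replaced by $L$ and $n$ replaced by $n/2$ (which only affects absolute constants hidden in $\preceq$). This gives, for a set $\tilde\Gamma_n'$ and $\tilde\xi_n'$ defined in terms of the conditional population covariance of $\tilde Y$,
\begin{equation*}
P(\theta \notin \mathcal{R} \mid \mathcal{D}_1)
\;\leq\; \alpha
+ \frac{\log L}{n^{1/8}}
+ (\tilde\gamma_n' + \tilde\xi_n')\sqrt{\tfrac{\log n}{n}}
+ O\!\left(\Bigl(\tfrac{\log L}{n}\Bigr)^{1/6}\right).
\end{equation*}
Taking expectation over $\mathcal{D}_1$ then yields the stated unconditional bound, with $\gamma_n'$ and $\xi_n'$ reinterpreted as the (random, $\mathcal{D}_1$-measurable) gradient and Hessian quantities associated with the distribution of the $L$ selected features, exactly as claimed in the theorem.

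The main obstacle is verifying, in the preservation step, that the bounds $\gamma_n'$ and $\xi_n'$ built from the $L$-dimensional partial-correlation map behave well. Because the cluster-averaged features are linear combinations of possibly many sub-Gaussian variables, the smallest eigenvalue of the covariance of $\tilde Y$ (needed for (A3) to apply to the reduced problem) and the sparsity of the gradient $f_j$ analyzed in Section \ref{section::error} both transfer without penalty, but the precise size of $\tilde\gamma_n'$, $\tilde\xi_n'$ depends on how dense the implied precision matrix of $\tilde Y$ is. This is why the theorem keeps these quantities symbolic rather than giving an explicit rate in $L$; bounding them quantitatively (e.g.\ assuming the cluster centers are well-separated enough for $\tilde\Omega$ to be diagonally dominant) would be the natural next step, but is beyond what the statement requires.
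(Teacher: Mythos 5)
The paper actually states this theorem \emph{without proof}, remarking only that validity ``follows from the results in Section \ref{section::moderate} together with the data-splitting step,'' and your proposal is exactly that argument written out: condition on ${\cal D}_1$ so the cluster map is fixed, apply Theorem \ref{thm::bootstrap} to the $L$-dimensional averaged features computed from the independent half ${\cal D}_2$, and average over ${\cal D}_1$, which is correct and consistent with the paper's intent (the bound's constants $\gamma_n',\xi_n'$ remaining ${\cal D}_1$-measurable, as in the paper's own loose statement). The only over-claim is that the eigenvalue/variance conditions transfer ``without penalty'' --- cluster averaging can shrink $\lambda_{\rm min}$ of the reduced covariance by a factor of the largest cluster size --- but since those quantities are kept symbolic in the statement, this does not affect the result.
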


An alternative is to use block graphs.  For block graphs, we first
cluster the nodes.  Then we make no connections between clusters and
we use an undirected graph within clusters based on the bootstrap.  In
this case, it is required that the number of nodes within each block
be $o(n)$.  However, our experiments with block graphs have been
disappointing and we do not pursue block graphs further.

Yet another possibility is as follows.  For each $(j,k)$ let $Z_{jk}$
be a dimension reduction of the variables $(Y(s):\ s\neq j,k)$.  Then
we could estimate the partial correaltion of $Y(j)$ and $Y(k)$ given
$Z_{jk}$.  This would require a separate dimension reduction step for
each pair $(j,k)$.

\subsection{\bf Restricted Partial Correlations}

Instead of building a graph from partial correlations, we can use a
weaker measure of dependence.  Motivated by \cite{castelo2006robust},
we define
\begin{equation}
\theta_{jk} = \sup_{|S|\leq L} |\theta(X_i,X_j|X_S)|.
\end{equation}
For $L=0$ we get a correlation graph.  For $L=D$ we get back the usual
partial correlation graph.  By choosing $L = o(n)$ we get something in
between these two cases while still retaining validity of the
confidence intervals.

The estimate of $\theta_{jk}$ is the sample version
\begin{equation}
\hat\theta_{jk} = \sup_{|S|\leq k} |\hat\theta(X_i,X_j|X_S)|.
\end{equation}

\begin{theorem}
Define $Z_\alpha$ by the bootstrap equation
\begin{equation}
P\Bigl(\max_{jk} \sqrt{n}|\hat\theta^* - \hat\theta| > Z_\alpha \ \Bigm| \ Y_1,\ldots, Y_n\Bigr) = \alpha.
\end{equation}
Let
$$
{\cal R} = \Bigl\{ a\in \mathbb{R}^{D^2}:\ ||a-\hat\theta||_\infty \leq \frac{Z_\alpha}{\sqrt{n}} \Bigr\}.
$$
Then
$$
P(\theta \in {\cal R}) \geq 1-\alpha -
O\left(\frac{ (\log L)}{n^{1/8}}\right)-
O\left(\left(\frac{\log L}{n}\right)^{1/6}\right).
$$
\end{theorem}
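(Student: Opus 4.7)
The plan is to reduce this statement to a direct application of the bootstrap machinery used in Theorem \ref{thm::bootstrap}, but applied to an enlarged collection of ordinary partial correlations rather than to their pairwise suprema.

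First, for each pair $(j,k)$ and each conditioning set $S$ with $|S|\leq L$, let $\theta_{jkS} = \theta(X_j,X_k|X_S)$ and let $\hat\theta_{jkS}$ be its sample version. The elementary inequality
$$
\bigl|\sup_{|S|\leq L} |\hat\theta_{jkS}| - \sup_{|S|\leq L} |\theta_{jkS}|\bigr| \leq \sup_{|S|\leq L} |\hat\theta_{jkS} - \theta_{jkS}|
$$
implies that
$$
\max_{jk} \sqrt{n}\bigl|\hat\theta_{jk}-\theta_{jk}\bigr| \;\leq\; \max_{(j,k,S)} \sqrt{n}\bigl|\hat\theta_{jkS}-\theta_{jkS}\bigr|,
$$
and the same inequality holds for the bootstrap analogues. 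Hence, if $\tilde Z_\alpha$ is the bootstrap quantile of $\max_{(j,k,S)}\sqrt{n}|\hat\theta_{jkS}^* - \hat\theta_{jkS}|$, then $Z_\alpha \leq \tilde Z_\alpha$, and it suffices to prove the claim for the enlarged rectangle built from all lower-order partial correlations indexed by triples $(j,k,S)$.

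Second, I would apply the proof of Theorem \ref{thm::bootstrap} verbatim, but with the effective ``parameter vector'' $\theta$ replaced by the vector of all $(j,k,S)$-indexed partial correlations. This vector has length $k_n \leq D^2 \binom{D}{L} \leq D^{L+2}$, so every occurrence of $\log D$ in the bound becomes $\log k_n = O(L\log D)$. Under the assumption $D = o(\exp(n^{1/7}))$ and for $L$ fixed (or bounded by a small power of $n^{1/7}$), the resulting $(\log k_n)/n^{1/8}$ term remains $o(1)$. Each such partial correlation is a smooth function of the $(L+2)\times(L+2)$ submatrix of $\Sigma$ indexed by $\{j,k\}\cup S$, so the gradient vector $\ell_{jkS}$ has at most $(L+2)^2$ nonzero entries and the Hessian $H_{jkS}$ has at most $(L+2)^4$. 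Consequently, $\gamma_n'$ and $\xi_n'$ for this enlarged system are $O(1)$ in $D$ (bounded in terms of $L$ and the eigenvalues of the relevant submatrices), so the moderate-dimensional error terms $(\gamma_n'+\xi_n')\sqrt{\log n /n}$ do not explode.

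Third, assembling these pieces as in the three-term decomposition $I+II+III$ in the proof of Theorem \ref{thm::bootstrap}, I would obtain the high-dimensional CLT error of order $(\log k_n)/n^{1/8}$, a bootstrap-to-Gaussian error also of this order, and a Gaussian-comparison error of order $(\log k_n / n)^{1/6}$ via Theorem \ref{theorem::comparison} applied with $\Delta = O_P(\sqrt{\log k_n / n})$. Relabelling $\log k_n$ as the effective ``$\log L$'' in the statement (absorbing the polynomial dependence on $D$ into the $L$-dependent constant), one obtains the stated bound.

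The main obstacle is the interaction between the two sources of dimensionality: the outer $D^2$ pairs and the inner $\binom{D}{L}$ conditioning sets. If $L$ grows with $n$, the factor $(L+2)\log D$ becomes the binding constraint, and one must verify that $(\gamma_n'+\xi_n')$, which depend on sub-block spectra of $\Sigma$, remain uniformly bounded across the exponentially many $(L+2)\times(L+2)$ submatrices. This likely requires a uniform lower bound on the smallest eigenvalue of all such submatrices, inherited from (A2) when available, and is the place where the strength of the hidden assumptions in the statement is really felt.
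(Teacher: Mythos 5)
Your overall strategy—treat each lower-order partial correlation $\theta(X_j,X_k|X_S)$ as a smooth function of the $(L+2)$-dimensional marginal covariance, so that the gradient and Hessian terms $\gamma_n',\xi_n'$ are dimension-free and the machinery of Theorem \ref{thm::bootstrap} applies with only logarithmic dependence on the number of statistics—is exactly the spirit of the paper's (very terse) proof, which simply says the argument is the same as Theorem \ref{thm::correlations} with $L$ fixed. However, your first reduction contains a genuine logical gap. You observe that $\max_{jk}\sqrt{n}|\hat\theta_{jk}-\theta_{jk}|\leq M_n:=\max_{(j,k,S)}\sqrt{n}|\hat\theta_{jkS}-\theta_{jkS}|$ and that $Z_\alpha\leq \tilde Z_\alpha$, and conclude that ``it suffices to prove the claim for the enlarged rectangle.'' But the theorem asks for $P\bigl(\max_{jk}\sqrt{n}|\hat\theta_{jk}-\theta_{jk}|\leq Z_\alpha\bigr)\geq 1-\alpha-r_n$, i.e.\ coverage with the \emph{smaller} quantile $Z_\alpha$, and knowing $P(M_n\leq \tilde Z_\alpha)\geq 1-\alpha-r_n$ says nothing about this: on the event $\{M_n\leq\tilde Z_\alpha\}$ one can still have $\max_{jk}\sqrt{n}|\hat\theta_{jk}-\theta_{jk}|>Z_\alpha$. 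Both of your inequalities shrink the same side, so the comparison goes the wrong way. To close the argument you must either (a) compare the bootstrap law of the sup-of-sups statistic directly with its sampling law, as in the three-term decomposition of Theorem \ref{thm::bootstrap} — which is delicate precisely because $\sup_{|S|\leq L}|\cdot|$ is a non-differentiable functional and its naive bootstrap is problematic when the supremum over $S$ is attained at (near-)ties, absent a margin/unique-maximizer condition — or (b) change the procedure to use $\tilde Z_\alpha$ (bootstrap the max over all triples $(j,k,S)$), which your argument does validate but which is not the statement being proved.

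Two smaller points. First, your relabelling of $\log k_n$ as ``$\log L$'' is not legitimate: $\log k_n = O(L\log D)$ and $\log D$ is allowed to grow (up to $o(n^{1/7})$ under (A3)), so the polynomial-in-$D$ factor cannot be absorbed into an $L$-dependent constant; the honest bound from your route is of order $L\log D/n^{1/8}$ plus $(L\log D/n)^{1/6}$, and the $\log L$ in the stated theorem should be read as inherited notation rather than something your argument literally produces. Second, your closing concern about uniform eigenvalue control over all $\binom{D}{L}$ submatrices is well placed: the paper implicitly assumes $L$ fixed and the relevant $(L+2)\times(L+2)$ marginals well conditioned, which is where $\gamma_n',\xi_n'=O(1)$ comes from; making that explicit is an improvement over the paper, but it does not repair the quantile-direction gap above.
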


The proof is basically the same as the proof of Theorem
\ref{thm::correlations}.  We remark, however, that in this case, $L$
has to be fixed and chosen in advance.

We think that the restricted partial correlation idea is very
promising but currently we have no efficient way to compute the graph
this way.  To compute the restricted partial correlation we would need
to do the following: for each pair $(j,k)$ we have to search over the
$\binom{D-2}{L}$ subsets and find the maximum.  This is repeated for
all $D^2$ pairs.  Then the entire procedure needs to be bootstrapped.
Despite the fact that the method is currently not computationally
feasible, we include it because we believe that it may be possible in
the future to find efficient computational approximations.

\section{Experiments}
\label{section::experiments}

In this section we illustrate the methods with some simple examples.
We consider three models:
\begin{enum}
\item Dense Model: $\Omega_{jk} = a$ for all $j\neq k$.
\item Markov Chain: $X_j = a X_{j+1} + \epsilon_j$.
\item Structural Equation Model: $X_j = a\sum_{s=1}^{j-1} X_s + \epsilon_j$, $j=2,\ldots, D$.
\end{enum}
The purpose of the experiments is to get some intuitive sense of how
much information in the original graph is captured in the dimension
reduced graph.

In each case we show results for bootstrap.  We stopped when the
results became numerically unstable.  Then we increased the dimension
and switched to the high dimensional methods, namely, the cluster
graphs, the correlation graphs and the restricted graphs.  (We do not
include the block graphs which did not work well.)  The results are in
Figures \ref{fig::UndirectedDense}, \ref{fig::UndirectedMarkov},
\ref{fig::UndirectedSEM}, \ref{fig::UndirectedDenseProto},
\ref{fig::UndirectedMarkovProto} and \ref{fig::UndirectedSEMProto}.

The results for the dense model are good up to $D=50$.  After that,
the cluster graph method is used and it clearly captures the
qualitative features of the graph.  or the Markov graph, validity
holds as $D$ increases but the power starts to decrease leading to
missing edges.  The cluster graph is interesting here as it obviously
cannot reconstruct the Markov structure but still does capture
interesting qualitative features of the underlying graph.  The SEM
model is difficult; it is a complete graph but some edges are harder
to detect.  The power again falls off as $D$ increases.  Again we see
that the cluster graph loses information but permits us to find a
graph with qualitative features similar to the true graph with higher
dimensions.

The correlation graph for the dense and SEM models, while preserving
validity has essentially no power.  More precisely, the graphical
model leaves a very small imprint in the correlation matrix.  For
example, the covariance in the dense model is easily seen to be
$O(a/D)$. So while the inverse covariance matrix is dense, the
covariance matrix has small entries.  The correlation graph for the
Markov model does contain useful information as shown in Figure
\ref{fig::CorrelationMarkov}.  Of course, there are extra edges due to
the induced correlations.  Nevertheless, most of the essential
structure is apparent.

We also considered the behavior of the correlation graph for a few
other models.  Figure \ref{fig::methods1} shows the correlation graph
for a null model, a dense covariance matrix, a four-block model and a
partial Markov chain (10 edges). In each case, $n=100$ and $D=12$.
Figure \ref{fig::methods2} shows the same models but with $D=200$.
For these models the method does very well even with $D>n$.

\begin{figure}
\begin{center}
\includegraphics[scale=.5]{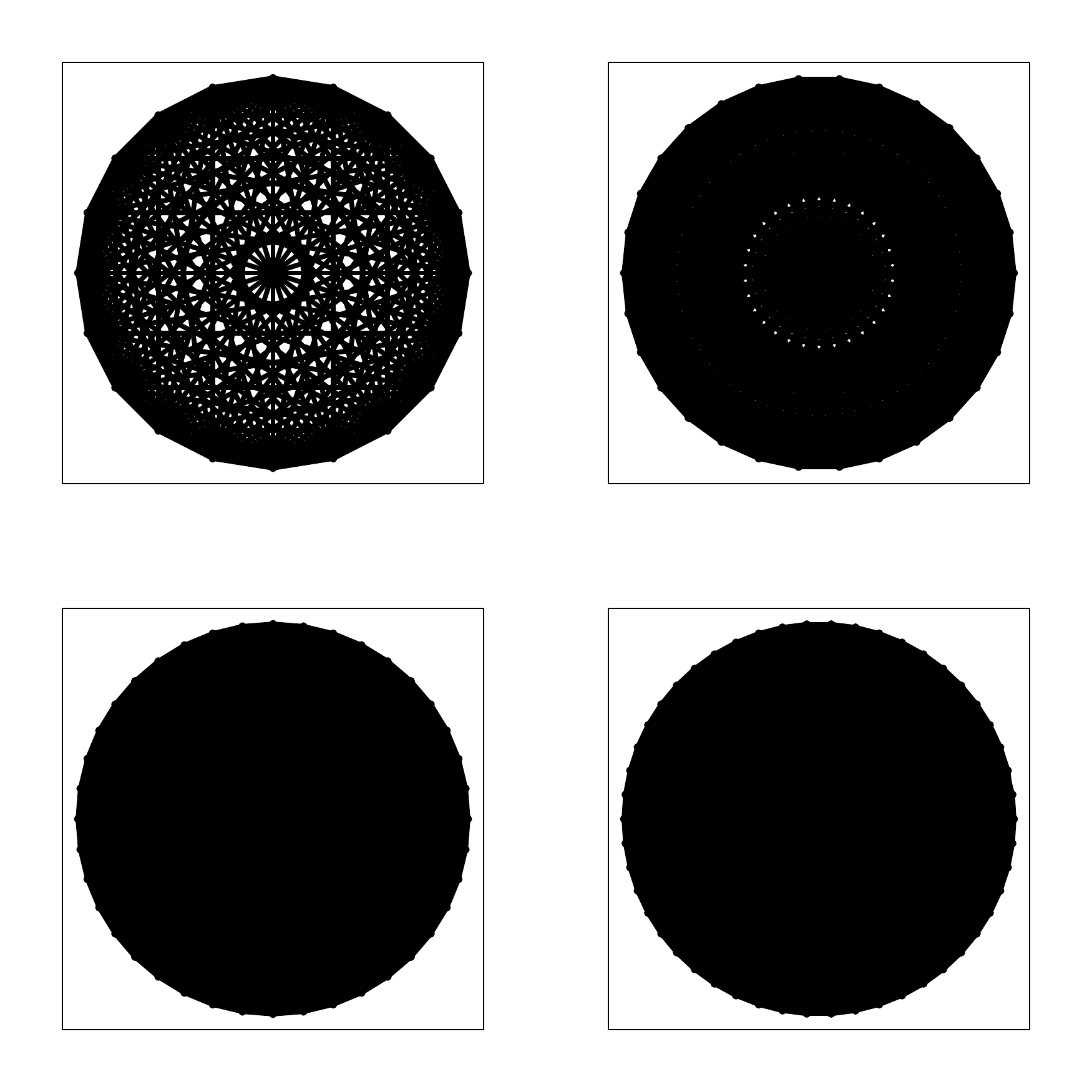}
\end{center}
\caption{Bootstrap based undirected graph for Dense model with
$\alpha =.9$, $a=.9$, $n=100$ and
dimensions 20,30,40,50.}
\label{fig::UndirectedDense}
\end{figure}

\begin{figure}
\begin{center}
\includegraphics[scale=.5]{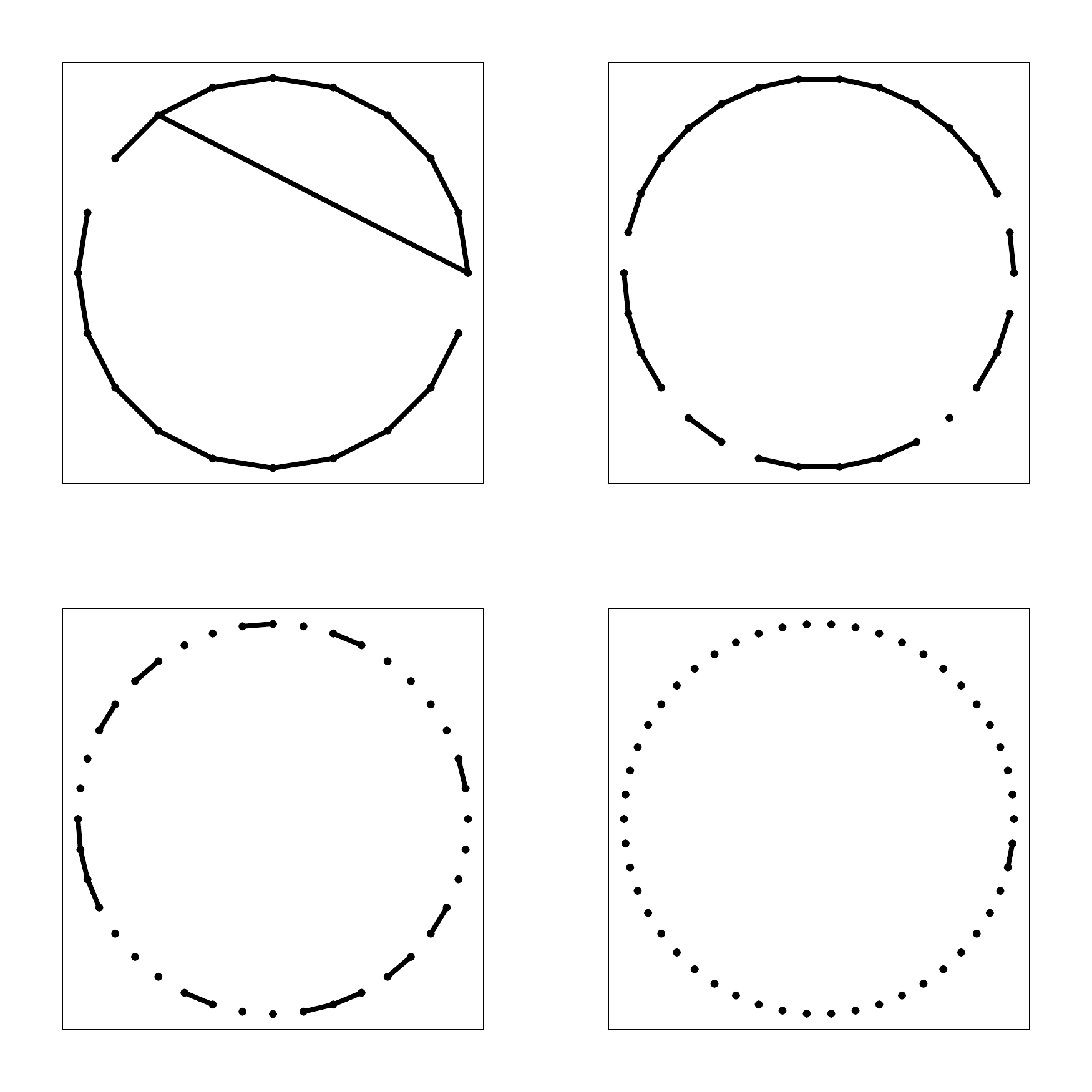}
\end{center}
\caption{Bootstrap based undirected graph for Markov model with
$\alpha =.9$, $a=.9$, $n=100$ and
dimensions 20,30,40,50.}
\label{fig::UndirectedMarkov}
\end{figure}

\begin{figure}
\begin{center}
\includegraphics[scale=.5]{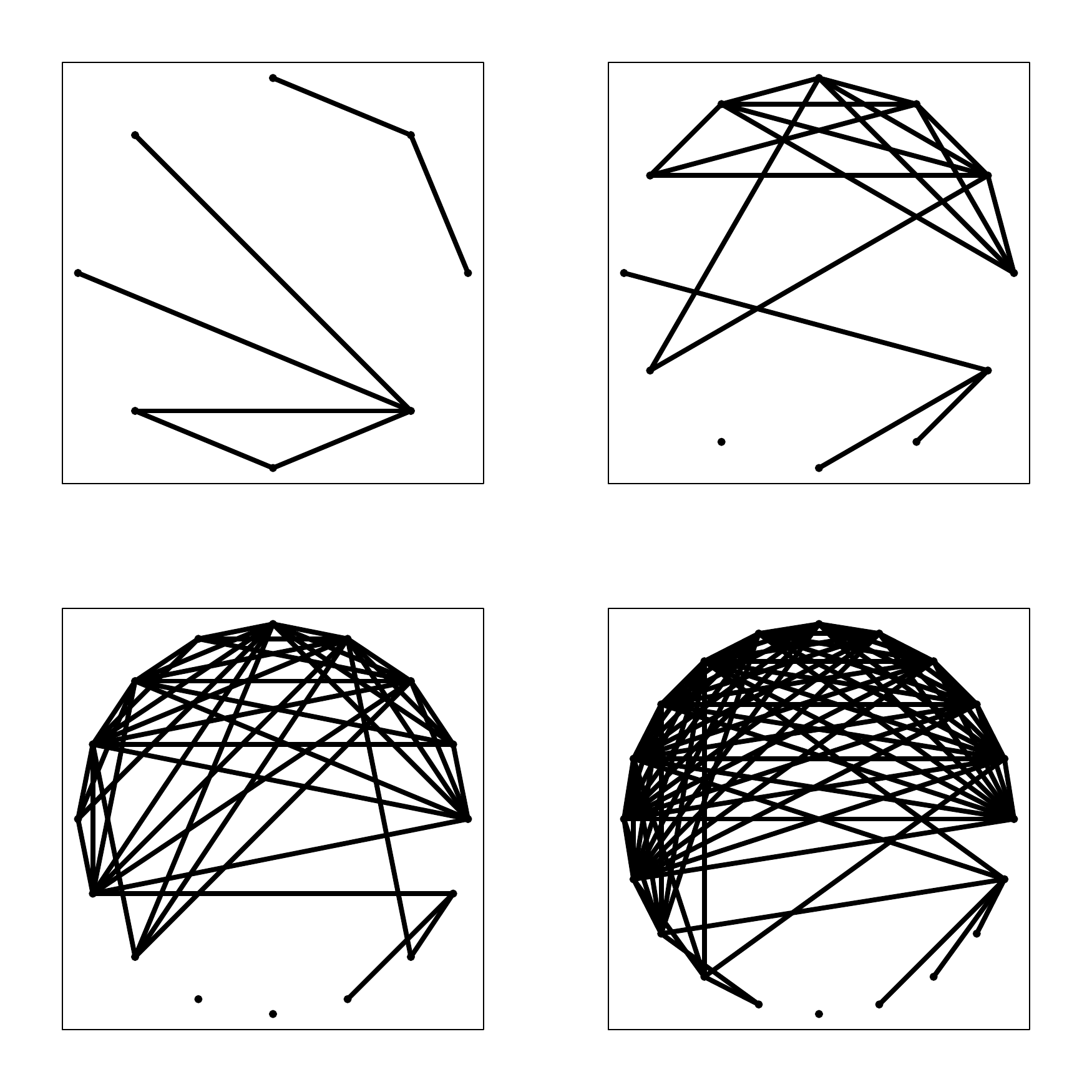}
\end{center}
\caption{Bootstrap based undirected graph for SEM model with
$\alpha =.9$, $a=.5$, $n=100$ and
dimensions 8,12,16,20.}
\label{fig::UndirectedSEM}
\end{figure}

\begin{figure}
\begin{center}
\includegraphics[scale=.5]{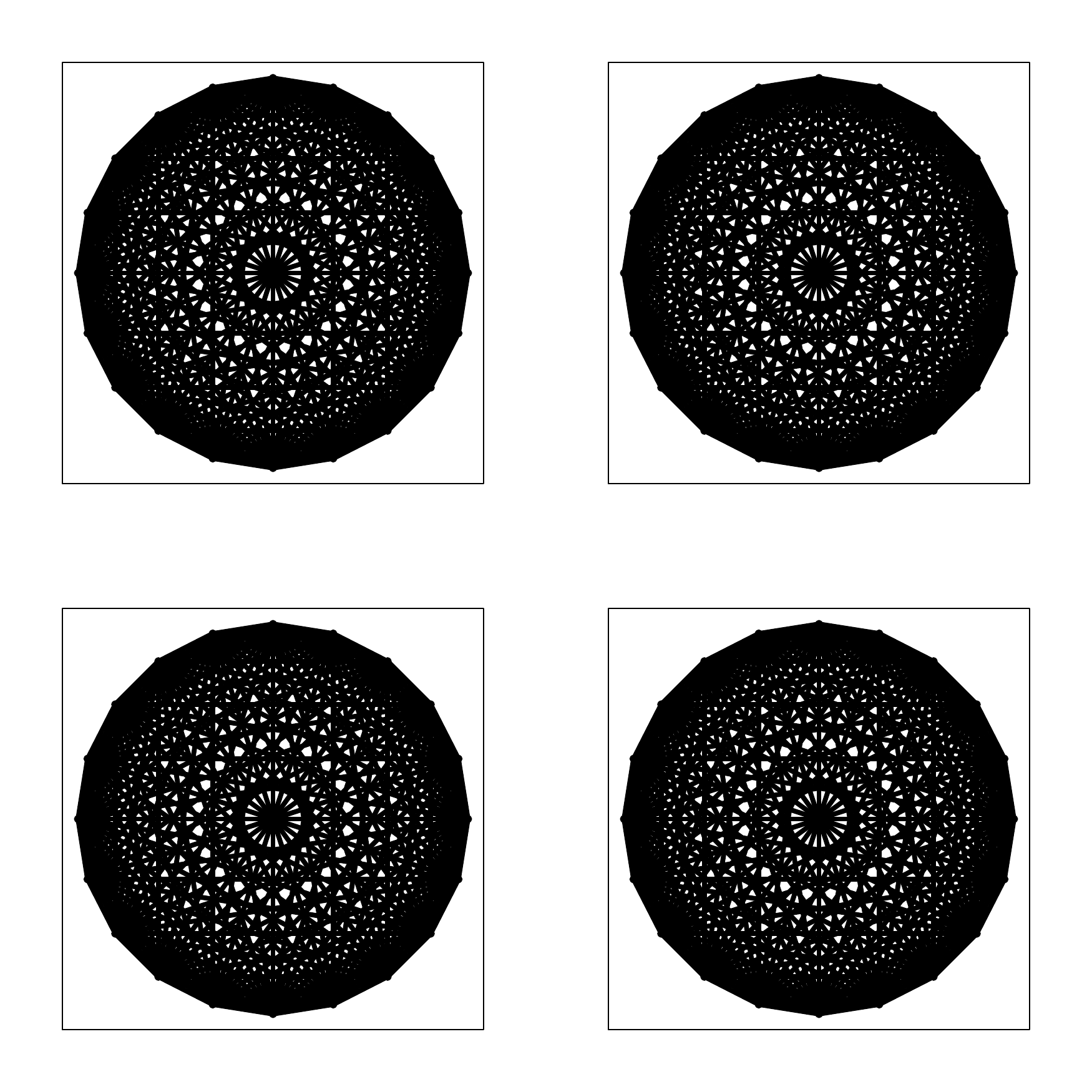}
\end{center}
\caption{Cluster graph for Dense model with
$\alpha =.9$, $a=.9$, $n=100$ and
dimensions 70, 80, 90, 100 and $L=20$.}
\label{fig::UndirectedDenseProto}
\end{figure}

\begin{figure}
\begin{center}
\includegraphics[scale=.5]{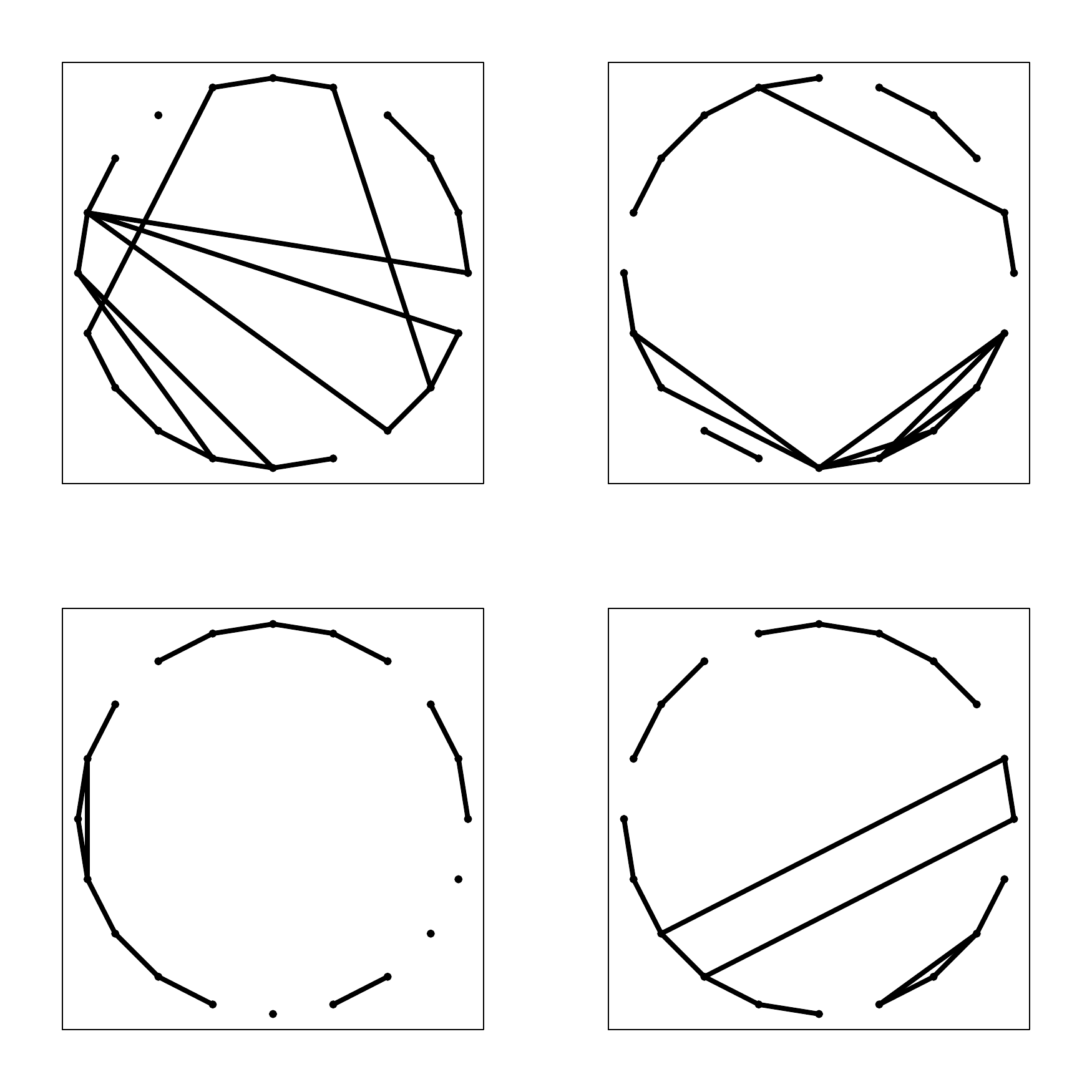}
\end{center}
\caption{Cluster graph for Markov model with
$\alpha =.9$, $a=.9$, $n=100$ and
dimensions 70, 80, 90, 100 and $L=20$.}
\label{fig::UndirectedMarkovProto}
\end{figure}

\begin{figure}
\begin{center}
\includegraphics[scale=.5]{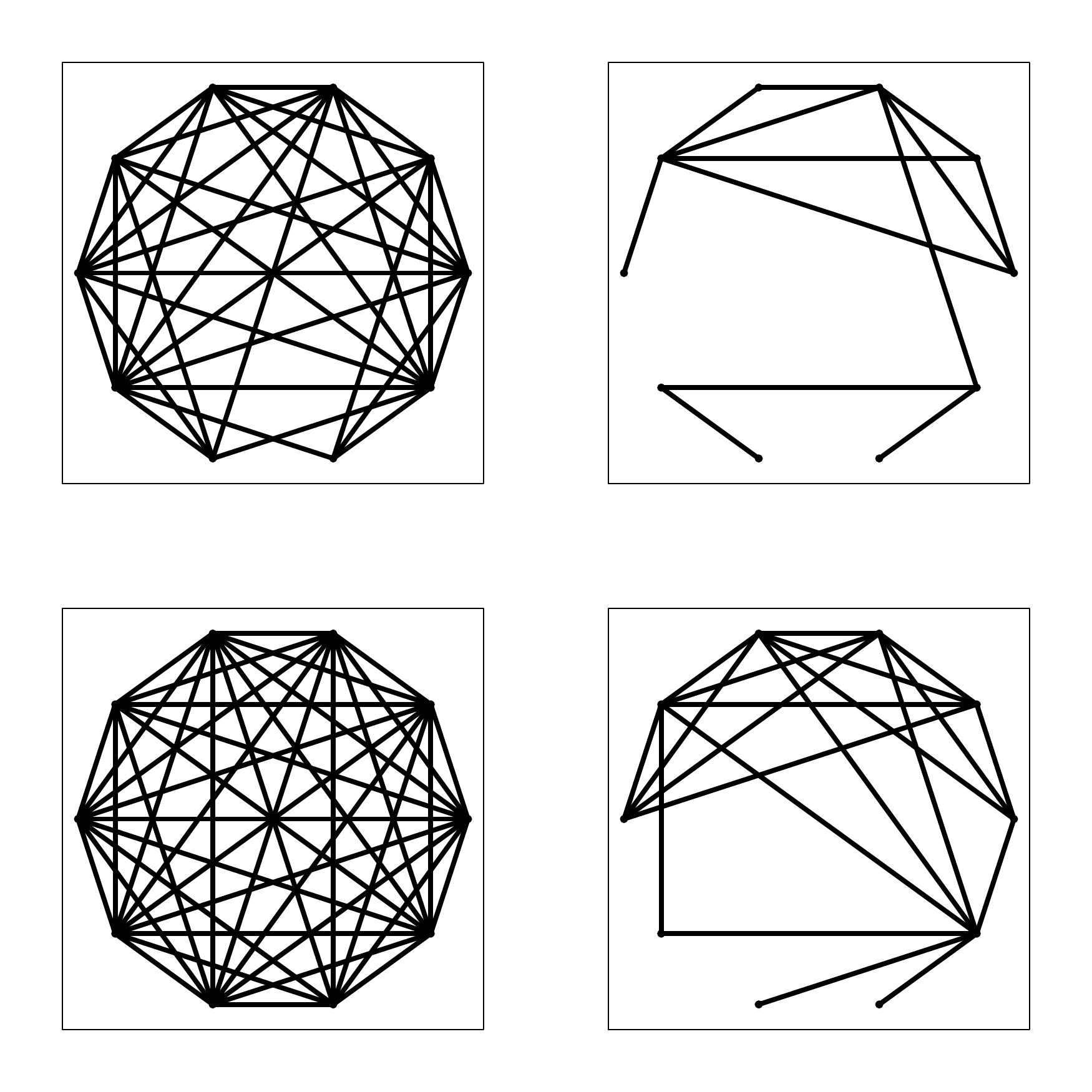}
\end{center}
\caption{Cluster graph for SEM model with
$\alpha =.9$, $a=.5$, $n=100$ and
dimensions 28, 32, 36, 40 and $L=10$.}
\label{fig::UndirectedSEMProto}
\end{figure}

\begin{figure}
\begin{center}
\includegraphics[scale=.5]{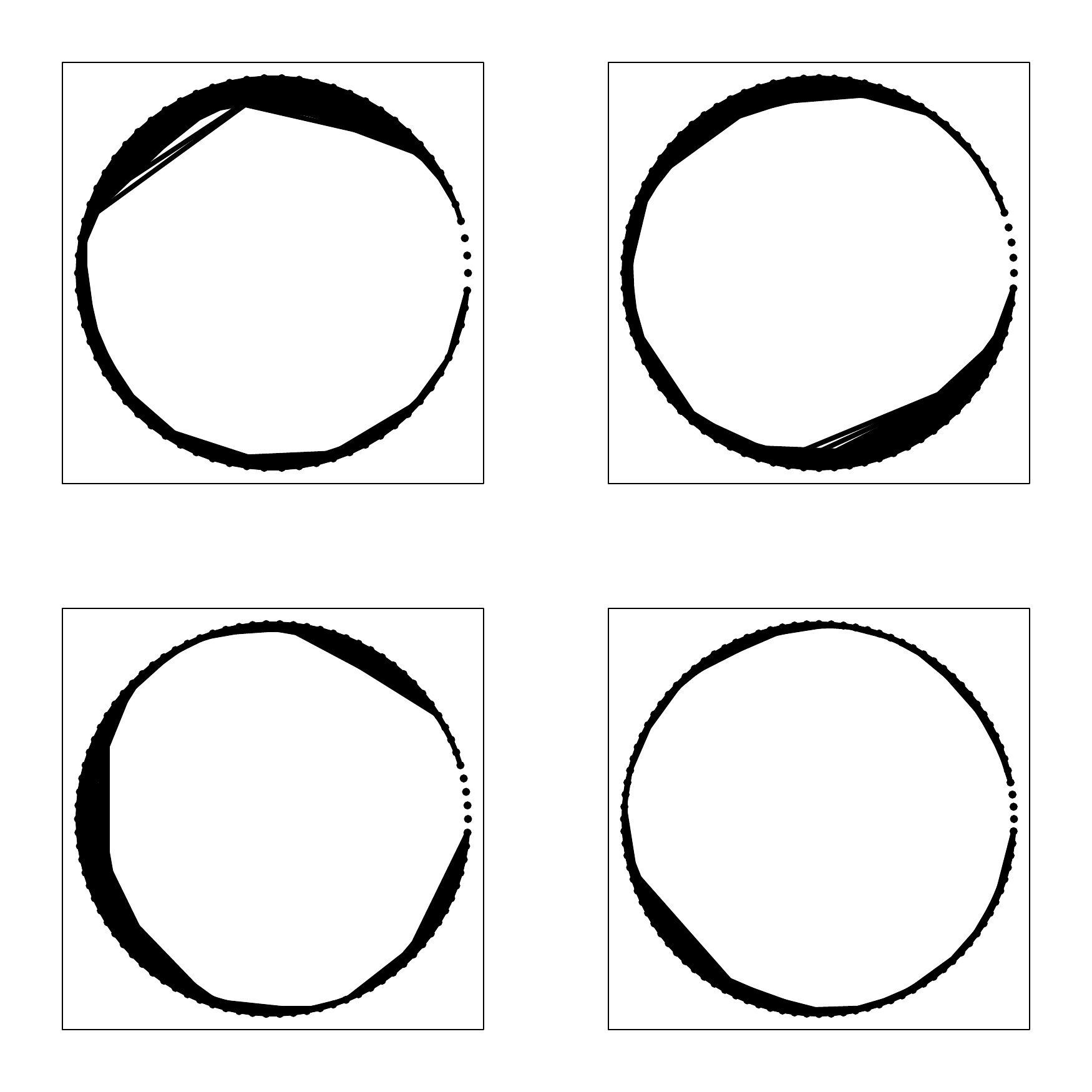}
\end{center}
\caption{Correlation graph for Markov model with
$\alpha =.9$, $a=.9$, $n=100$ and
dimensions 70, 80, 90, 100 and $L=20$.}
\label{fig::CorrelationMarkov}
\end{figure}

\begin{figure}
\begin{center}
\includegraphics[scale=.5]{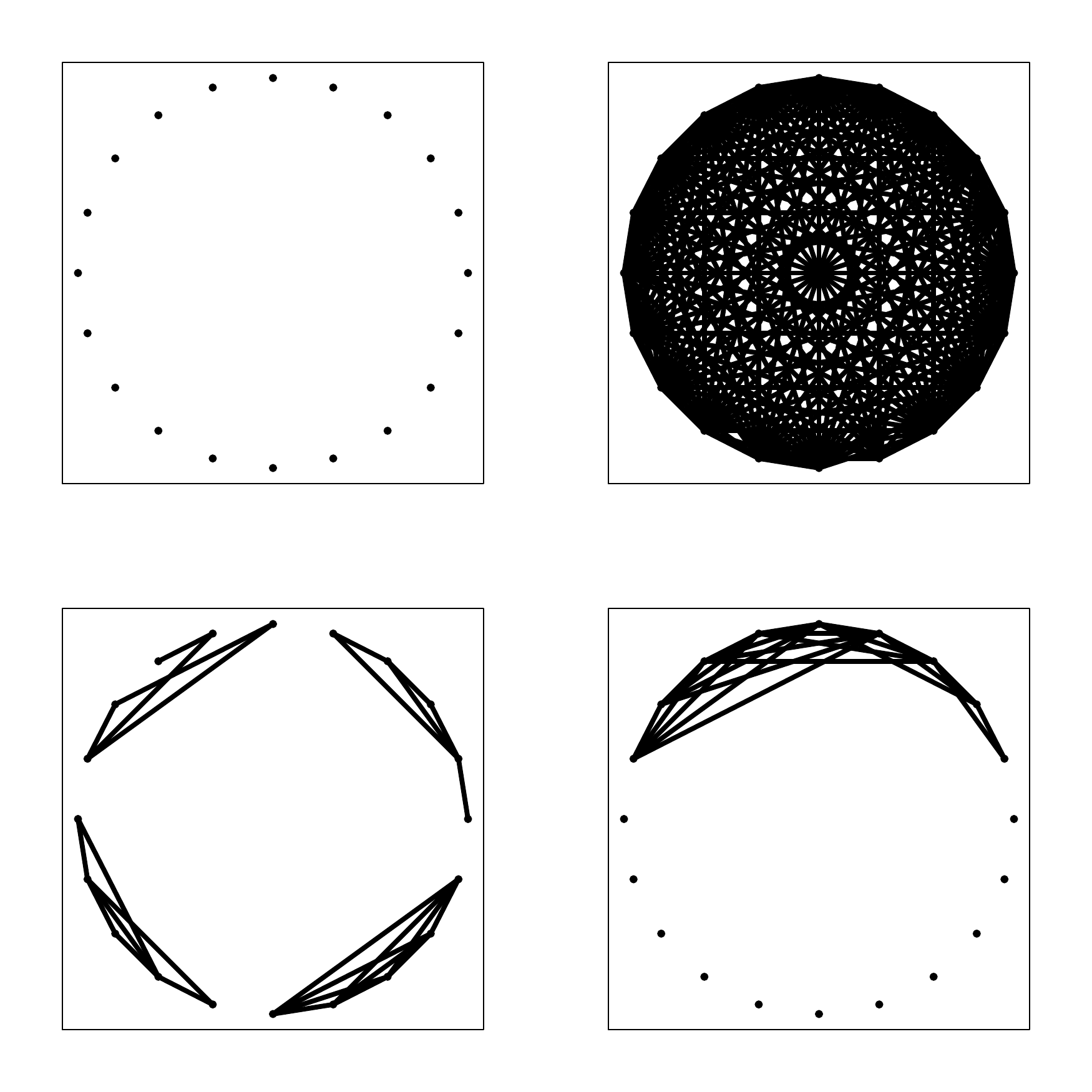}
\end{center}
\caption{Correlation Graphs, n=100, D=12.}
\label{fig::methods1}
\end{figure}

\begin{figure}
\begin{center}
\includegraphics[scale=.5]{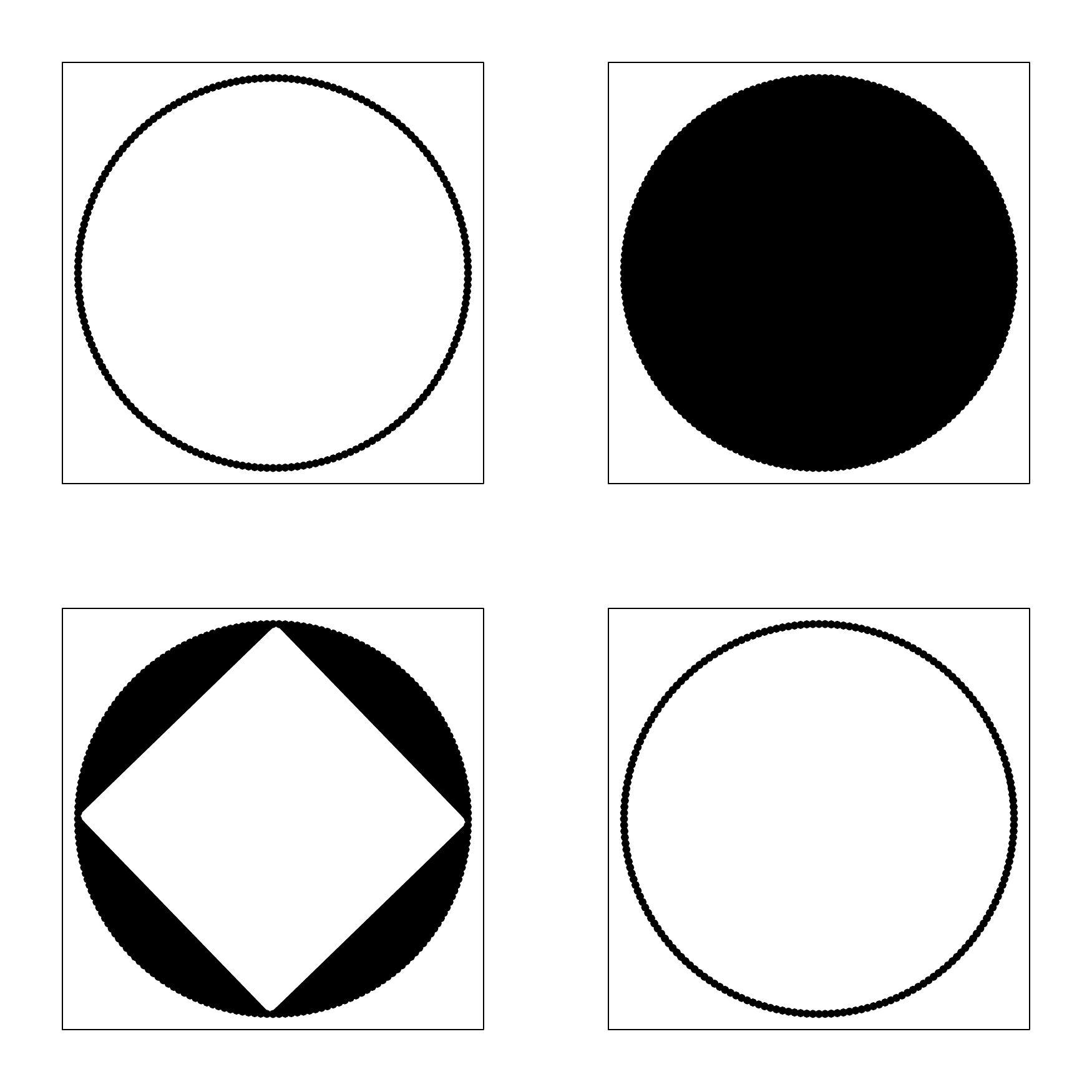}
\end{center}
\caption{Correlation Graphs, n=100, D=200.}
\label{fig::methods2}
\end{figure}

As mentioned earlier, the restricted partial correlation graph is so
computationally intensive that it is not yet practical.  We believe
the method is promising which is why we have included it in the paper
but at this point we do not have numerical experiments.

Finally, as a sanity check, we checked the coverage of the bootstrap
for two models: the null model (no edges) and the Markov model.  We
declare an error if there is even a single wrong edge.  Using $\alpha
= .10$ and $n=100$ we have the following error rates:

\begin{center}
\begin{tabular}{l|ll}
Model/Dimension   & D = 20 & D = 50 \\ \hline
Null              & .01    & .01\\
Markov            & .00    & .01\\
\end{tabular}
\end{center}

The error rates is well under $\alpha$.
Indeed, we see that the coverage is conservative
as we would expect.

\section{Conclusion}
\label{section::conclusion}

We have described methods for inferring graphs that use weak
assumptions and that have confidence guarantees.  Our methods are
atavistic: we use very traditional ideas that have been swept aside in
light of the newer sparsity-based approaches.  We do not mean in any
way to criticize sparsity-based methods which we find fascinating.
But our main message is that the older methods still have a role to
play especially if we want methods that use weaker assumptions.

There are several open problems that we will address in the future.
We briefly describe a few here.  First, we do not have any theory to
characterize how the original graph relates to the graph of the
dimension reduced problem.  It would be useful to have some general
theory which shows which features of the original graph are preserved.

Perhaps the most important extension is to go beyond linear measures
of dependence.  Following \cite{bergsma2011}, write
$$
Y = g(X) + \epsilon_Y\ \ \ {\rm and}\ \ \ 
Z = h(X) + \epsilon_Z
$$
and define the nonparametric partial correlation
$$
\theta_{YZ.X} =\frac{ \mathbb{E}(\epsilon_Y \epsilon_Z)} 
{\sqrt{\mathbb{E}(\epsilon_Y^2)\mathbb{E}(\epsilon_Z^2)}}.
$$
Let
$$
\hat\epsilon_{Y_i} = Y_i - \hat g(X_i)\ \ \ {\rm and}\ \ \ 
\hat\epsilon_{Z_i} = Y_i - \hat h(X_i).
$$
Let
$$
\hat\theta_{YZ.X} =
\frac{\sum_i \hat\epsilon_{Y_i} \hat\epsilon_{Z_i}}
{\sqrt{\sum_i \hat\epsilon_{Y_i}^2 \sum_i \hat\epsilon_{Z_i}^2}}.
$$
Bergsma shows that, for some $q_1,q_2>0$,
$$
\sqrt{n}(\hat\theta_{YZ.X}- \theta_{YZ.X}) =
\sqrt{n}(r_{YZ.X}- \theta_{YZ.X}) + O_P\left( n^{-{\rm min}(q_1,q_2)}\right)
$$
where
$$
r_{YZ.X} =
\frac{\sum_i \epsilon_{Y_i} \epsilon_{Z_i}}
{\sqrt{\sum_i \epsilon_{Y_i}^2 \sum_i \epsilon_{Z_i}^2}}
$$
and
$$
n^{q_1}(\hat g(x) - g(x)) = O_P(1),\ \ 
n^{q_2}(\hat h(x) - h(x)) = O_P(1).
$$
One can then extend the techniques in this paper
to get confidence measures.

Other problems for future development are: the development of
computationally efficient methods for computing the restricted partial
correlation graph and the extension of our theory to shrinkage graphs.

\section{Appendix: Alternative Delta Method}

If one is only interested in a single partial correlation,
then one can use
use a Taylor series together with the Berry-Esseen theorem.
We provide this analysis here.
At the end, we can turn this into 
a joint confidence set for all partial correlations using
the union bound but this leads to a larger error than our earlier analysis.
So the main interest of this section is single partial correlations.

Let us write $\theta_{jk} =
g_{jk}(\sigma)$ where $g_{jk}:\mathbb{R}^{D\times D}\to [-1,1]$.  Let
$\ell_{jk}$ and $H_{jk}$ denote the gradient and Hessian of $g_{jk}$.
Both $\ell_{jk}$ and $H_{jk}$ are bounded continuous functions as long
as $\Sigma$ is invertible.
The linearization of $\theta_{jk}$ is
\begin{equation}
\sqrt{n}(\hat\theta_{jk} - \theta_{jk}) =
\delta^T \ell_{jk} + \frac{R_{jk}}{\sqrt{n}}
\end{equation}
where $\ell_{jk} \equiv \ell_{jk}(\sigma)$ and the remainder term
$R_{jk}$ is
\begin{equation}
R_{jk} = \frac{1}{2} \delta^T H_{jk}(\tilde\sigma) \delta
\end{equation}
for some $\tilde\sigma$ between $\sigma$ and $s$.
We compute $\ell_{jk}$
and $H_{jk}$ 
explicitly in Section \ref{section::error}.

Let
$$
s_{jk}^2 = U(\sigma),\ \ \ \hat{s}_{jk}^2 = U(s)
$$
where
\begin{equation}
U_{jk}(\sigma) = \ell_{jk}(\sigma)^T T(\sigma) \ell_{jk}(\sigma).
\end{equation}
The asymptotic variance of
the linearized partial correlation $\delta^T \ell_{jk}$ is
$s_{jk}^2$ and its estimate is
$\hat{s}_{jk}^2$.

Define $B = \Bigl\{ a:\ ||a-\sigma|| \leq C\sqrt{D^2 \log
  n/n}\Bigr\}$.  It follows from Lemma \ref{lemma::ballbound} that,
for large enough $C$, $s\in B$ except on a set of probability at most
$1/n$.  Let
\begin{align*}
\xi_n    &= \sup_{a\in B}\max_{jk}||\ell_{jk}(a)||_1\\
\gamma_n &= \sup_{a\in B}\max_{jk} \sqrt{\frac{|\!|\!| H_{jk}(a) |\!|\!|}{s_{jk}(a)}}\\
\rho_n   &= \sup_{a\in B}\max_{jk} \frac{||Q'_{jk}(a)||_1}{s_{jk}}.
\end{align*}
Note that these constants are also functions of $D$.

We begin by approximating the distribution of a single partial
correlation.  Let
$$
T_{jk} = \frac{\sqrt{n}(\hat\theta_{jk}-\theta_{jk})}{s_{jk}}.
$$
We start by assuming that $s_{jk}^2 = \ell_{jk}(\sigma)^T T(\sigma) \ell_{jk}(\sigma)$
is known.

\begin{lemma}
We have
$$
\max_{j,k}\sup_z | P( T_{jk} \leq z) - \Phi(z)| \preceq \frac{1}{\sqrt{n}} +
 \frac{2\gamma_n}{\sqrt{n}} \log (n D^2).
$$
\end{lemma}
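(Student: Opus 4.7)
The plan is to linearize $T_{jk}$ via Taylor expansion, apply the classical (scalar) Berry-Esseen theorem to the linear piece, and control the quadratic Taylor remainder via the concentration bound in Lemma~\ref{lemma::ballbound} combined with Lemma~\ref{lemma::simple}. Since the statement concerns a single (though arbitrary) coordinate $(j,k)$, I do not need the high-dimensional machinery of Theorems~\ref{theorem::clt}--\ref{theorem::comparison}; the ordinary one-dimensional Berry-Esseen theorem applies coordinatewise and the max over $(j,k)$ costs nothing because $\gamma_n$ and $\xi_n$ are already suprema over pairs.

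First I would use the Taylor expansion provided in the appendix to write $T_{jk} = A_{jk} + B_{jk}$ with $A_{jk} = \delta^T \ell_{jk}/s_{jk}$ and $B_{jk} = R_{jk}/(\sqrt{n}\,s_{jk})$. Using the identity $s - \sigma = \overline{V} - q$ from (\ref{eq::qsigma}), $A_{jk}$ splits into the i.i.d.\ sum $(1/\sqrt{n})\sum_{i=1}^n \ell_{jk}^T V_i/s_{jk}$, which has mean zero, variance exactly $1$ (by definition of $s_{jk}^2 = \ell_{jk}^T T \ell_{jk}$), and bounded third absolute moment under (A1), plus a nuisance piece $-\sqrt{n}\,\ell_{jk}^T q/s_{jk}$. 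The classical scalar Berry-Esseen theorem then gives $\sup_z |P(\text{i.i.d.\ sum} \le z) - \Phi(z)| \preceq 1/\sqrt{n}$. By H\"older's inequality $|\ell_{jk}^T q| \leq \xi_n \|q\|_\infty$, and the tail bound $P(\|q\|_\infty > \epsilon) \le 4D^2 e^{-n\epsilon\zeta^2/2}$ in Lemma~\ref{lemma::ballbound} shows this nuisance piece is $O_P(\xi_n \log(nD^2)/\sqrt n)$, of smaller order and therefore absorbable through a separate invocation of Lemma~\ref{lemma::simple}.

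Next I would control the remainder $B_{jk}$ on the good event $\{s\in B\}$, which has probability $\geq 1 - 1/n$ by the choice of $B$. Bounding the quadratic form by $|R_{jk}| \leq \tfrac{1}{2}|\!|\!|H_{jk}(\tilde\sigma)|\!|\!| \cdot \|\delta\|_\infty^2$ and applying the definition of $\gamma_n$ gives $|B_{jk}| \preceq (\gamma_n/\sqrt n)\,\|\delta\|_\infty^2$ (up to the squared/linear power convention used in the appendix definition of $\gamma_n$). The sub-Gaussian tail bound $P(\|\delta\|_\infty > t) \leq 2D^2 e^{-\zeta^2 t^2/2}$ from Lemma~\ref{lemma::ballbound} then yields $\|\delta\|_\infty^2 \preceq \log(nD^2)/\zeta^2$ except on a set of probability $\preceq 1/n$. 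Choosing $\epsilon$ of order $(\gamma_n/\sqrt n)\log(nD^2)$ in Lemma~\ref{lemma::simple} makes $P(|B_{jk}| > \epsilon) \preceq 1/n$, so that assembling the pieces gives
\[
\sup_z|P(T_{jk}\le z) - \Phi(z)| \;\preceq\; \frac{1}{\sqrt n} + \frac{\gamma_n}{\sqrt n}\log(nD^2),
\]
which is the claim after taking maxima over $(j,k)$.

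The main obstacle is the quadratic structure of the Taylor remainder: because $R_{jk}$ is quadratic in $\delta$, the tail bound for $\|\delta\|_\infty$ enters squared, forcing a $\log(nD^2)$ inflation from the sub-Gaussian concentration. This is what obstructs recovering a clean $1/\sqrt n$ Berry-Esseen rate: the classical rate survives only when $\gamma_n\log(nD^2)$ is of constant order, which in turn constrains how dense $\Omega$ may be, as discussed in Section~\ref{section::error}. A secondary subtlety is keeping the $q$-contribution and the Hessian-remainder contribution on compatible ``good sets'' so that Lemma~\ref{lemma::simple} can be applied cleanly to each piece; this is handled by taking both events to have probability $\geq 1 - 1/n$ so that the error from excluding them is of lower order than the $\gamma_n\log(nD^2)/\sqrt n$ rate.
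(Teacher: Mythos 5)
Your proposal follows essentially the same route as the paper's own proof: Taylor linearization of $\theta_{jk}$, reduction via Lemma \ref{lemma::simple}, the classical scalar Berry--Esseen bound for the linear term $\delta^T\ell_{jk}/s_{jk}$, and control of the quadratic remainder through the sub-Gaussian bounds of Lemma \ref{lemma::ballbound} with the same choice $\epsilon \asymp (\gamma_n/\sqrt{n})\log(nD^2)$. The only difference is that you explicitly treat the centering term $\sqrt{n}\,\ell_{jk}^T q/s_{jk}$ coming from $s-\sigma=\overline{V}-q$, which the paper's proof silently drops by writing $\sqrt{n}\,\ell_{jk}^T(s-\sigma)$ as an i.i.d.\ average; this is if anything more careful, at the harmless cost of an extra $\xi_n$-dependent term of the same $\log(nD^2)/\sqrt{n}$ order.
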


\begin{proof}
We have
$$
T_{jk} = \frac{U}{s_{jk}} + \frac{R_{jk}}{s_{jk}\sqrt{n}}
$$
where $U = \sqrt{n} a^T (s-\sigma) = n^{-1}\sum_i V_i$ where $V_i =
{\rm vec}(Y_i Y_i^T) - \sigma$ and $a = \ell_{jk}$.  By Lemma
\ref{lemma::simple}, for every $\epsilon>0$,
$$
\sup_z | P( T_{jk} \leq z) - \Phi(z)| \leq
\sup_z \left|P\left(\frac{U}{s_{jk}} \leq z\right) - \Phi(z) \right| +
\epsilon + P\left(\left|\frac{R_{jk}}{s_{jk}\sqrt{n}}\right| > \epsilon\right).
$$
Note that ${\rm Var}(V_i) = s_{jk}^2$ and
$$
\mathbb{E}|V_i|^3 \leq C \sum_i | a_j|^3.
$$
Let $Z\sim N(0,1)$.
By the Berry-Esseen theorem,
$$
\sup_t \Biggl| \mathbb{P}\left( \frac{U_n}{s_{jk}} \leq t\right) - P(Z \leq t) \Biggr| \preceq
\frac{ \sum_j |a_j|^3}{\sqrt{n} (a^T T a)^{3/2}} \leq
\frac{ \sum_j |a_j|^3}{\sqrt{n} c_0^{3/2} ||a||^3} \leq \frac{1}{\sqrt{n}}
$$
since
$||a||_3 \leq ||a||_2$ and
$\frac{\sum_j |a_j|^3}{||a||^3}  = \frac{||a||_3^3}{||a||_2^3}$.
Now
\begin{align*}
\left|\frac{R_{jk}}{s_{jk}\sqrt{n}}\right| =
\frac{1}{2} \frac{\delta^T H_{jk}\delta}{s_{jk}\sqrt{n}}  \leq
\frac{ \gamma_n ||\delta||_{\rm max}^2}{\sqrt{n}}.
\end{align*}
From Lemma \ref{lemma::simple},
\begin{align*}
P\left(\left|\frac{R_{jk}}{s_{jk}\sqrt{n}}\right| > \epsilon\right) & \leq
P\left(\frac{\gamma_n ||\delta||_{\rm max}^2}{\sqrt{n}} > \epsilon\right)=
P( ||s-\sigma||_\infty > \frac{\sqrt{\epsilon}}{n^{1/4} \sqrt{\gamma}})\\
& \leq
D^2 e^{-n \epsilon/(\gamma\sqrt{n})}.
\end{align*}
Let $\epsilon = \frac{\gamma}{\sqrt{n}} \log (n D^2).$ Then $D^2 e^{-n
  \epsilon/(\gamma\sqrt{n})} \leq \epsilon.$ The result follows.
\end{proof}

Now let
$$
Z_{jk} = \frac{\sqrt{n}(\hat\theta_{jk}-\theta_{jk})}{\hat s_{jk}}
$$
where
$\hat s_{jk}^2 = \ell_{jk}(s)^T T(s) \ell_{jk}(s)$.

\begin{theorem}
$$
\max_{j,k}\sup_z\left|
P\left( \frac{\sqrt{n}(\hat\theta_{jk}-\theta_{jk})}{\hat s_{jk}} \leq z \right) - \Phi(z)\right| \preceq
\sqrt{\frac{\rho_n}{n}}\log(n D^2) + \frac{\gamma_n}{\sqrt{n}}\log(n D^2).
$$
\end{theorem}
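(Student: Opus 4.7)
The strategy is to pass from the Studentized statistic $Z_{jk}$ back to the oracle statistic $T_{jk}$ (for which the previous lemma already supplies a Berry–Esseen bound), showing that the plug-in error in the standard error is negligible. Concretely, I will write
\[
Z_{jk} \;=\; T_{jk}\,\frac{s_{jk}}{\hat s_{jk}} \;=\; T_{jk} + T_{jk}\,\eta, \qquad \eta \;:=\; \frac{s_{jk}}{\hat s_{jk}} - 1,
\]
and apply Lemma \ref{lemma::simple} with $A_n = T_{jk}$ and $B_n = T_{jk}\eta$. The previous lemma already bounds $\sup_z|P(A_n \le z) - \Phi(z)|$ by $(\gamma_n/\sqrt n)\log(nD^2)$ (up to absolute constants), so the task reduces to (i) producing a high-probability bound on $|\eta|$, and (ii) controlling $P(|B_n|>\epsilon)$ for a judicious choice of $\epsilon$.

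For (i) I would expand the std using a first-order Taylor expansion of $Q_{jk}(\cdot) = \sqrt{U_{jk}(\cdot)}$: there exists $\bar\sigma$ on the segment between $\sigma$ and $s$ with
\[
\hat s_{jk} - s_{jk} \;=\; Q_{jk}(s) - Q_{jk}(\sigma) \;=\; (s-\sigma)^\top Q'_{jk}(\bar\sigma).
\]
Since $s \in B$ except on a set of probability $\le 1/n$ (Lemma \ref{lemma::ballbound}), Hölder's inequality and the definition of $\rho_n$ yield
\[
\bigl|\hat s_{jk}/s_{jk} - 1\bigr| \;\le\; \rho_n\,\|s-\sigma\|_\infty
\quad\text{on } \{s\in B\}.
\]
The tail bound of Lemma \ref{lemma::ballbound} then gives $\|s-\sigma\|_\infty \le C\sqrt{\log(nD^2)/n}$ with probability at least $1-1/n^2$, hence $|\eta| \le \tilde\epsilon := C\rho_n\sqrt{\log(nD^2)/n}$ on an event of probability at least $1-2/n$.

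For (ii), on the intersection of the event from (i) with the truncation event $\{|T_{jk}| \le c\}$, we have $|B_n| \le c\,\tilde\epsilon$. The previous lemma combined with a Gaussian tail gives $P(|T_{jk}|>c) \le 2e^{-c^2/2} + (\gamma_n/\sqrt n)\log(nD^2)$, so choosing $c$ a slowly growing function of $n$ (so that $e^{-c^2/2}\lesssim 1/n$) yields a negligible truncation error. Setting $\epsilon = c\,\tilde\epsilon$, Lemma \ref{lemma::simple} delivers
\[
\sup_z\bigl|P(Z_{jk}\le z) - \Phi(z)\bigr| \;\le\; \sup_z\bigl|P(T_{jk}\le z)-\Phi(z)\bigr| + \epsilon + P(|B_n|>\epsilon),
\]
and combining with the previous lemma gives the claimed bound, with a lower-tail argument symmetric. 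Taking $\max_{j,k}$ is free because all of the bounds above are uniform in $(j,k)$ (through $\rho_n$, $\gamma_n$, and $\xi_n$).

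The principal obstacle is the balance between the truncation level $c$ for $|T_{jk}|$ and the plug-in perturbation $\tilde\epsilon$: pushing $c$ too small inflates the tail error via $e^{-c^2/2}$, while pushing it too large makes the product $c\,\tilde\epsilon$ dominate. Calibrating $c$ against $\rho_n$ to produce the $\sqrt{\rho_n/n}\,\log(nD^2)$ factor (rather than a naïve $\rho_n\log(nD^2)/\sqrt n$) is the delicate step; everything else is a routine assembly of Taylor expansion, the sub-Gaussian concentration of Lemma \ref{lemma::ballbound}, and the previous lemma.
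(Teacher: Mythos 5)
Your proposal is essentially the paper's own proof: the same decomposition $Z_{jk}=T_{jk}\,(s_{jk}/\hat s_{jk})$, the same Taylor--H\"older--$\rho_n$ control of the studentization error via concentration of $\|s-\sigma\|_\infty$, the same truncation of $T_{jk}$ at a slowly growing level (the paper uses $u/\epsilon=\log n$), and the same final assembly from the previous lemma plus the $1$-Lipschitzness of $\Phi$ (the paper just re-derives your Lemma \ref{lemma::simple} step inline). The ``delicate calibration'' you worry about does not actually occur in the paper: it simply sets $\epsilon=\sqrt{\rho_n/n}\,\log(nD^2)$ and verifies $D^2e^{-n\epsilon^2/(4\rho_n^2)}\le\epsilon$, which only goes through when $\rho_n\preceq\log(nD^2)$, and in that regime your ``na\"ive'' bound $\rho_n\sqrt{\log(nD^2)/n}$ is already $\preceq\sqrt{\rho_n/n}\,\log(nD^2)$, so your argument suffices as is (indeed both your proof and the paper's carry an extra logarithmic factor from the truncation level, e.g.\ the paper's $u=\epsilon\log n$, that the stated rate silently absorbs).
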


\begin{proof}
Let $E=\{ s_{jk}/\hat{s}_{jk} > 1+\epsilon\}$ and $F =\{ T_{jk} > u/\epsilon\}$ where 
$\epsilon = \sqrt{\rho_n/n}\log(n D^2)$ and $u = \epsilon \log(n)$.  
Note that $s_{jk} - \hat{s}_{jk} = U(\sigma) -U(s) = (\sigma-s)^T Q'$ where $Q'$ is the gradient of $Q$ evaluated
at some point between $s$ and $\sigma$.  Then, for $0 < \epsilon \leq 1$,
\begin{align*}
P(E^c) &= P\left( \frac{s_{jk}-\hat{s}_{jk}}{s_{jk}} > \frac{\epsilon}{1+\epsilon}\right)=
P\left( \frac{U(\sigma)-U(s)}{s_{jk}} > \frac{\epsilon}{1+\epsilon}\right)\\
&= P\left( \frac{ (\sigma-s)^T Q' }{s_{jk}} > \frac{\epsilon}{1+\epsilon}\right)\leq
P\left( \frac{||s-\sigma||_\infty ||Q'||_1}{s_{jk}}> \frac{\epsilon}{1+\epsilon}\right)\\
& \leq P\left( ||s-\sigma||_\infty \rho_n > \frac{\epsilon}{1+\epsilon}\right)=
P\left( ||s-\sigma||_\infty > \frac{\epsilon}{2\rho_n}\right)\\
& \leq D^2 e^{-n\epsilon^2/(4\rho_n^2)} \leq \epsilon.
\end{align*}
Now,
\begin{align*}
P\left( \frac{\sqrt{n}(\hat\theta_{jk}-\theta_{jk})}{\hat s_{jk}} \leq z \right) &- \Phi(z)  =
P\left(T_{jk}\left(\frac{s_{jk}}{\hat s_{jk}}\right) \leq z \right) - \Phi(z)\\
& \leq P\left(T_{jk} (1-\epsilon) \leq z \right) +P(E^c) - \Phi(z)\\
&= P\left(T_{jk} -T_{jk}\epsilon) \leq z \right) +P(E^c) - \Phi(z)\\
&\leq P\left(T_{jk}  \leq z+u \right) + P(F^c) + P(E^c) - \Phi(z)\\
&\leq P\left(T_{jk}  \leq z+u \right) -\Phi(z+u) + P(F^c) + P(E^c) +u\\
& \leq P\left(T_{jk}  \leq z+u \right) -\Phi(z+u) + P(F^c) + \epsilon +u.
\end{align*}
Now
\begin{align*}
P(F^c) &=P( T_{jk}> u/\epsilon) \leq
P(Z > u/\epsilon) + \frac{\gamma_n}{\sqrt{n}}\log(n D^2)\\
&=
P(Z > \log n) + 
\frac{\gamma_n}{\sqrt{n}}\log(n D^2)\\
& \preceq \frac{\gamma_n}{\sqrt{n}}\log(n D^2).
\end{align*}
So,
\begin{align*}
P\left( \frac{\sqrt{n}(\hat\theta_{jk}-\theta_{jk})}{\hat s_{jk}} \leq z \right) - \Phi(z)  &\leq
P\left(T_{jk}  \leq z+u \right) -\Phi(z+u)  + \epsilon +u+ \frac{1}{n} + \frac{\gamma_n}{\sqrt{n}}\log(n D^2)\\
& \preceq
\sqrt{\frac{\rho_n}{n}}\log(n D^2) + \frac{\gamma_n}{\sqrt{n}}\log(n D^2).
\end{align*}
Taking the supremum over $z$ gives an upper.  A similar lower bound
completes the proof.
\end{proof}

Now we turn to bounding
$P(\max_{jk}|Z_{jk}| > z)$.
We use the union bound.
So,
\begin{align*}
P(\max_{jk}|Z_{jk}| > z) &\leq
\sum_{jk} P(|Z_{jk}| > z) \\
&=
D^2 \Phi(z) + \sum_{jk} [P(|Z_{jk}| > z) -\Phi(z)]\\
& \leq
D^2 \Phi(z) +
D^2\left[\sqrt{\frac{\rho_n}{n}}\log(n D^2) + \frac{\gamma_n}{\sqrt{n}}\log(n D^2)\right].
\end{align*}
Setting $z= -\Phi(\alpha/D^2)$ we have that
\begin{align*}
P(\max_{jk}|Z_{jk}| > z) & \leq 
\alpha+
D^2\left[\sqrt{\frac{\rho_n}{n}}\log(n D^2) + \frac{\gamma_n}{\sqrt{n}}\log(n D^2)\right].
\end{align*}

\begin{corollary}
Let $z\equiv z_{\alpha/D^2}$ and let
$$
R =\bigotimes_{j,k} \Bigl[\hat\theta_{jk} - \frac{z \hat s_{jk}}{\sqrt{n}},\ \hat\theta_{jk} + \frac{z \hat s_{jk}}{\sqrt{n}}\Bigr].
$$
Then
$$
P(\theta\in R) = 1-\alpha + 
D^2\left[\sqrt{\frac{\rho_n}{n}}\log(n D^2) + \frac{\gamma_n}{\sqrt{n}}\log(n D^2)\right].
$$
\end{corollary}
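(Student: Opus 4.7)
This corollary is essentially a repackaging of the union-bound calculation displayed just before it, restated as coverage of the Cartesian-product rectangle $R$. The plan is to unfold $\{\theta \in R\}$ as a simultaneous event, convert it to a tail of $\max_{j,k}|Z_{jk}|$, and then invoke the bound
$$
P\bigl(\max_{j,k}|Z_{jk}| > z\bigr) \leq \alpha + D^2\left[\sqrt{\rho_n/n}\,\log(nD^2) + (\gamma_n/\sqrt n)\log(nD^2)\right]
$$
already derived from the per-coordinate Berry-Esseen bound of the preceding theorem.

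First I would observe the elementary equivalence, true by the definitions of $R$ and $z = z_{\alpha/D^2}$:
$$
\{\theta\in R\} \;=\; \bigcap_{j,k}\Bigl\{\,|\hat\theta_{jk}-\theta_{jk}|\leq z\,\hat s_{jk}/\sqrt n\,\Bigr\} \;=\; \Bigl\{\,\max_{j,k} |Z_{jk}|\leq z\,\Bigr\},
$$
where $Z_{jk}=\sqrt n(\hat\theta_{jk}-\theta_{jk})/\hat s_{jk}$. Taking complements reduces the coverage statement to $P(\theta\notin R)=P(\max_{j,k}|Z_{jk}|>z)$. I would then apply the union bound $P(\max_{j,k}|Z_{jk}|>z)\leq \sum_{j,k}P(|Z_{jk}|>z)$, and for each summand replace $P(|Z_{jk}|>z)$ by $2(1-\Phi(z))$ at the cost of the Berry-Esseen error $\sqrt{\rho_n/n}\log(nD^2)+(\gamma_n/\sqrt n)\log(nD^2)$ supplied by the preceding theorem. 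By the definition of $z_{\alpha/D^2}$, the aggregate Gaussian contribution $D^2\cdot 2(1-\Phi(z))$ equals $\alpha$ (up to the standard one-sided/two-sided convention folded into the choice of quantile), and the accumulated errors across the $D^2$ pairs yield the bound in the statement.

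There is no substantive obstacle here: the only arithmetic work is keeping careful track of the blow-up factor $D^2$ that multiplies the per-coordinate Berry-Esseen remainder. That factor is precisely what one pays for turning marginal into simultaneous inference via a union bound, and is exactly why, as advertised in the opening paragraph of this appendix, the alternative delta-method route is inferior to the high-dimensional CLT analysis of Section \ref{section::moderate} whenever $D$ is allowed to grow — the earlier approach replaced the crude $D^2$ factor by a logarithmic $\sqrt{\log D}$-type term via the anti-concentration inequality of Lemma \ref{lemma::anticoncentration}.
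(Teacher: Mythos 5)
Your proposal matches the paper's own argument: the corollary is stated there without a separate proof precisely because it is the union-bound display immediately preceding it, and you reconstruct that computation — rewrite $\{\theta\in R\}$ as $\{\max_{j,k}|Z_{jk}|\le z\}$, apply the union bound and the per-coordinate Berry--Esseen bound of the preceding theorem, and use the definition of $z_{\alpha/D^2}$ so the Gaussian part contributes $\alpha$. Your remark about the one-sided versus two-sided quantile convention is the same looseness present in the paper's own display (which writes $D^2\Phi(z)$ and sets $z=-\Phi(\alpha/D^2)$), so nothing substantive is missing.
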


Note the presence of the $D^2$ term.
This term is avoided in the analysis in Section 
\ref{section::moderate}.

\bibliography{paper}

\end{document}